\documentclass[12pt]{amsart}


\usepackage{a4wide}
\usepackage{color}
\usepackage{amsmath,amsfonts,amsthm,amssymb} 
\usepackage{graphicx}
\usepackage{comment}
\usepackage{hyperref}
\usepackage{cite}
\usepackage{accents}
\usepackage{enumerate}
\usepackage[us,12hr]{datetime}

\usepackage{longtable}



\newtheorem{theorem}{Theorem}
\newtheorem{lemma}[theorem]{Lemma}
\newtheorem{corollary}[theorem]{Corollary}
\newtheorem{proposition}[theorem]{Proposition}
\theoremstyle{definition}
\newtheorem{remark}[theorem]{Remark}

\newtheorem{definition}[theorem]{Definition}
\newtheorem{claim}[theorem]{Claim}

\numberwithin{equation}{section} \numberwithin{theorem}{section}

{\end{list}}


\def\XXint#1#2#3{{\setbox0=\hbox{$#1{#2#3}{\int}$}
     \vcenter{\hbox{$#2#3$}}\kern-.5\wd0}}

\newcommand{\mc}[1]{\mathcal{#1}}
\newcommand{\mbb}[1]{\mathbb{#1}}

\newcommand{\M}{\mathcal{M}}

\newcommand{\R}{\mbb{R}}

\newcommand{\ra}{\right\rangle}
\newcommand{\la}{\left\langle}
\newcommand{\e}{\varepsilon}

\newcommand{\pd}{\partial}
\newcommand{\cd}{\nabla}

\newcommand{\lb}{\left(}
\newcommand{\rb}{\right)}
\newcommand{\lsb}{\left[}
\newcommand{\rsb}{\right]}

\newcommand{\inner}[2]{\left\langle#1,#2\right\rangle} 

\newcommand{\ho}{\accentset{\circ}{A}}


\providecommand{\fff}{{g}}

\providecommand{\sff}{{\mathbf{A}}{}} 


\providecommand{\Wein}{{\mathbf{L}}{}}

\providecommand{\WeinN}{{L}}

\providecommand{\torsionhat}{\hat{\mathbf{T}}{}}
\providecommand{\torsionN}{{\mathrm{T}}{}}

\providecommand{\mn}{{\mathbf{H}}{}}

\providecommand{\mnN}{H} 

\providecommand{\nor}{{\mathbf{N}}}

\providecommand{\sffN}{{h}} 

\providecommand{\Ahat}{{\hat{\sff}}}

\providecommand{\NMN}{\nor M}
\providecommand{\fffN}{\fff^\nor} 
\providecommand{\cdN}{\cd^{\nor}} 

\providecommand{\NMhat}{\widehat{NM}}
\providecommand{\fffhat}{\hat\fff} 
\providecommand{\cdhat}{\hat\cd} 
\providecommand{\Deltahat}{\hat\Delta} 

\providecommand{\abs}[1]{\lvert#1\rvert}

\DeclareMathOperator{\spt}{spt}

\DeclareMathOperator{\Rm}{Rm}

\DeclareMathOperator{\tr}{tr}

\DeclareMathOperator{\dvg}{div}


\protected\def\vts{%
  \ifmmode
    \mskip0.5\thinmuskip
  \else
    \ifhmode
      \kern0.08334em
    \fi
  \fi
}

\def\labelitemi{--}
\def\ba #1\ea {\begin{align} #1\end{align}}
\def\bann #1\eann {\begin{align*} #1\end{align*}}
\def\ben #1\een {\begin{enumerate} #1\end{enumerate}}
\def\bi #1\ei {\begin{itemize}\renewcommand\labelitemi{--} #1\end{itemize}}

\setcounter{tocdepth}{1}

\address{Department of Mathematics, University of Tennessee Knoxville, Knoxville TN, USA, 37996-1320}
\address{School of Mathematical and Physical Sciences, The University of Newcastle, Newcastle, NSW, Australia, 2308}
\email{mlangford@utk.edu, mathew.langford@newcastle.edu.au}

\address{School of Mathematical Sciences, Queen Mary University of London, Mile End Road, London, UK, E1 4NS}
\email{h.nguyen@qmul.ac.uk}

\address{Eberhard Karls Universit\"at T\"ubingen, Fachbereich Mathematik, Auf der Morgenstelle 10, 72076 T\"{u}bingen, Germany}
\email{stephen.lynch@math.uni-tuebingen.de}

\begin{document}
\title[Quadratically pinched submanifolds]{Quadratically pinched submanifolds of the sphere via mean curvature flow with surgery}
\author{Mat Langford}
\author{Stephen Lynch}
\author{Huy The Nguyen}
\subjclass[2000]{Primary 53C44}
\begin{abstract} 
We study mean curvature flow of $n$-dimensional submanifolds of $S_K^{n+\ell}$, the round $(n+\ell)$-sphere of sectional curvature $K>0$, under the quadratic curvature pinching condition $|\sff|^{2} < \frac{1}{n-2}\vert\mn\vert^{2} + 4K$ when $n\ge 8$, $|\sff|^{2} < \frac{4}{3n}\vert\mn\vert^{2}+\frac{n}{2}K$ when $n=7$, and $\vert \sff\vert^2<\frac{3(n+1)}{2n(n+2)}\vert\mn\vert^2+\frac{2n(n-1)}{3(n+1)}K$ when $n=5$ or $6$. This condition is related to a theorem of Li and Li [Arch. Math., 58:582--594, 1992] which states that the only $n$-dimensional minimal submanifolds of $S_K^{n+\ell}$ satisfying $\vert\sff\vert^2<\frac{2n}{3}K$ are the totally geodesic $n$-spheres. 
We prove the existence of a suitable mean curvature flow with surgeries starting from initial data satisfying the pinching condition. 
As a result, we conclude that any smoothly, properly immersed submanifold of $S_K^{n+1}$ satisfying the pinching condition is diffeomorphic either to the sphere $S^n$ or to the connected sum of a finite number of handles $S^1\times S^{n-1}$. The results are sharp when $n\ge 8$ due to hypersurface counterexamples.
\end{abstract}

\maketitle

\tableofcontents

\section{Introduction}


A famous theorem of Simons \cite{Si68} states that any minimal hypersurface of the round sphere $S^{n+1}$ with squared second fundamental form $\vert\sff\vert^2$ less than $n$ is necessarily a hyperequator. Simons' methods have been generalized in various directions \cite{AlencarDoCarmo,BakerNguyen17,ChengNakagawa,ChernDoCarmoKobayashi,LiLi,Okumura,Santos}, in particular to higher codimension minimal immersions. In this setting, the algebraic structure of Simons' equation becomes much more complicated, primarily due to the possibility of a non-trivially curved normal bundle. Nonetheless, Li--Li \cite{LiLi}, building on work of Chern--do Carmo--Kobayashi \cite{ChernDoCarmoKobayashi}, were able to show that a minimal immersion in $S^{n+\ell}$ satisfying $\vert\sff\vert^2<\frac{2n}{3}$ is necessarily totally geodesic. 

Such results can be improved upon using geometric flows \cite{An02,BakerThesis,Hu87,LangfordNguyen}. Building on work of Andrews--Baker \cite{AnBa10} and Baker \cite{BakerThesis}, Baker--Nguyen \cite{BakerNguyen2ElectricBoogaloo} showed that, under mean curvature flow, $n$-dimensional submanifolds of the sphere $S_K^{n+\ell}$ of sectional curvature\footnote{We find it convenient to work without normalizing the curvature $K$, as it serves as a natural scale parameter.} $K$ satisfying the quadratic curvature pinching condition
\begin{equation}\label{eq:AndrewsBaker pinching}
\vert\sff\vert^2<
\begin{cases}
\frac{1}{n-1}\vert\mn\vert^2+2K&\text{if}\;\; n\geq 4 \vts , \smallskip \\
\frac{4}{3n}\vert\mn\vert^2+\frac{n}{2}K&\text{if}\;\; n=2,3\vts,
\end{cases}
\end{equation}
where $\mn$ is the mean curvature vector, converge (preserving the inequality) either to a ``round'' point in finite time or to a totally geodesic subsphere in infinite time. In case $n\geq 4$, this behaviour is sharp in the sense that, for each $\varepsilon>0$, the Clifford embedding
\[
\mc M^{1,n-1}_\varepsilon\doteqdot \left\{(x,y)\in \R^2\times \R^n:\vert x\vert^2=\frac{1}{1+\varepsilon^2}\;\;\text{and}\;\;\vert y\vert^2=\frac{\varepsilon^2}{1+\varepsilon^2}\;\;\right\}
\]
of $S^1\times S^{n-1}$ into $S^{n+1}$ satisfies $\vert \sff\vert^2-\frac{1}{n-1}\vert\mn\vert^2-2=\frac{n-2}{n-1}\varepsilon^2$. 

Baker--Nguyen\cite{BakerNguyen2ElectricBoogaloo} (see also  \cite{BakerNguyen17}) refined this result in the context of surfaces in $S^4$ by including the curvature of the normal bundle in the pinching condition. Their pinching condition is less restrictive than \eqref{eq:AndrewsBaker pinching} (with $n=\ell=2$), and is almost sharp in that the Veronese surface, a minimal embedding of the projective plane into $S^4$, lies close to its boundary (they conjecture that the Veronese surface represents the sharp condition).

We will develop these results further by allowing a weaker curvature pinching condition. Namely, we study, for $n\ge 5$ and\footnote{Our arguments also apply in the codimension one case, $\ell=1$; however in this case the results obtained are weaker than known results \cite{LangfordNguyen}.} $\ell\ge 2$, $n$-dimensional submanifolds of $S_K^{n+\ell}$ satisfying the quadratic pinching condition
\begin{equation}\label{eq:strict quadratic pinching}
\vert\sff\vert^2<
\begin{cases}
\frac{1}{n-2}\vert\mn\vert^2+4K &\text{if}\;\; n\geq 8 \vts , \smallskip \\
\frac{4}{3n}\vert\mn\vert^2+\frac{n}{2}K&\text{if}\;\; n=7 \smallskip \\
\frac{3(n+1)}{2n(n+2)}\vert\mn\vert^2+\frac{2n(n-1)}{3(n+1)}K&\text{if}\;\; n=5,6\vts.
\end{cases}
\end{equation}
By constructing an appropriate mean curvature flow-with-surgeries, we are able to prove the following topological classification of submanifolds satisfying \eqref{eq:strict quadratic pinching}.

\begin{theorem}\label{thm:main theorem}
Every properly isometrically immersed $n$-dimensional submanifold $X:M\to S^{n+\ell}_K$ of $S^{n+\ell}_K$, $n\ge 5$, satisfying \eqref{eq:strict quadratic pinching} is diffeomorphic either to $S^n$ or to a connected sum of finitely many copies of $S^1\times S^{n-1}$. 
\end{theorem}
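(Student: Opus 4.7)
The overall strategy is to construct a mean curvature flow with surgery starting from the given immersion $X:M\to S^{n+\ell}_K$ and then read off the diffeomorphism type of $M$ from the history of the flow. In spirit this follows the Huisken--Sinestrari program for two-convex hypersurfaces, but each of the required geometric estimates must be established for submanifolds of arbitrary codimension satisfying the weaker pinching \eqref{eq:strict quadratic pinching}, and it is here that the bulk of the work lies.

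First, I would run smooth mean curvature flow and prove that the pinching \eqref{eq:strict quadratic pinching} is preserved up to the first singular time, via a maximum principle applied to an auxiliary function of the form $\vert\sff\vert^2-c_n\vert\mn\vert^2-d_nK$. This requires a careful treatment of the Simons-type reaction terms in higher codimension, using the pinching itself to absorb them, extending the Baker--Nguyen argument for the stronger regime \eqref{eq:AndrewsBaker pinching}. I would then derive the standard package of auxiliary estimates: a \emph{codimension estimate} showing that, in high-curvature regions, the part of $\sff$ orthogonal to the $\mn$-direction is negligible compared to $\vert\mn\vert$, so that the submanifold is asymptotically hypersurface-like; a \emph{gradient estimate} of the form $\vert\cd\sff\vert^2\leq c\vert\sff\vert^4+CK^2$; and a \emph{convexity estimate} showing that, at points where $\vert\mn\vert$ is large, any effective principal curvature in the $\mn$-direction is nonnegative.

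The main geometric input is then a \emph{cylindrical estimate}: at points where $\vert\sff\vert^2$ is large but the pinching gap in \eqref{eq:strict quadratic pinching} is small, parabolic rescaling brings the submanifold close in $C^k$ to a shrinking round cylinder $S^{n-1}\times\R$. Combined with the convexity and gradient estimates, this supplies, once $\max\vert\sff\vert$ exceeds a prescribed surgery scale, a covering of the high-curvature region by neighbourhoods each of which is either an almost geodesic $n$-disc (a convex cap) or a long neck diffeomorphic to $S^{n-1}\times[0,1]$; these are the loci on which surgery will be performed.

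Finally, I would implement the surgery procedure itself: replace each high-curvature neck by two standard caps, discard any component recognisable as diffeomorphic to $S^n$ or $S^{n-1}\times S^1$, and continue the flow. A standard argument shows that \eqref{eq:strict quadratic pinching} is recovered after each surgery provided the caps are inserted at a sufficiently large curvature scale, while a volume-type monotonicity bound ensures that only finitely many surgeries occur before total extinction. Each discarded piece is diffeomorphic to $S^n$ or to $S^{n-1}\times D^1$, so reversing the surgeries identifies $M$ with the connected sum of finitely many $S^n$'s and $S^{n-1}\times S^1$ handles, proving the theorem. The hardest step, compared to the hypersurface case, will be the cylindrical estimate: although the codimension estimate formally reduces the blow-up analysis to the hypersurface class, one must still rule out non-cylindrical ancient solutions under the \emph{borderline} pinching $\vert\sff\vert^2<\frac{1}{n-2}\vert\mn\vert^2+4K$, which is sharp due to the Clifford examples and sits exactly at the threshold where round cylinders are the only non-trivial model.
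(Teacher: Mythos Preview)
Your surgery programme is correct in outline and matches the paper's. The substantive difference is how you obtain the cylindrical estimate. You propose a blow-up argument culminating in a classification of ancient solutions at the borderline pinching; the paper instead proves the cylindrical estimate \emph{directly} by Huisken--Stampacchia iteration, the key new input being a Poincar\'e-type inequality (Proposition~\ref{prop:Poincare}) which takes the codimension estimate as an explicit hypothesis. The resulting logical order is: codimension estimate $\Rightarrow$ Poincar\'e inequality $\Rightarrow$ cylindrical estimate $\Rightarrow$ gradient and Hessian estimates $\Rightarrow$ neck detection; the only blow-up in the paper occurs at that last step, by which time the full package of pointwise estimates is already available. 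There is also no separate ``convexity estimate'': the cylindrical estimate $|\sff|^2-\tfrac{1}{n-1}|\mn|^2\le\eta|\mn|^2+C_\eta K$ itself forces near-nonnegativity of the $\nor$-direction principal curvatures at high curvature.

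Your blow-up route to the cylindrical estimate has a genuine gap. After rescaling, any limit lives in $\R^{n+\ell}$ and satisfies only the \emph{non-strict} bound $|\sff|^2\le\tfrac{1}{n-2}|\mn|^2$, which is attained by $S^{n-2}\times\R^2$ as well as by $S^{n-1}\times\R$. Distinguishing these two models is exactly the content of the cylindrical estimate, so without an independent classification of high-codimension ancient solutions at this sharp threshold (none is available), the argument is circular. The paper's neck-detection blow-up succeeds precisely because the cylindrical estimate has already improved the pinching from $\tfrac{1}{n-2}$ to $\tfrac{1}{n-1}$, leaving $S^{n-1}\times\R$ as the only model. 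You also understate two difficulties the paper flags in the codimension estimate: under \eqref{eq:strict quadratic pinching} the mean curvature may vanish (so $\Ahat$ is not even defined there and one must argue separately), and the ambient curvature contributes bad terms; handling these is what makes Proposition~\ref{prop:codimension estimate} more delicate than Naff's Euclidean original.
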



This theorem is sharp when $n\ge 8$ in the sense that, for each $\varepsilon>0$, the Clifford embedding
\[
\mc M^{1,n-2}_\varepsilon\doteqdot \left\{(x,y)\in \R^3\times \R^{n-1}:\vert x\vert^2=\frac{1}{1+\varepsilon^2}\;\;\text{and}\;\;\vert y\vert^2=\frac{\varepsilon^2}{1+\varepsilon^2}\;\;\right\}
\]
of $S^2\times S^{n-2}$ into $S^{n+1}$ satisfies $\vert \sff\vert^2-\frac{1}{n-2}\vert\mn\vert^2-4=2\frac{n-4}{n-2}\varepsilon^2$.

The first step in proving Theorem \ref{thm:main theorem} is to prove that the pinching condition is preserved. This is achieved by a fairly straightforward application of the maximum principle (cf. \cite[Lemma 3.1]{BakerNguyen2ElectricBoogaloo}). We then show that the conormal component of the second fundamental form is small compared to the mean curvature when the latter is large. 
Such a ``codimension estimate'' was first established by Naff \cite{Naff} for mean curvature flow in Euclidean spaces. His argument relies on the maximum principle but requires very careful accounting of first order terms. Our proof is inspired by Naff's but requires some effort to overcome the bad ambient curvature terms as well as the possible presence of points where the mean curvature is zero (at which the conormal subspace is not even defined). The codimension estimate is also a crucial ingredient in our ``cylindrical estimate'', in that it is needed to establish a suitable ``Poincar\'e-type inequality'' (cf. \cite[Proposition 3.2]{LyNgConvexity}), which is the key ingredient in a Huisken--Stampacchia iteration argument. The cylindrical estimate allows us to obtain pointwise estimates for the gradient and Hessian of the second fundamental form using the maximum principle. These estimates imply that the flow becomes either uniformly convex or quantitatively cylindrical with respect to a codimension one subspace in regions of high curvature. Once they are in place, we are able to apply the surgery apparatus developed by Huisken--Sinestrari \cite{HuSi09}, as extended by Nguyen \cite{Nguyen2020} to the high codimension setting. 

\subsection*{Acknowledgements}  
M.~Langford was supported by the Australian Research Council grant DE200101834. H.~T.~Nguyen was supported by the EPSRC grant EP/S012907/1.

\section{Preliminaries}

We begin by recalling the fundamental machinery regarding submanifolds of the sphere, and their evolution by mean curvature.

\subsection{Spaceforms}

Let $N_K=(N,g)$ be a complete Riemannian manifold of constant sectional curvature $K\in\R$, equipped with its Levi--Civita connection $D$. Recall that the curvature tensor $\Rm$ of $N$ is given by
\[
\Rm(u,v)w=K(g(u,w)v-g(v,w)u)\vts,
\]
where we use the convention
\[
\Rm^\cd(U,V)W\doteqdot \cd_{[U,V]}W-\cd_U(\cd_VW)+\cd_V(\cd_UW)
\]
for the curvature operator $\Rm^\cd$ of a connection $\cd$. 

\subsection{Submanifolds}

We follow \cite{AnBa10}. Consider an immersed submanifold $X:M^n\to N^{n+\ell}_K$ of a spaceform $N_K^{n+\ell}$. The \emph{normal bundle} $NM$ of $X$ is determined by the orthogonal decomposition
\[
X^\ast TN=dX(TM)\oplus^\perp NM\vts,
\]
where $X^\ast TN$ is the pullback of $TN$ and $dX:TM\to TN$ is the derivative of $X$. The pullback $X^\ast\!g$ of $g$ induces positive definite bilinear forms $\fff^\top$ on $TM$ (the \emph{first fundamental form}) and $\fff^\perp$ on $NM$, respectively. The pullback connection ${}^X\!D$ on $X^\ast TN$ induces connections $\cd^\top$ on $TM$  and $\cd^\perp$ on $NM$ via
\[
\cd^\top_uV\doteqdot ({}^X\!D_u[dX(V)])^\top \;\; \text{and}\;\; \cd^\perp_uN\doteqdot ({}^X\!D_uN)^\perp,
\]
where $\cdot^\top:X^\ast TN\to TM$ and $\cdot^\perp:X^\ast TN\to NM$ denote the tangential and normal (orthogonal) projections, respectively. The connection $\cd^\top$ induced on $TM$ is the Levi-Civita connection of $\fff^\top$ and $\cd^\perp$ is compatible with $\fff^\perp$.

The \emph{second fundamental form} $\sff\in\Gamma(T^\ast M\otimes T^\ast M\otimes NM)$ and the \emph{Weingarten tensor} $\Wein \in\Gamma(T^\ast M\otimes NM\otimes TM)$ are defined by the \emph{Gauss--Weingarten equations}
\begin{equation}\label{eq:first Weingarten general}
{}^X\!D_{u}[dX(V)]=dX(\cd^\top_uV)+\sff(u,v)
\end{equation}
and
\begin{equation}\label{eq:second Weingarten general}
{}^X\!D_vN
=\cd^\perp_vN-dX(\Wein(v,\nu))\vts,
\end{equation}
respectively, so that
\[
\fff^\perp(\sff(u,v),\nu)=\fff^\top(\Wein(u,\nu),v)\vts.
\]
These equations hold at any point $p\in M$ and for any tangent vectors $u,v\in T_pM$, normal vectors $\nu\in N_pM$, and extensions $V\in \Gamma(TM)$ of $v$ and $N\in\Gamma(NM)$ of $\nu$. 

\textbf{We will continue to use these conventions below; that is, given $p\in M$, the lower case latin letters $u,v,w,z$ will denote tangent vectors at $p$, the lower case greek letters $\mu,\nu$ will denote normal vectors at $p$, and corresponding upper case letters will denote corresponding extension fields, all of which may be arbitrarily chosen.}

Observe that
\begin{equation}\label{eq:Hessian of X is II}
(\cd_udX)(v)=\sff(u,v)\vts,
\end{equation}
where $\cd$ denotes the connection induced on the bundle $T^\ast M\otimes X^\ast TN$ by 
$\cd^\top$ and ${}^X\!D$. 
The covariant derivatives of the tangential and normal projections $\cdot^\top$ and $\cdot^\perp$ are given by
\begin{equation}\label{eq:derivative of top}
(\cd_u ( \vts \cdot^\top ) )(\phi)=\Wein(u,\phi^\perp)
\end{equation}
and
\begin{equation}\label{eq:derivative of perp}
(\cd_u ( \vts \cdot^\perp ) )(\phi)=-\sff(u,\phi^\top),
\end{equation}
for any $\phi\in X^\ast TN$, where $\cd$ denotes the connection induced on the relevant bundle by the connections ${}^X\!D$, $\cd^\top$ and $\cd^\perp$.

Differentiating \eqref{eq:first Weingarten general} and decomposing the result into tangential and normal components yields the \emph{Gauss equation},
\begin{equation}\label{eq:Gauss equation general}
K(\fff^\top(u,w)v-\fff^\top(v,w)u)=\Rm^\top(u,v)w+\Wein(v,\sff(u,w))-\Wein(u,\sff(v,w)),
\end{equation}
or, equivalently,
\ba\label{eq: Gauss arb codim}
K(\fff^\top(u,w)\fff^\top(v,z){}&-\fff^\top(u,z)\fff^\top(v,w))\nonumber\\
={}&\Rm^{\!\top\!}(u,v,w,z)+\fff^\perp\big(\sff(u,z),\sff(v,w)\big)-\fff^\perp\big(\sff(u,w),\sff(v,z)\big),
\ea
and the \emph{Codazzi--Mainardi equation},
\begin{equation}\label{eq:Codazzi equation general}
0=\nabla_v\sff(u,w)-\nabla_u\sff(v,w)\,,
\end{equation}
where, for any $\phi\in \Gamma(TN)$,
\[
{}^X\!\Rm(u,v)X^\ast\phi= \Rm(dX(u),dX(v))\phi\vts
\]
defines the curvature tensor ${}^X\!\Rm$ of ${}^X\!D$, $\Rm^\top$ denotes the curvature tensor of $\cd^\top$, and the covariant derivative of $\sff$ is defined in the canonical way:
\[
\cd_u\sff(v,w)\doteqdot \cd^\perp_u(\sff(V,W))-\sff(\cd^\top_uV,w)-\sff(v,\cd^\top_uW)\vts.
\]
The \emph{mean curvature}, $\mn$, is the trace of the second fundamental form,
\[
\mn\doteqdot \tr_{\fff^\top}\sff\,.
\]
The trace-free part is denoted by
\bann
\mathring{\sff}\doteqdot{}&\sff-\frac{1}{n}\mn\otimes \fff^\top\vts.
\eann
Decomposing $\cd\sff$ into its trace and trace-free parts yields the \emph{Kato inequality},
\begin{equation}\label{eq:Kato inequality}
\vert\cd\sff\vert^2\ge \frac{3}{n+2}\vert \cd^\perp \mn\vert^2.
\end{equation}

Differentiating \eqref{eq:second Weingarten general} and decomposing the result into normal and tangential components yields the \emph{Ricci equation},
\begin{equation}\label{eq:Ricci equation general}
0=\Rm^\perp(u,v)\nu+\sff(v,\Wein(u,\nu))-\sff(u,\Wein(v,\nu))\vts,
\end{equation}
and the Codazzi--Mainardi equation, respectively, where $\Rm^\perp$ denotes the curvature tensor of $\cd^\perp$. Contracting the Ricci equation yields the (\emph{contracted}) \emph{Ricci equation},
\bann
0={}&\Rm^\perp(u,v,\nu,\mu)+\fff^\top(\Wein(u,\mu),\Wein(v,\nu))-\fff^\top(\Wein(v,\mu),\Wein(u,\nu))\,.
\eann

Given a pair of tensors $S,T\in \Gamma(NM\otimes T^\ast M\otimes T^\ast M)$, we define a new tensor $S\wedge T\in \Gamma(NM\otimes NM\otimes T^\ast M\otimes T^\ast M)$ by
\begin{equation}\label{eq:wedge product}
(S\wedge T)(u,v)\doteqdot \tr_{\fff^\top}\big(S(u,\cdot)\otimes T(v,\cdot)-S(v,\cdot)\otimes T(u,\cdot)\big)\vts.
\end{equation}
With this notation, the Ricci equation becomes (after identifying $\Rm^\perp$ with a section of $T^\ast M\otimes T^\ast M\otimes NM\otimes NM$)
\begin{equation}\label{eq:Ricci equation wedge}
0=\Rm^{\perp\!}+\,\sff\wedge\sff\vts.
\end{equation}
Since
\[
\sff\wedge\sff=\mathring{\sff}\wedge\mathring{\sff}\vts,
\]
we may also write the Ricci equation as
\begin{equation}\label{eq:Ricci equation wedge trace free}
0=\Rm^{\perp\!}+\,\mathring{\sff}\wedge\mathring{\sff}\vts.
\end{equation}

Combining all of the above identities yields \emph{Simons' equation},
\ba\label{eq:Simons equation general}
\cd_u\cd_v\sff(w,z)-\cd_w\cd_z\sff(u,v)={}&\sff\big(u,\Wein\big(w,\sff(v,z)\big)\big)-\sff\big(w,\Wein\big(u,\sff(v,z)\big)\big)\nonumber\\
{}&+\sff\big(\Wein\big(w,\sff(u,v)\big),z\big)-\sff\big(\Wein\big(u,\sff(w,v)\big),z\big)\nonumber\\
{}&+\sff\big(v,\Wein\big(w,\sff(u,z)\big)\big)-\sff\big(v,\Wein\big(u,\sff(w,z)\big)\big)\nonumber\\
{}&+K\Big(\fff^\top(u,v)\sff\big(z,w\big)-\fff^\top(w,z)\sff\big(v,u\big)\nonumber\\
{}&\qquad\;+\fff^\top(u,z)\sff\big(v,w\big)-\fff^\top(w,v)\sff\big(z,u\big)\Big).
\ea
%
Taking the trace of \eqref{eq:Simons equation general} yields
\ba\label{eq:Simons equation trace}
\cd^\perp_u\cd^\perp_v\mn-\Delta\sff(u,v)={}&\tr_{\fff^\top}\Big(\sff\big(u,\Wein\big(\cdot,\sff(v,\cdot)\big)\big)+\sff\big(v,\Wein\big(\cdot,\sff(u,\cdot)\big)\big)\nonumber\\
{}&-2\sff\big(\cdot,\Wein\big(u,\sff(v,\cdot)\big)\big)+\sff\big(\cdot,\Wein\big(\cdot,\sff(u,v)\big)\big)\Big)\nonumber\\
{}&-\sff\big(v,\Wein\big(u,\mn\big)\big)+K\big(\fff^\top(u,v)\mn-n\sff\big(u,v\big)\big)\vts.
\ea

\subsection{Mean convex submanifolds}

If $\mnN \doteqdot \vert \mn\vert>0$ on $M$, then the mean curvature vector defines a canonical normal vector field $\nor\in\Gamma(NM)$, called the \emph{principal normal}, via
\[
\nor\doteqdot \frac{\mn}{\vert \mn\vert}.
\]
So the normal bundle splits globally as an orthogonal sum
\[
NM=\NMN\oplus^\perp \NMhat\vts,
\]
where the \emph{principal normal bundle} $\NMN\doteqdot \R\nor$ is the span of $\nor$ in $NM$, and the \emph{conormal bundle} $\NMhat$ is its orthogonal compliment in $NM$. 

The form $\fff^\perp$ induces positive definite forms $\fffN$ and $\fffhat$ on $\NMN$ and $\NMhat$, respectively, and the connection $\cd^\perp$ induces connections $\cdN$ and $\cdhat$ on $\NMN$ and $\NMhat$, respectively, in the usual way. 

Observe that $\cd^\perp_u\nor\in\NMhat_p$ for any $u\in T_pM$, since $\fff^\perp(\nor,\nor)\equiv 1$. Define the \emph{torsion} tensors $\torsionN \in\Gamma\big(T^\ast M\otimes \NMhat{}^\ast\big)$ 
and $\torsionhat\in\Gamma(T^\ast M \otimes \widehat{NM})$ 
by
\[
\torsionN(u,\mu)\nor\doteqdot (\cd^\perp_uM)^{\nor}\;\;\text{and}\;\; \torsionhat(u)\doteqdot (\cd^\perp_u\nor)^{\hat\perp}=\cd^\perp_u\nor\vts,
\]
where $\cdot^\nor$ and $\cdot^{\hat\perp}$ denote the projections onto $\NMN$ and $\NMhat$, respectively. 
Observe that
\[
\torsionN(u,\mu)+\fffhat(\torsionhat(u),\mu)=0\vts.
\]

We shall denote the principal normal component of $\sff$ by $\sffN$ and the conormal projection by $\Ahat$; that is,
 \[
\sffN\doteqdot \inner{\sff(u,v)}{\nor}\vts\;\;\text{and}\;\;\Ahat(u,v)\doteqdot \sff(u,v)-\inner{\sff(u,v)}{\nor}\nor\vts,
\]
so that
\ba\label{eq:sff decomposition}
\sff={}&\nor\otimes\sffN+\Ahat\vts.
\ea
With the notation of \eqref{eq:wedge product}, the Ricci equation \eqref{eq:Ricci equation general} then becomes
\ba
0
={}&\Rm^\perp+\vts(\nor\otimes\mathring\sffN+\Ahat)\wedge(\nor\otimes\mathring\sffN+\Ahat)\nonumber\\
={}&\Rm^\perp+\,(\nor\otimes\mathring\sffN)\wedge\Ahat+\Ahat\wedge(\nor\otimes\mathring\sffN)+\Ahat\wedge\Ahat\,,
\ea
where $\mathring\sffN\doteqdot \sffN-\frac{1}{n}H\fff^\top$ is the trace-free part of $\sffN$.
In particular,
\begin{equation}\label{eq:Ricci wedge trace free B}
\vert{\Rm^\perp}\vert^2=2\vert (\nor\otimes\mathring{\sffN})\wedge \Ahat\vert^2+\vert\Ahat\wedge \Ahat\vert^2\vts.
\end{equation}

Differentiating \eqref{eq:sff decomposition} yields
\bann
\cd_u\sff={}&\cd^\perp_u\nor\otimes \sffN+\nor\otimes\cd^\top_u\sffN+\cd_u\Ahat\\
={}&(\cd_u\Ahat)^\nor+\nor\otimes \cd^\top_u\sffN+(\cd_u\Ahat)^{\hat\perp}+
\torsionhat(u)\otimes \sffN\vts.
\eann
Since
\bann
(\cd_u\Ahat)^\nor(v,w)\doteqdot{}& \fff^\perp(\cd_u\Ahat(v,w),\nor)\nor\\
={}&\torsionN(u,\Ahat(v,w))\nor\vts,
\eann
and
\bann
(\cd_u\Ahat)^{\hat\perp}(v,w)\doteqdot{}& \cd_u\Ahat(v,w)-\fff^\perp(\cd_u\Ahat(v,w),\nor)\nor\\
={}&\cdhat_u\Ahat(v,w)\vts,
\eann
we obtain
\ba\label{eq:grad A decomposition}
\cd_u\sff={}&\nor\otimes\big(\torsionN(u,\Ahat)+\cd^\top_u\sffN\big)+\cdhat_u\Ahat+\torsionhat(u)\otimes \sffN\vts.
\ea

By the Codazzi--Mainardi equation, the tensors
\[
\torsionN(\cdot,\Ahat)+\cd^\top\sffN\;\;\text{and}\;\; \cdhat\Ahat+\torsionhat\otimes\sffN
\]
are totally symmetric in their $TM$ components. Decomposing them into their trace and trace-free components yields the Kato inequalities \cite{Naff}
\begin{equation}\label{eq:KatoA}
\vert\torsionN(\vts\cdot\vts,\Ahat)+\cd^\top\mathring{\sffN}\vert^2\ge \frac{2(n-1)}{n(n+2)}\vert\cd \mnN \vert^2
\end{equation}
and
\begin{equation}\label{eq:KatoB}
\vert\cdhat\Ahat+\torsionhat\otimes\mathring{\sffN}\vert^2\ge \frac{2(n-1)}{n(n+2)}\mnN^2\vert\torsionhat\vert^2\vts.
\end{equation}

We can play the same game with $\cd^2\sff$: differentiating \eqref{eq:grad A decomposition} yields
\bann
\cd_u\cd_v\sff&{}=\torsionhat(u)\otimes\big(\torsionN(v,\Ahat)+\cd^\top_v\sffN\big)+\nor\otimes\big(\cd_u\torsionN(v,\Ahat)+\torsionN(v,\cdhat_u\Ahat)+\cd^\top_u\cd^\top_v\sffN\big)\\
+{}&\nor\otimes\torsionN(u,\cdhat_v\Ahat)+\cdhat_u\cdhat_v\Ahat+\big(\nor\otimes\torsionN(u,\torsionhat(v))+\cd_u\torsionhat(v)\big)\otimes \sffN+\torsionhat(v)\otimes\cd^\top_u\sffN\vts,
\eann
where
\bann
\cd_u\torsionN(v,\mu)\doteqdot u(\torsionN(V,M))-\torsionN(\cd^\top_uV,\mu)-\torsionN(v,\cdhat_uM)
\eann
and
\bann
\cd_u\torsionhat(v)\doteqdot \cdhat_u(\torsionhat(V))-\torsionhat(\cd^\top_uV)\vts.
\eann
Taking the conormal projection yields
\bann
(\cd_u\cd_v\sff)^{\hat\perp}={}&\torsionhat(u)\otimes\big(\torsionN(v,\Ahat)+\cd^\top_v\sffN\big)+\cdhat_u\cdhat_v\Ahat+\cd_u\torsionhat(v)\otimes \sffN+\torsionhat(v)\otimes\cd^\top_u\sffN\vts,
\eann
and hence
\ba\label{eq:Laplace sff conormal}
(\Delta\sff)^{\hat\perp}={}&\tr_{\fff^\top}\big(\torsionhat(\vts\cdot\vts)\otimes\torsionN(\vts\cdot\vts,\Ahat)+2\torsionhat(\vts\cdot\vts)\otimes\cd^\top_{\cdot}\sffN\big)+\dvg\torsionhat\otimes \sffN+\Deltahat\Ahat\vts.
\ea

\subsection{Mean curvature flow}

Now consider a family $X:M^n\times I\to N_K^{n+\ell}$ of immersed submanifolds $X(\cdot,t):M^n\to N_K^{n+\ell}$ of $N_K^{n+\ell}$ which evolve by mean curvature flow. That is,
\[
\pd_tX=\mn\vts,
\]
where $\pd_tX\doteqdot dX(\pd_t)$, and $\pd_t$, the \emph{canonical vector field}, is defined via its action on functions $f\in C^\infty(M\times I)$ by
\[
\pd_t|_{(x_0,t_0)}f\doteqdot \frac{d}{dt}\Big|_{t=0}f(x_0,t_0+t)\vts.
\]

The tangent bundle to $M\times I$ splits as $T(M\times I)=TM\oplus \R\pd_t$, where we conflate $TM$ with the \emph{spatial tangent bundle}, $\{\xi\in T(M\times I):dt(\xi)=0\}$. Here, $dt$ is the one-form dual to $\pd_t$, or, equivalently, the differential of the \emph{time projection} $(p,t)\mapsto t$ from $M\times I$ to $I$.

We make use of the \emph{time-dependent connections} of Andrews--Baker \cite{AnBa10}, which extend the tangential and normal covariant derivatives to allow differentiation in space-time directions $\xi\in TM\oplus \R\pd_t$. That is,
\begin{equation*}
\nabla^\top_\xi V\doteqdot \big({}^X\!D_\xi \big[dX(V)\big]\big)^\top\;\;\text{and}\;\;\nabla^\perp_\xi N\doteqdot \big({}^X\!D_\xi N\big)^\perp,
\end{equation*}
where ${}^X\!D$ is the pullback of $D$ to $X^\ast TN$. Observe that $\cd^\top_\xi$ and $\cd_\xi^\perp$ coincide with $\cd^\top$ and $\cd^\perp$, respectively, when $\xi\in TM$, while
\ba\label{eq:spacetime connection}
\nabla^\top_t V=[\pd_t,V]-\Wein(V,\mn)\vts,
\ea
where $[\,\cdot\,,\cdot\,]$ denotes the Lie bracket and $\cd_t\doteqdot \cd_{\pd_t}$. 

The main advantage of working with the time-dependent connection (as opposed to the more commonly used Lie derivative) is that the tensors $\fff^\top$ and $\fff^\perp$ are $\pd_t$-parallel:
\[
\nabla^\top_{t}\fff^\top=0\;\;\text{and}\;\;\cd^\perp_t\fff^\perp=0\,.
\]
In order to exploit this, we first derive space-time analogues of the Codazzi--Gauss--Mainardi--Ricci--Weingarten equations (following \cite{AnBa10}). 

Observe that
\[
\sff(\pd_t,u)\doteqdot ({}^X\!D_t[dX(u)])^\perp=\cd^\perp_u\mn
\]
and hence
\[
\fff^\top(\Wein(\pd_t,\nu),u)=\fff^\perp(\cd^\perp_u\mn,\nu)\vts.
\]
Thus, proceeding as in the ``stationary'' case, we obtain the ``temporal'' Gauss equation
\begin{equation}\label{eq:temporal Gauss}
0=\Rm^\top(\pd_t,u,v,w)+\fff^\perp(\cd^\perp_w\mn,\sff(u,v))-\fff^\perp(\cd^\perp_v\mn,\sff(u,w))\vts,
\end{equation}
the temporal Codazzi--Mainardi equation
\begin{equation}\label{eq:temporal Codazzi}
\cd_t\sff(u,v)=\cd^\perp_u\cd^\perp_v\mn+\sff(\Wein(u,\mn),v)+\fff^\top(u,v)\mn,
\end{equation}
and the temporal Ricci equation
\begin{equation}\label{eq:temporal Ricci}
0=\Rm^\perp(\pd_t,u,\mu,\nu)+\fff^\perp(\cd^\perp_{\Wein(u,\nu)}\mn,\mu)-\fff^\perp(\cd^\perp_{\Wein(u,\mu)}\mn,\nu)\vts.
\end{equation}
The Gauss and Ricci equations allow us to interchange space-time covariant derivatives. The Codazzi equation provides an evolution equation for $\sff$.

Of course, the space-time connections $\cd^\top$ and $\cd^\perp$ also exhibit ``spatial'' Codazzi--Gauss--Mainardi--Ricci--Weingarten equations when restricted to the spatial tangent bundle.

Next, we derive a useful parabolic analogue of the Jacobi equation for minimal surfaces. It is not needed in the sequel, but it does provide some insight into the subsequent evolution equations.

\begin{proposition}[Jacobi equation]\label{prop:Jacobi}
Let $\{X_\varepsilon:M\times I\to N_K\}_{\varepsilon\in(-\varepsilon_0,\varepsilon_0)}$ be a $1$-parameter family of immersed mean curvature flows with $X_0=X$. The normal component $\sigma\doteqdot\left(\left.\frac{d}{d\varepsilon}\right|_{\varepsilon=0}X_\varepsilon\right)^\perp$ of the variation field satisfies the \textbf{Jacobi equation}
\bann
(\cd_t^\perp-\Delta^\perp)\sigma={}&\tr_{\fff^\top}\big(\sff(\,\cdot\,,\Wein(\,\cdot\,,\sigma))\big)+nK\sigma\vts.
\eann
\end{proposition}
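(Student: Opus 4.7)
The plan is to differentiate the mean curvature flow equation $\pd_tX_\varepsilon=\mn$ in the variational parameter $\varepsilon$ and project the resulting identity onto the normal bundle. First, promote the family to a single map $\tilde X\colon M\times I\times(-\varepsilon_0,\varepsilon_0)\to N_K$ and extend the space-time connections $\cd^\top,\cd^\perp$ to differentiate in $\pd_\varepsilon$ in exactly the way they were extended to differentiate in $\pd_t$. Since $[\pd_t,\pd_\varepsilon]=0$ in the domain and the ambient Levi-Civita connection is torsion-free, the pullback connection satisfies ${}^{\tilde X}\!D_\varepsilon\pd_t\tilde X={}^{\tilde X}\!D_t\pd_\varepsilon\tilde X$. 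Setting $\varepsilon=0$, substituting $\pd_t\tilde X=\mn$ and $\pd_\varepsilon\tilde X|_{\varepsilon=0}=V^\top+\sigma$ (where $V^\top$ denotes the tangential component of the variation vector field), and projecting onto $NM$ via the Gauss--Weingarten formulas and the identity $\sff(\pd_t,u)=\cd^\perp_u\mn$ recorded just above \eqref{eq:temporal Gauss}, one obtains
$$({}^X\!D_\varepsilon\mn)^\perp=\cd^\perp_t\sigma+\cd^\perp_{V^\top}\mn.$$

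The heart of the argument is to compute $({}^X\!D_\varepsilon\mn)^\perp$ directly, i.e., to establish the first variation formula for the mean curvature vector in the spaceform ambient. I would mirror the derivation of the temporal Codazzi equation \eqref{eq:temporal Codazzi}, but applied to the $\varepsilon$ direction: derive an $\varepsilon$-analogue of \eqref{eq:temporal Codazzi} for $\cd_\varepsilon\sff(u,v)$ in which $\sigma$ plays the role $\mn$ played before and $V^\top$ produces extra reparametrization terms; then take the trace with $\fff^\top$, commuting spatial covariant derivatives using the spatial Codazzi--Mainardi equation \eqref{eq:Codazzi equation general} together with the Gauss equation \eqref{eq: Gauss arb codim} and the Ricci equation \eqref{eq:Ricci equation general}. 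The expected output is
$$({}^X\!D_\varepsilon\mn)^\perp=\Delta^\perp\sigma+\tr_{\fff^\top}\bigl(\sff(\,\cdot\,,\Wein(\,\cdot\,,\sigma))\bigr)+nK\sigma+\cd^\perp_{V^\top}\mn,$$
where the final summand is the tangential reparametrization contribution. The coefficient $nK$ appears because, for $\sigma\perp u$, the ambient curvature tensor satisfies $\Rm(u,\sigma)u=K\fff^\top(u,u)\sigma$, which upon tracing over an orthonormal tangent frame produces $nK\sigma$, in the same way the $K$-terms appear in the trace of Simons' equation \eqref{eq:Simons equation trace}.

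Equating the two expressions for $({}^X\!D_\varepsilon\mn)^\perp$, the $\cd^\perp_{V^\top}\mn$ terms cancel and the Jacobi equation follows. The main obstacle is the bookkeeping of the tangential component $V^\top$: it appears on both sides of the intermediate identity and must cancel cleanly; one must also carefully track the spaceform curvature contributions through the Gauss and Ricci equations in order to extract the precise coefficient $nK$ of $\sigma$. The Simons-type reorganization of $\Delta^\perp\sigma$ out of the commuted second derivatives of $\sigma$ is essentially the same manipulation that yields \eqref{eq:Simons equation trace}, so no genuinely new identity is needed beyond those already recorded in the preliminaries.
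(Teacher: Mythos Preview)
Your proposal is correct and follows essentially the same route as the paper: both arguments commute ${}^X\!D_\varepsilon$ and ${}^X\!D_t$, expand $({}^X\!D_\varepsilon\mn)^\perp$ via Gauss--Weingarten and curvature commutators, and verify that the tangential reparametrization terms cancel. The paper organizes the computation by expanding $\cd^\perp_i\cd^\perp_j\sigma$ and ${}^X\!D_t\sigma$ separately in normal coordinates (so that $({}^X\!D_\varepsilon\sff_{ij})^\perp$ appears as an intermediate quantity in both and drops out upon subtraction), whereas you equate two expressions for $({}^X\!D_\varepsilon\mn)^\perp$; the algebraic content and the handling of the $nK\sigma$ term are identical.
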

\begin{proof}
Denote by $\tau$ the tangential component of the variation field. Fix $(p,t)$ and consider a normal coordinate neighbourhood $(U,\{x^i\}_{i=1}^n)$ for $M$ about $p$ with respect to $g_t$. Choosing $U$ smaller if necessary, we may identify $\tau$ with a section of $TU$. Using \eqref{eq:derivative of perp}, we compute at $(p,t)$
\bann
\cd^\perp_i\cd^\perp_j\sigma={}&\cd^\perp_i({}^X\!D_j\sigma)^\perp\\
={}&-\sff(\pd_i,({}^X\!D_j\sigma)^\top)+({}^X\!D_i({}^X\! D_j\sigma))^\perp\\
={}&\sff(\pd_i,\Wein(\pd_j,\sigma))+({}^X\!D_i({}^X\! D_j(\pd_\varepsilon X-\tau)))^\perp\\
={}&\sff(\pd_i,\Wein(\pd_j,\sigma))+\big({}^X\!D_i({}^X\! D_{\varepsilon}\pd_jX)-{}^X\!D_i(\cd^\top_j\tau+\sff(\pd_j,\tau))\big)^\perp\\
={}&\sff(\pd_i,\Wein(\pd_j,\sigma))+({}^X\!D_{\varepsilon}({}^X\!D_i\pd_jX)+{}^X\!\Rm(\pd_\varepsilon,\pd_i)\pd_j)^\perp\\
{}&-\sff(\pd_i,\cd_j\tau)-\cd_i\sff(\pd_j,\tau)-\sff(\pd_j,\cd^\top_i\tau)\\
={}&\sff(\pd_i,\Wein(\pd_j,\sigma))+({}^X\!D_{\varepsilon}(dX(\cd^\top_i\pd_j)+\sff_{ij})+\Rm(\sigma+\tau,\pd_iX)\pd_jX)^\perp\\
{}&-\sff(\pd_i,\cd_j\tau)-\cd_i\sff(\pd_j,\tau)-\sff(\pd_j,\cd^\top_i\tau)\vts.
\eann
Note that
\bann
{}^X\!D_{\varepsilon}[dX(\cd_i\pd_j)]={}&\pd_{\varepsilon}\Gamma_{ij}{}^kdX(\pd_k)+\Gamma_{ij}{}^k{}^X\!D_{\varepsilon}[dX(\pd_k)]\vts,
\eann
and hence
\[
({}^X\!D_{\varepsilon}[dX(\cd^\top_i\pd_j)])^\perp=0
\]
at $(p,t)$. Since, by the Codazzi equation,
\[
(\Rm(\tau,\pd_iX)\pd_jX)^\perp=0=\cd_i\sff(\tau,\pd_j)-\cd_\tau\sff(\pd_i,\pd_j),
\]
we obtain
\bann
\cd^\perp_i\cd^\perp_j\sigma={}&\sff(\pd_i,\Wein(\pd_j,\sigma))-(\Rm(\pd_iX,\sigma)\pd_jX)^\perp\\
{}&+({}^X\!D_{\varepsilon}(\sff_{ij}))^\perp-\sff(\pd_i,\cd^\top_j\tau)-\sff(\pd_j,\cd^\top_i\tau)-\cd_\tau\sff(\pd_j,\pd_j)\vts.
\eann

On the other hand, the mean curvature flow equation yields
\bann
{}^X\!D_t\sigma={}&{}^X\!D_t[dX(\pd_\varepsilon)-\tau]\\
={}&{}^X\!D_\varepsilon[dX(\pd_t)]-{}^X\!D_t\tau\\
={}&{}^X\!D_\varepsilon \mn-{}^X\!D_t\tau\vts.
\eann
Since
\bann
\pd_\varepsilon g_{ij}={}&\inner{{}^X\!D_\varepsilon\pd_iX}{\pd_jX}+\inner{\pd_iX}{{}^X\!D_\varepsilon\pd_jX}\\
={}&\inner{{}^X\!D_i(\tau+\sigma)}{\pd_jX}+\inner{\pd_iX}{{}^X\!D_j(\tau+\sigma)}\\
={}&g(\cd^\top_i\tau-\Wein(\pd_i,\sigma),\pd_j)+g(\pd_i,\cd^\top_j\tau-\Wein(\pd_j,\sigma))\vts,
\eann
and hence
\bann
\pd_\varepsilon g^{ij}={}&-g^{ip}g^{jq}\pd_\varepsilon g_{pq}\\
={}&-g^{ip}g^{jq}\left(g(\cd^\top_p\tau-\Wein(\pd_p,\sigma),\pd_q)\right)+g\left(\pd_p,\cd^\top_q\tau-\Wein(\pd_q,\sigma))\right)\vts,
\eann
%
%
%
we obtain
\bann
{}^X\!D_\varepsilon \mn={}&g^{ij}{}^X\!D_\varepsilon(\sff_{ij})-2g^{ip}g^{jq}g(\cd^\top_p\tau-\Wein(\pd_p,\sigma),\pd_q)\sff_{ij}\\
={}&g^{ij}\left({}^X\!D_\varepsilon(\sff_{ij})-2\sff(\cd^\top_i\tau-\Wein(\pd_i,\sigma),\pd_j)\right)
\eann
and hence
\[
\cd_t^\perp\sigma=\left(g^{ij}\left({}^X\!D_\varepsilon\sff_{ij}-2\sff(\cd^\top_i\tau-\Wein(\pd_i,\sigma),\pd_j)\right)-{}^X\!D_t\tau\right)^\perp\vts.
\]
Finally, since
\[
{}^X\!D_t\tau-{}^X\!D_\tau \mn=dX[\pd_t,\tau]=\frac{\pd \tau^i}{\pd t}dX(\pd_i)
\]
has no normal component, we conclude that
\bann
(\cd_t^\perp-\Delta^\perp)\sigma={}&\tr_{\fff^\top}\Big(\sff(\,\cdot\,,\Wein(\,\cdot\,,\sigma))+(\Rm(\,\cdot\,,\sigma)\,\cdot\,)^\perp\Big)\\
={}&\tr_{\fff^\top}\big(\sff(\,\cdot\,,\Wein(\,\cdot\,,\sigma))\big)+nK\sigma
\eann
as desired.
\end{proof}

Taking the trace of \eqref{eq:temporal Codazzi}, or applying Proposition \ref{prop:Jacobi} with the variation  $X_\varepsilon(x,t)\doteqdot X(x,t+\varepsilon)$, yields
\begin{equation}\label{eq:evolve H}
(\cd_t^\perp-\Delta^\perp)\mn=\tr_{\fff^\top}\big(\sff(\,\cdot\,,\Wein(\,\cdot\,,\mn))\big)+nK\mn\vts.
\end{equation}
It follows that
\begin{equation}\label{eq:evolve H squared}
(\pd_t-\Delta)\frac{1}{2}\vert \mn\vert^2=\vert\Wein(\,\cdot\,,\mn)\vert^2+nK\vert \mn\vert^2-\vert \cd^\perp \mn\vert^2\vts.
\end{equation}

Applying the trace Simons equation \eqref{eq:Simons equation trace} to the temporal Codazzi equation \eqref{eq:temporal Codazzi} yields
\ba\label{eq:evolve sff}
(\cd_t-\Delta)\sff(u,v)={}&\tr_{\fff^\top}\Big(\sff\big(u,\Wein\big(\cdot,\sff(v,\cdot\big)\big)+\sff\big(v,\Wein\big(\cdot,\sff(u,\cdot)\big)\big)-2\sff\big(\cdot,\Wein\big(u,\sff(v,\cdot)\big)\big)\nonumber\\
{}&+\sff\big(\cdot,\Wein\big(\cdot,\sff(u,v)\big)\big)\Big)+\fff^\top(u,v)K\mn-nK\Big(\sff\big(u,v\big)-\tfrac{1}{n}\fff^\top(u,v)\mn\Big).
\ea
Tracing this of course recovers \eqref{eq:evolve H}.

We also obtain
\begin{align}\label{eq:evolve A squared}
(\cd_t-\Delta)\frac{1}{2}\vert\sff\vert^2={}&-\vert\cd\sff\vert^2+\vert{\inner{\sff}{\sff}^\top}\vert^2+\vert{\mathring\sff\wedge\mathring\sff}\vert^2+nK\vert \sff\vert^2
-2nK\vert\mathring\sff\vert^2\vts,
\end{align}
where $\inner{\sff}{\sff}^\top\in \Gamma(NM\otimes NM)$ is formed from $\sff\otimes \sff$ by contracting the tangential components. That is, it is dual to the tensor $\inner{\Wein}{\Wein}^\top\in \Gamma(N^\ast M\otimes N^\ast M)$ defined by
\[
\inner{\Wein}{\Wein}^\top\!(\mu,\nu)\doteqdot\fff^\top\big(\Wein(\cdot,\mu),\Wein(\cdot,\nu)\big)\vts.
\]

Now consider points where $\mn\neq 0$. By \eqref{eq:evolve H squared},
\begin{equation}\label{eq:evolve H scalar}
(\pd_t-\Delta)\mnN =\big(\vert\sffN\vert^2-\vert\torsionhat\vert^2+nK\big)\mnN 
\end{equation}
and, since
\[
\Delta^\perp\nor=\dvg\torsionhat+\tr_{\fff^\top}\big(\torsionN(\vts\cdot\vts,\torsionhat(\vts\cdot\vts))\big)\nor\vts,
\]
\begin{equation}\label{eq:evolve nor}
\cd_t^\perp\nor=\dvg\torsionhat+\tr_{\fff^\top}\big(\torsionN(\vts\cdot\vts,\torsionhat(\vts\cdot\vts))\big)\nor+\tr_{\fff^\top}\big(\Ahat(\,\cdot\,,\Wein(\,\cdot\,,\nor))\big)+\vert\torsionhat\vert^2\nor+2\torsionhat\left(\cd\log\mnN\right).
\end{equation}

Projecting \eqref{eq:evolve sff} onto the conormal bundle yields
\bann
\big[(\cd_t-\Delta)\sff\big]^{\hat\perp}(u,v)={}&\tr_{\fff^\top}\Big(\Ahat\big(u,\Wein\big(\cdot,\sff(v,\cdot\big)\big)+\Ahat\big(v,\Wein\big(\cdot,\sff(u,\cdot)\big)\big)-2\Ahat\big(\cdot,\Wein\big(u,\sff(v,\cdot)\big)\big)\nonumber\\
{}&\qquad\;\;+\Ahat\big(\cdot,\Wein\big(\cdot,\sff(u,v)\big)\big)\Big)-nK\Ahat\big(u,v\big).
\eann

On the other hand, differentiating the principal-conormal decomposition of $\sff$ yields
\bann
\cd_t\sff={}&\cd_t^\perp\nor\otimes \sffN+\nor\otimes\cd_t^\top\sffN+\cd_t\Ahat\\
={}&\cd_t^\perp\nor\otimes \sffN+\cdhat_t\Ahat+\nor\otimes\big(\cd_t^\top\sffN+\torsionN(\vts\cdot\vts,\Ahat)\big)
\eann
and hence, recalling \eqref{eq:Laplace sff conormal} and \eqref{eq:evolve nor},
\bann
\big[(\cd_t-\Delta)\sff\big]^{\hat\perp}
={}&(\cdhat_t-\Deltahat)\Ahat+\big[\tr_{\fff^\top}\big(\Ahat(\,\cdot\,,\Wein(\,\cdot\,,\nor))\big)+2\torsionhat\left(\cd\log \mnN\right)\big]\otimes \sffN\\
{}&-\tr_{\fff^\top}\Big(\torsionhat(\vts\cdot\vts)\otimes\torsionN(\vts\cdot\vts,\Ahat)+2\torsionhat(\vts\cdot\vts)\otimes\cd_\cdot^\top\sffN\Big)\vts.
\eann
Since
\bann
\vert\cd\Ahat\vert^2={}&\vert\cdhat\Ahat\vert^2+\vert\torsionN(\cdot,\Ahat)\vert^2\\
={}&\vert\cdhat\Ahat\vert^2+\fffhat\big(\tr_{\fff^\top}(\torsionhat(\vts\cdot\vts)\otimes\torsionN(\vts\cdot\vts,\Ahat)),\Ahat\big)\vts,
\eann
we thus obtain, wherever $\mn\neq 0$ (cf. \cite{Naff}),
\ba\label{eq:evolve tor}
(\pd_t-\Delta)\frac{1}{2}\vert\Ahat\vert^2={}&\fffhat((\cdhat_t-\Deltahat)\Ahat,\Ahat)-\vert\cdhat\Ahat\vert^2\nonumber\\
={}&\big\vert\big\langle\sff,\Ahat\big\rangle^\top\big\vert^2-\big\vert\big\langle\nor\otimes\sffN,\Ahat\big\rangle^\top\big\vert^2+\vert\sff\wedge\Ahat\vert^2-nK\vert\Ahat\vert^2-\vert\cd\Ahat\vert^2\nonumber\\
{}&-2\mnN \fff\Big(\nor\otimes\cd\frac{\sffN}{\mnN },\cd\Ahat\Big)\nonumber\\
={}&\big\vert\big\langle\Ahat,\Ahat\big\rangle^{\!\top}\big\vert^2+\vert\Ahat\wedge\Ahat\vert^2+\vert\nor\otimes\sffN\wedge \Ahat\vert^2-nK\vert\Ahat\vert^2-\vert\cd\Ahat\vert^2\nonumber\\
{}&-2\mnN \fff\Big(\nor\otimes\cd\frac{\sffN}{\mnN },\cd\Ahat\Big)\vts.
\ea

Given tensor fields $S$ and $T$ formed from $TM$ and $NM$, we denote by $S\ast T$ any tensor field resulting from linear combinations of contractions of $S\otimes T$ with $\fff^\top$ and $\fff^\perp$. By \eqref{eq:spacetime connection}, \eqref{eq:temporal Gauss}, and \eqref{eq:temporal Ricci},
\bann
\cd_t(\cd_{\cdot\,} T)
={}&\cd(\cd_tT)+\sff\ast \sff\ast \cd T+\sff\ast\cd \sff\ast T.
\eann
By \eqref{eq:Gauss equation general} and \eqref{eq:Ricci equation general},
\bann
\Delta(\cd T)
={}&\cd(\Delta T)+\sff\ast \sff\ast \cd T+K\ast \cd T+\sff\ast \cd \sff\ast T\,.
\eann
Thus,
\bann
(\cd_t-\Delta )(\cd \sff)
={}&\sff\ast \sff\ast \cd \sff+K\ast\cd \sff\,,
\eann
and hence, by Young's inequality,
\begin{align}
(\partial_t-\Delta)|\nabla \sff|^2\le{}& -2 |\nabla^2 \sff|^2 + c(|\sff|^2+K)|\nabla \sff|^2\,,\label{eqn_evolderiv}
\end{align}
where $c$ is a constant that depends only on $n$ and $k$.

Similarly,
\bann
(\cd_t-\Delta )(\cd^2 \sff)={}&\sff\ast \sff\ast \cd^2 \sff+\sff\ast\cd \sff\ast \cd \sff+K\ast\cd^2 \sff\,,
\eann
and hence
\begin{align}
(\partial_t-\Delta)|\nabla^2\sff|^2\le{}&-2|\nabla^3\sff|^2 + c\lsb (|\sff|^2+K)|\nabla^2\sff|^2+|\cd \sff|^4\rsb,\label{eqn_evolderiv2}
\end{align}
where $c$ is a constant that depends only on $n$ and $k$.

Similar inequalities hold for higher derivatives of $\sff$ since, by an induction argument,
\ba\label{eq:evolve derivatives of A}
(\cd_t-\Delta )(\cd^m \sff)={}&K\ast\cd^m\sff+\sum_{i+j+k=m}\cd^i\sff\ast \cd^j\sff\ast \cd^k\sff\,.
\ea

The following ``Bernstein estimates'' are a standard application of the ``rough'' evolution equations \eqref{eq:evolve derivatives of A}. For a proof in the Euclidean case (which carries over with minor modifications) see, for example, {\cite[Theorem 6.24]{EGF}}.

\begin{proposition}[Bernstein estimates]\label{prop:Bernstein}
Let $X:M\times [0,\lambda K^{-1}]\to N_K^{n+\ell}$ be a solution to mean curvature flow, where $K>0$. If
\[
\max_{M\times[0,\lambda K^{-1}]}\vert \sff\vert^2\le \Lambda_0K\,,
\]
then
\[
t^m\vert\cd^m \sff\vert^2\le \Lambda_{m}K\,,
\]
where $\Lambda_{m}$ depends only on $n$, $k$, $m$, $\lambda$ and $\Lambda_0$.
\end{proposition}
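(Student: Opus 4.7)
The plan is to proceed by induction on $m$, the base case $m=0$ being the hypothesis. Assume that the estimates $t^i|\cd^i\sff|^2 \le \Lambda_i K$ have been established on $M\times[0,\lambda K^{-1}]$ for all $0\le i<m$, with $\Lambda_i$ depending on the stated parameters. I would apply the parabolic maximum principle to the auxiliary function
\[
f \doteqdot t^m |\cd^m \sff|^2 + \alpha\, t^{m-1} |\cd^{m-1}\sff|^2,
\]
where $\alpha>0$ is chosen sufficiently large depending on $n,\ell,m,\lambda,\Lambda_0,\dots,\Lambda_{m-1}$.

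Starting from \eqref{eq:evolve derivatives of A} and applying Young's inequality, I extract an evolution inequality of the form
\[
(\partial_t-\Delta)|\cd^m\sff|^2 \le -2|\cd^{m+1}\sff|^2 + c(|\sff|^2+K)|\cd^m\sff|^2 + R_m,
\]
where $R_m$ collects the $\sum_{i+j+k=m}\cd^i\sff\ast\cd^j\sff\ast\cd^k\sff\ast\cd^m\sff$ terms with at least one factor of order strictly less than $m$. The hypothesis $|\sff|^2\le\Lambda_0 K$ bounds the middle term uniformly by a multiple of $K|\cd^m\sff|^2$; the terms in $R_m$ are controlled by the inductive estimates $t^i|\cd^i\sff|^2\le \Lambda_i K$ for $i<m$ and, via Young, reduced to a constant multiple of $|\cd^m\sff|^2$ plus errors of size $t^{-m}K$.

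Multiplying the above by $t^m$ and combining with the analogous inequality for the second summand of $f$ (which, weighted by $\alpha t^{m-1}$, contributes in particular the term $-2\alpha t^{m-1}|\cd^m\sff|^2$), the positive contribution $mt^{m-1}|\cd^m\sff|^2$ coming from $\partial_t(t^m|\cd^m\sff|^2)$ and the $ct^m(|\sff|^2+K)|\cd^m\sff|^2 \le c\lambda(\Lambda_0+1)t^{m-1}|\cd^m\sff|^2$ term are absorbed for $\alpha$ chosen large enough. This yields $(\partial_t-\Delta)f \le CK$ for a constant $C$ depending only on the stated parameters. Since $f(\cdot,0)\le\alpha\Lambda_0 K$ (and equals $0$ for $m\ge 2$), the parabolic maximum principle then gives $f\le \Lambda_m K$ on $M\times[0,\lambda K^{-1}]$, whence $t^m|\cd^m\sff|^2\le \Lambda_m K$.

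The main technical obstacle is the careful bookkeeping of the cross-terms in $R_m$: after accounting for the weights $t^i$ attached to the various norms $|\cd^i\sff|^2$, each summand carries a distinct power of $t$, and one must verify that the exponents line up so that no term dominates over $K$ uniformly on the time interval. If the two-term $f$ above proves inadequate, one enlarges the auxiliary function to $f = \sum_{j=0}^{m} \alpha_j t^j |\cd^j\sff|^2$ with coefficients $\alpha_j$ chosen inductively from the top down, exactly as in \cite[Theorem 6.24]{EGF}; the only new feature relative to the Euclidean case is the $K\ast \cd^m\sff$ contribution in \eqref{eq:evolve derivatives of A}, which is a harmless zero-order term. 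Non-compactness of $M$ is handled by the properness assumption and the bounded-geometry consequences of $|\sff|^2\le\Lambda_0 K$ (combined, if needed, with a standard spatial cut-off to localize the argument).
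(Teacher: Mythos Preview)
Your proposal is correct and is precisely the standard inductive maximum-principle argument the paper has in mind; the paper gives no independent proof but simply refers to \cite[Theorem~6.24]{EGF}, noting that the Euclidean argument carries over with minor changes, and your sketch (including the fallback to the full sum $\sum_j \alpha_j t^j|\cd^j\sff|^2$) is exactly that argument. One small bookkeeping correction: the Young-inequality error coming from $R_m$ is of order $K^3 t^{-(m-1)}$ rather than $Kt^{-m}$ (so that $t^m R_m \le C t^{m-1}|\cd^m\sff|^2 + C\lambda K^2$), but after integrating over $[0,\lambda K^{-1}]$ this still yields the claimed bound of order $K$.
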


\subsection{A Poincar\'e-type inequality}

We need the following Poincar\'e-type inequality, which combines the arguments of \cite[Proposition 2.2]{LangfordNguyen} and \cite[Proposition 3.2]{LyNgConvexity}. 
Note that the hypothesis \eqref{eq:codimension hypothesis} is motivated by the codimension estimate proved in Section \ref{sec:codimension} below.

\begin{proposition}\label{prop:Poincare}
There exists $\gamma=\gamma(n,\alpha,\eta,\delta,\Lambda)>0$ with the following property. Given a smoothly immersed submanifold $X: M^n\to S^{n+\ell}_K$ satisfying
\begin{equation}\label{eq:codimension hypothesis}
\vert\Ahat\vert^2\le \Lambda K^\delta\big(\vert\mn\vert^2+K\big)^{1-\delta}\;\;\text{wherever}\;\;\mn\neq 0\vts,
\end{equation}
define the ``acylindrical'' set $U_{\alpha,\eta}\subset M$ by
\[
U_{\alpha,\eta}\doteqdot \left\{x\in M:|\sff|^2-\tfrac{1}{n-2+\alpha}\vert\mn\vert^2-2(2-\alpha)K\leq 0\leq |\sff|^2-\left(\tfrac{1}{n-1}+\eta\right)\vert\mn\vert^2\right\}\vts.
\]
If $u\in W^{1,2}(M)$ satisfies $\operatorname{spt}u\Subset U_{\alpha,\eta}$, then
\[
\gamma\int u^2(\vert\mn\vert^2+K)\vts d\mu\leq \int u^2\left(K+\frac{|\nabla u|}{u}\frac{|\nabla \sff|}{\sqrt{\vert\mn\vert^2+K}}+\frac{|\nabla \sff|^2}{\vert\mn\vert^2+K}\right)d\mu\vts.
\]
\end{proposition}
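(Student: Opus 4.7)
The strategy is to adapt the proof of \cite[Proposition 3.2]{LyNgConvexity}, using the codimension hypothesis \eqref{eq:codimension hypothesis} in place of $|\Ahat|$ terms appearing directly on the right-hand side. By density, it suffices to take $u$ smooth. I would introduce the symmetrized commutator tensor
\[
E(u,v,w,z) \doteqdot \cd_{(u}\cd_{v)}\sff(w,z) - \cd_{(w}\cd_{z)}\sff(u,v),
\]
and, via Simons' equation \eqref{eq:Simons equation general} together with \cite[Lemma 3.1]{LyNgConvexity}, derive the pointwise bound
\[
|E|^2 \geq 2|F|^2 - C\big(|\mn|^5 + K^{5/2}\big)|\Ahat|
\]
wherever $\mn \neq 0$, where $C = C(n)$ and $F \doteqdot \sffN\otimes\sffN^{2} - \sffN^{2}\otimes\sffN + K(\fff^\top\otimes\sffN - \sffN\otimes\fff^\top)$.

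The heart of the argument is the algebraic claim that there exists $\gamma_0 = \gamma_0(n,\alpha,\eta) > 0$ such that
\[
|F|^2 + |\mn|^5|\Ahat| + K^3 \geq \gamma_0 W^3, \qquad W \doteqdot |\mn|^2 + K,
\]
holds throughout $U_{\alpha,\eta}$. I would prove this by contradiction: if it fails, there is a sequence of eigenvalue tuples $\lambda^i$ of $\sffN$ and trace-free tensors $\Ahat_i$ satisfying the acylindrical pinching but with the ratio tending to zero. This forces $(\tr\lambda^i)^2 \to \infty$, so after normalizing by $\tr\lambda^i$ and passing to a subsequential limit one obtains $\Ahat_\infty = 0$ and $\lambda^\infty = \tfrac{1}{m}\sum_{p=1}^m e_p$ for some integer $1 \leq m \leq n$. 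The two pinching inequalities of $U_{\alpha,\eta}$ then force $\tfrac{1}{n-1}+\eta \leq \tfrac{1}{m}\leq \tfrac{1}{n-2+\alpha}$, hence $n-2 < m < n-1$, which is absurd. The integer constraint here is the main obstacle and the key mechanism — without the upper pinching in $U_{\alpha,\eta}$, the limit could correspond to a genuine cylinder $S^{n-m}\times\mbb{R}^m$.

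Combining the previous steps yields $|F|^2 \leq \tfrac{1}{2}|E|^2 + C(|\mn|^5 + K^{5/2})|\Ahat|$, whence
\[
\gamma_0 \int u^2 W\, d\mu \leq \int \frac{u^2}{W^2}|E|^2\, d\mu + C\int u^2\big(|\mn||\Ahat| + K^{1/2}|\Ahat| + K\big)\, d\mu.
\]
I would then apply the divergence theorem to the $|E|^2$ integral — writing $E$ as $\cd^2\sff$ up to permutations and integrating by parts — to obtain
\[
\int \frac{u^2}{W^2}|E|^2\, d\mu \leq C\int u^2\left(\frac{|\cd u|}{u} + \frac{|\cd\sff|}{W^{1/2}}\right)\frac{|\cd\sff|}{W^{1/2}}\, d\mu.
\]

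It remains to absorb the $|\Ahat|$ terms into the left-hand side or into $K$. On $U_{\alpha,\eta}$ the upper pinching gives $|\sff|^2 \leq 4W$, so $|\mn||\Ahat| \leq 2W^{1/2}|\Ahat|$; Young's inequality and the codimension hypothesis \eqref{eq:codimension hypothesis} then give
\[
W^{1/2}|\Ahat| + K^{1/2}|\Ahat| \leq \varepsilon W + C(\varepsilon)|\Ahat|^2 \leq \varepsilon W + C(\varepsilon)\Lambda K^\delta W^{1-\delta} \leq 2\varepsilon W + C(\varepsilon,\delta,\Lambda) K,
\]
where the final step uses Young's inequality once more with exponents $\tfrac{1}{\delta}$ and $\tfrac{1}{1-\delta}$. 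Choosing $\varepsilon$ small enough to absorb the $W$-term back into the left-hand side yields the claimed inequality with $\gamma = \gamma_0/2$. Points where $\mn = 0$ (if any lie in $U_{\alpha,\eta}$) can be handled by continuity, since both sides depend continuously on $\sff$ and $K$.
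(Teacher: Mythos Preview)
Your proposal is correct and follows essentially the same approach as the paper. The only minor difference is that the paper absorbs the $|\Ahat|$-terms pointwise via the codimension hypothesis and Young's inequality \emph{before} integrating, yielding the clean pointwise bound $\gamma W^3 \leq |E|^2 + K^3$ throughout $U_{\alpha,\eta}$ (which also handles points with $\mn = 0$ directly, since $W = K$ there), whereas you integrate first and absorb afterwards.
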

\begin{proof}
Since $C^\infty(M)$  is dense in $W^{1,2}(M)$, it suffices to establish the estimate for smooth $u$. Define a tensor $E\in\Gamma(T^\ast M\otimes T^\ast M\otimes T^\ast M\otimes T^\ast M)$ by
\bann
E(u,v,w,z)\doteqdot{}&\cd_{(u}\cd_{v)}\sff(w,z)-\cd_{(w}\cd_{z)}\sff(u,v)\\
\doteqdot{}&\frac{1}{2}\big(\cd_{u}\cd_{v}\sff(w,z)-\cd_w\cd_z\sff(u,v)+\cd_{v}\cd_{u}\sff(w,z)-\cd_z\cd_v\sff(u,v)\big)\vts.
\eann
We first consider points where $\mn\neq 0$. By Simons' equation \eqref{eq:Simons equation general}, arguing as in \cite[Lemma 3.1]{LyNgConvexity} yields a constant $C$ depending only on $n$ such that
\begin{equation}\label{eq: E to F}
\vert E\vert^2\ge 2\big\vert\sffN\otimes\sffN^2-\sffN^2\otimes\sffN+K(g\otimes\sffN-\sffN\otimes g)\big\vert^2-C(\vert\sffN\vert^5+K^{\frac{5}{2}})\vert\Ahat\vert
\end{equation}
wherever $\vert\mn\vert>0$. 

Let us define 
\[F\doteqdot\sffN\otimes\sffN^2-\sffN^2\otimes\sffN+K(g\otimes\sffN-\sffN\otimes g)\vts.\]
We claim there is a constant $\gamma=\gamma(n,\alpha,\eta)$ such that 
\begin{equation}\label{eq:poinc_F_lower}
\vert F\vert^2+\vert\mn\vert^5\vert\Ahat\vert+K^3\geq\gamma W^3
\end{equation}
at points in $U_{\alpha,\eta}$ with $\mn\neq0$, where
\begin{equation*}
W\doteqdot \vert\mn\vert^2+K\vts.
\end{equation*}
In fact, we will show that \eqref{eq:poinc_F_lower} holds as an algebraic inequality. If we write $\lambda_p$ for the eigenvalues of $h$, this inequality can be rewritten as
\[\sum_{p,q}(\lambda_p\lambda_q+K)^2(\lambda_p-\lambda_q)^2+\tr(\lambda)^5\vert\Ahat\vert+K^3\geq\gamma (\tr(\lambda)^2 + K)^3,\]
where
\begin{equation}\label{eq:Poincare constants}
|\lambda|^2\doteqdot \sum_p \lambda_p^2\;\;\text{and}\;\; \tr(\lambda)\doteqdot\sum_p\lambda_p\vts.
\end{equation}
Thus, if \eqref{eq:poinc_F_lower} is false (as an algebraic inequality), then we can find a sequence of vector-valued symmetric bilinear forms $\sff_i \in \mathbb{R}^n\odot\mathbb{R}^n\otimes\mathbb{R}^\ell$ with nonvanishing trace such that the conditions $\tr(\lambda^i)>0$, 
\begin{equation}
\label{eq:poinc_sequence_1}
0 \leq \vert\lambda^i\vert^2+\vert\Ahat_i\vert^2-\left(\tfrac{1}{n-1}+\eta\right)\tr(\lambda^i)^2,
\end{equation}
and
\begin{equation}
\label{eq:poinc_sequence_2}
\vert\lambda^i\vert^2+\vert\Ahat_i\vert^2-\tfrac{1}{n-2+\alpha}\tr(\lambda^i)^2-2(2-\alpha)K\leq 0
\end{equation}
hold for every $i\in\mathbb{N}$, and yet
\begin{equation}
\label{eq:poinc_sequence_3}
\frac{\sum_{p,q}(\lambda_p^i\lambda_q^i+K)^2(\lambda_p^i-\lambda_q^i)^2+\tr(\lambda^i)^5\vert\Ahat_i\vert+K^3}{(\tr(\lambda^i)^2 + K)^3}\to0
\end{equation}
as $i \to \infty$. It follows that $\tr(\lambda^i)^2 \to \infty$ as $i \to \infty$, and hence, as a consequence of \eqref{eq:poinc_sequence_2},
\[\tr(\lambda^i)^{-2}\vert\lambda^i\vert^2+\tr(\lambda^i)^{-2}\vert\Ahat_i\vert^2\leq\tfrac{1}{n-2+\alpha}+2(2-\alpha)K\tr(\lambda^i)^{-2}\vts.\]
So (passing to a subseqence if necessary) we may extract limits
\[\tr(\lambda^i)^{-1}\lambda^i \to \lambda^\infty \;\;\text{and}\;\; \tr(\lambda^i)^{-1}\Ahat_i \to \Ahat_\infty.\]
Passing to the limit in \eqref{eq:poinc_sequence_3} we find that
\[\sum_{p,q}(\lambda_p^\infty\lambda_q^\infty)^2(\lambda_p^\infty-\lambda_q^\infty)^2+\vert\Ahat_\infty\vert=0,\]
and hence $\Ahat_\infty=0$ and $\lambda^\infty = \tfrac{1}{m}\sum_{p=1}^m e_p$ for some $1\leq m \leq n$, where $\{e_p\}_{p=1}^n$ is the standard basis for $\mathbb{R}^n$. Passing to limits in \eqref{eq:poinc_sequence_1} and \eqref{eq:poinc_sequence_2}, and inserting $\Ahat_\infty=0$ and $|\lambda^\infty|^2 = m^{-1}$, we obtain 
\[ \frac{1}{n-1}+\eta \leq \frac{1}{m}\leq\frac{1}{n-2+\alpha}.\]
It follows that $n-2<m <n-1$, which is absurd. We conclude that our initial assumption was false; that is, \eqref{eq:poinc_F_lower} holds in $U_{\alpha,\eta}$ at points where $\mn\neq 0$. Recalling \eqref{eq: E to F} then yields
\[
\gamma W^3
\le K^3+\vert E\vert^2+CW^{\frac{5}{2}}\vert\Ahat\vert\vts,
\]
where $C=C(n,\alpha)$. Applying the hypothesis \eqref{eq:codimension hypothesis} and Young's inequality then yields
\[
\gamma W^3\le K^3+\vert E\vert^2
\]
with $\gamma$ taking a smaller value and depending now also on $\delta$ and $\Lambda$.

On the other hand, wherever $\mn=0$,
\[
W^3=K^3\le \vert E\vert^2+K^3\vts.
\]
We conclude that
\[
\gamma W^3\le \vert E\vert^2+K^3
\]
at all points of $U_{\alpha,\eta}$, where $\gamma=\gamma(n,\alpha,\eta,\delta,\Lambda)$. Thus,
\bann
\gamma\!\int\! u^2Wd\mu\leq{}& \int  \frac{u^2}{W^2}\left(\vert E\vert^2+K^3\right)d\mu\\
\leq{}& \int  u^2\left(\frac{\vert E\vert^2}{W^2}+K\right)d\mu\vts.
\eann
We now estimate, using Simons' equation and the divergence theorem,
\bann
\int \frac{u^2}{W^2}\vert E\vert^2d\mu={}& \int\frac{u^2}{W^{2}}E\ast\nabla^2\sff \,d\mu\\
={}&\int\frac{u^2}{W^{2}}\!\left(\frac{\nabla u}{u}\ast E+\frac{\nabla W}{W}\ast E+\nabla E\right)\!\ast\nabla\sff\,d\mu\\
\leq{}&C\int \frac{u^2}{W^{2}}\left(W^{\frac{3}{2}}\frac{|\nabla u|}{u}+W^{\frac{1}{2}}|\nabla W|+W|\nabla\sff|\right)|\nabla\sff|\,d\mu\\
\leq{}&C\int u^2\left(\frac{|\nabla u|}{u}+\frac{|\nabla\sff|}{W^{\frac{1}{2}}}\right)\frac{|\nabla\sff|}{W^{\frac{1}{2}}}\,d\mu\vts,
\eann
where $C$ depends only on $n$, $\alpha$ and $\eta$. This completes the proof.
\end{proof}





\subsection{Notation}\mbox{}

Before moving on, let us review our notation for the tensors which will appear frequently in the sequel.\bigskip

\begin{longtable}[l]{ll}
$\nor$ & Principal normal vector\\ 
$\sff$ & Second fundamental form\\ 
$\sffN$ & Principal normal component of $\sff$\\ 
$\Ahat$ & Conormal projection of $\sff$\\
$\Wein$ & Weingarten tensor\\ 
$\mn$ & Mean curvature vector\\ 
$\mnN$ & Mean curvature scalar\\
$\mathring\sff$ & Trace-free part of $\sff$\\
$\mathring\sffN$ & Trace-free part of $\sffN$\\ 
$\torsionN$ & Torsion (scalar valued)\\
$\torsionhat$ & Torsion (conormal valued)\\
\end{longtable}

\section{The key estimates for smooth flows}

In this section, we obtain the key estimates in the setting of smooth flows. In the following section, we will obtain extensions of appropriately modified versions of these estimates for surgically modified flows.

\subsection{Preserving quadratic pinching} \mbox{}

If $X_0:M^n\to S^{n+\ell}_K$ satisfies \eqref{eq:strict quadratic pinching}, then we can find $\alpha\in(\alpha_n,1)$ such that
\begin{equation}\label{eq:uniform quadratic pinching}
\vert\sff\vert^2-\frac{1}{n-2+\alpha}\vert\mn\vert^2-2(2-\alpha)K\le 0\vts,
\end{equation}
where
\begin{equation}\label{eq:alpha_n}
\alpha_n\doteqdot \max\{2-\tfrac{n}{4},0\}=\begin{cases} \frac{3}{4} & \text{if}\;\; n=5\\ \frac{1}{2} & \text{if}\;\; n=6\\ \frac{1}{4}  & \text{if}\;\; n=7\\  0 & \text{if}\;\; n\ge 8\vts.\end{cases}
\end{equation}
We will show that this is preserved under mean curvature flow. In fact, we will prove (more generally) that the condition
\begin{equation}\label{eq:uniform quadratic pinching general}
\vert\sff\vert^2-\frac{1}{n-m+\alpha}\vert\mn\vert^2-2(m-\alpha)K\le 0
\end{equation}
is preserved for any integer $m\ge 1$ and any $\alpha\in[0,1)$ so long as $m-\alpha\le \frac{n}{4}$.

\begin{proposition}[Quadratic pinching is preserved]\label{prop:preserving pinching}
Let $X:M^n\times[0,T)\to S^{n+\ell}_K$ be a solution to mean curvature flow such that \eqref{eq:uniform quadratic pinching} holds on $M^n\times\{0\}$ for some integer $m\ge 1$ and some $\alpha\in[0,1)$. If $m-\alpha\le \frac{n}{4}$, then \eqref{eq:uniform quadratic pinching} holds on $M^n\times\{t\}$ for all $t\in[0,T)$.
\end{proposition}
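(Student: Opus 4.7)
The plan is to apply the parabolic maximum principle directly to the scalar pinching function
\[
f \doteqdot \vert\sff\vert^2 - \tfrac{1}{n-m+\alpha}\vert\mn\vert^2 - 2(m-\alpha)K\vts,
\]
for which \eqref{eq:uniform quadratic pinching general} reads $f \le 0$. The first step is to combine the evolution equations \eqref{eq:evolve A squared} and \eqref{eq:evolve H squared} (and $(\partial_t-\Delta)K=0$) to obtain
\bann
(\partial_t - \Delta) f ={}& -2\vert\cd\sff\vert^2 + \tfrac{2}{n-m+\alpha}\vert\cd^\perp\mn\vert^2\\
&+ 2\vert\inner{\sff}{\sff}^\top\vert^2 + 2\vert\mathring{\sff}\wedge\mathring{\sff}\vert^2 - \tfrac{2}{n-m+\alpha}\vert\Wein(\,\cdot\,,\mn)\vert^2\\
&- 2nK\vert\sff\vert^2 + 4K\vert\mn\vert^2 - \tfrac{2nK}{n-m+\alpha}\vert\mn\vert^2\vts.
\eann

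The gradient terms on the first line are easily handled: the Kato inequality \eqref{eq:Kato inequality} yields
\[
-2\vert\cd\sff\vert^2 + \tfrac{2}{n-m+\alpha}\vert\cd^\perp\mn\vert^2 \le 2\!\left(\tfrac{1}{n-m+\alpha}-\tfrac{3}{n+2}\right)\!\vert\cd^\perp\mn\vert^2\vts,
\]
which is non-positive whenever $m-\alpha \le (2n-2)/3$, a condition strictly weaker than $m-\alpha\le n/4$ for $n\ge 2$. For the ambient-curvature ($K$-linear) terms, substituting the identity $\vert\sff\vert^2 = f + \tfrac{\vert\mn\vert^2}{n-m+\alpha} + 2(m-\alpha)K$ into $-2nK\vert\sff\vert^2$ and collecting collapses the third line to
\[
-2nK\vts f - 4(m-\alpha)K\!\left(\tfrac{\vert\mn\vert^2}{n-m+\alpha} + nK\right)\!,
\]
whose second summand is manifestly non-positive since $m-\alpha\ge 0$.

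What remains — and what I expect to be the main obstacle — is the algebraic pointwise inequality
\[
\vert\inner{\sff}{\sff}^\top\vert^2 + \vert\mathring{\sff}\wedge\mathring{\sff}\vert^2 \le \tfrac{1}{n-m+\alpha}\vert\Wein(\,\cdot\,,\mn)\vert^2
\]
at each point where $f=0$. This is the higher-codimension quartic reaction estimate, analogous to \cite[Lemma 3.1]{BakerNguyen2ElectricBoogaloo}, and it is precisely here that the bound $m-\alpha\le n/4$ is invoked. I would prove it by diagonalising the principal-normal component $\sffN$; combining the Ricci identity \eqref{eq:Ricci wedge trace free B} with sharp pointwise bounds for $\vert\inner{\sff}{\sff}^\top\vert^2$ and $\vert\mathring{\sff}\wedge\mathring{\sff}\vert^2$ in terms of $\vert\sffN\vert^2$, $\vert\Ahat\vert^2$ and $\vert\mn\vert^2$; and finally absorbing the leftover quartic contributions against $\tfrac{1}{n-m+\alpha}\vert\mn\vert^2\vert\sff\vert^2$ using the equality $\vert\sff\vert^2 = \tfrac{\vert\mn\vert^2}{n-m+\alpha} + 2(m-\alpha)K$ together with $m-\alpha\le n/4$.

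With these three ingredients assembled, the evolution reduces to $(\partial_t - \Delta)f \le -2nKf$ wherever $f\ge 0$. The parabolic maximum principle — applied directly when $M$ is compact, or by a cutoff argument exploiting properness of the immersion together with the Bernstein estimates of Proposition \ref{prop:Bernstein} otherwise — then yields preservation of \eqref{eq:uniform quadratic pinching general} throughout $[0,T)$.
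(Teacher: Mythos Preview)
Your setup and the handling of the gradient and $K$-linear terms are correct, but the core algebraic step is wrong: the claimed reaction inequality
\[
\vert\inner{\sff}{\sff}^\top\vert^2 + \vert\mathring{\sff}\wedge\mathring{\sff}\vert^2 \le \tfrac{1}{n-m+\alpha}\vert\Wein(\,\cdot\,,\mn)\vert^2
\]
is simply \emph{false} on the set $\{f=0\}$. Already in codimension one (where $\mathring\sff\wedge\mathring\sff=0$, $\vert\inner{\sff}{\sff}^\top\vert^2=\vert\sff\vert^4$, and $\vert\Wein(\cdot,\mn)\vert^2=\vert\mn\vert^2\vert\sff\vert^2$) it reads $\vert\sff\vert^2\le\tfrac{1}{n-m+\alpha}\vert\mn\vert^2$, which contradicts $f=0$ whenever $K>0$. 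At points with $\mn=0$ the right-hand side vanishes while the left does not. The mistake is structural: by discarding the negative $K$-contribution $-4(m-\alpha)K\bigl(\tfrac{\vert\mn\vert^2}{n-m+\alpha}+nK\bigr)$ before treating the reaction terms, you have thrown away exactly the piece needed to absorb the excess in the quartic terms.

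The paper's proof never attempts such a separation. Instead, after applying the estimates \eqref{eq:AnBa reaction 1}--\eqref{eq:AnBa reaction 2}, it substitutes $\tfrac{1}{n}\mnN^2=\tfrac{1}{an-1}\bigl(\vert\mathring\sffN\vert^2+\vert\Ahat\vert^2-2Q-bK\bigr)$ back into the reaction terms to \emph{extract a factor of $Q$}, leaving a residual quadratic form in $\vert\Ahat\vert^2$ and $K$. The hypothesis $m-\alpha\le n/4$ enters precisely through the sign of this quadratic form's discriminant. The cases $\mn\ne 0$ and $\mn=0$ must also be handled separately, since the principal-conormal decomposition is unavailable at the latter. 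Your outline does not contain this mechanism, and the inequality you planned to prove cannot be salvaged as stated.
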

\begin{proof}
Given positive numbers $a$ and $b$, consider the function
\[
Q\doteqdot \frac{1}{2}\left(\vert \sff\vert^2-a\vert \mn \vert^2-bK\right)\,.
\]
By \eqref{eq:evolve A squared} and \eqref{eq:evolve H squared},
\bann
(\pd_t-\Delta)Q={}&\vert{\mathring\sff\wedge\mathring\sff}\vert^2+\vert{\inner{\sff}{\sff}^\top}\vert^2-a\vert\Wein(\,\cdot\,,\mn)\vert^2+nK\big(\vert \sff\vert^2-a\vert \mn\vert^2\big)-2nK\vert\mathring\sff\vert^2\\
{}&-\big(\vert\cd\sff\vert^2-a\vert \cd^\perp \mn\vert^2\big)\vts.
\eann

At a point where $\mn\neq 0$, decomposing $\sff$ into its irreducible components yields \cite{AnBa10}
\[
\vert\sff\vert^2
=\vert\mathring{\sffN}\vert^2+\tfrac{1}{n}\mnN^2+\vert\Ahat\vert^2\vts,
\]
\[
\vert\Wein(\cdot,\mn)\vert^2
=\vert\mathring{\sffN}\vert^2\mnN^2+\tfrac{1}{n}\mnN^4\vts,
\]
\[
\vert{\mathring\sff\wedge\mathring\sff}\vert^2=2\vert (\nor\otimes\mathring{\sffN})\wedge \Ahat\vert^2+\vert\Ahat\wedge \Ahat\vert^2\vts,
\]
and
\bann
\vert{\inner{\sff}{\sff}^\top}\vert^2={}&\big\vert\big\langle \nor\otimes\mathring{\sffN}+\tfrac{1}{n}\mnN \nor g^\top+\Ahat,\nor\otimes\mathring{\sffN}+\tfrac{1}{n}\mnN \nor g^\top+\Ahat\big\rangle\big\vert\\
={}&\big(\vert\mathring{\sffN}\vert^2+\tfrac{1}{n}\mnN^2\big)^2+2\big\vert\big\langle\nor\otimes\mathring{\sffN},\Ahat\big\rangle^\top\big\vert^2+\vert{\big\langle\Ahat,\Ahat\big\rangle^{\!\top}}\vert^2\vts.
\eann
Applying the estimates \cite[p. 372]{AnBa10}
\begin{equation}\label{eq:AnBa reaction 1}
\vert \nor\otimes\mathring{\sffN}\wedge \Ahat\vert^2+\big\vert\big\langle\mathring{\sffN},\Ahat\big\rangle^\top\big\vert^2\le 2\vert \mathring{\sffN}\vert^2\vert\Ahat\vert^2
\end{equation}
and \cite[Proposition 3]{LiLi} (cf. \cite[Lemma 1]{ChernDoCarmoKobayashi})
\begin{equation}\label{eq:AnBa reaction 2}
\vert\Ahat\wedge\Ahat\vert^2+\vert{\big\langle\Ahat,\Ahat\big\rangle^{\!\top}}\vert^2\le\tfrac{3}{2}\vert\Ahat\vert^4
\end{equation}
now yields
\bann
\vert{\mathring\sff\wedge\mathring\sff}\vert^2+\vert{\inner{\sff}{\sff}^\top}\vert^2
\le{}&4\vert \mathring{\sffN}\vert^2\vert\Ahat\vert^2+\tfrac{3}{2}\vert\Ahat\vert^4+\big(\vert\mathring{\sffN}\vert^2+\tfrac{1}{n}\mnN^2\big)^2\\
={}&3\vert \mathring{\sffN}\vert^2\vert\Ahat\vert^2+\tfrac{3}{2}\vert\Ahat\vert^4+\big(\vert\mathring{\sffN}\vert^2+\tfrac{1}{n}\mnN^2\big)\vert\sff\vert^2-\tfrac{1}{n}\vert\Ahat\vert^2\mnN^2
\eann
and hence
\bann
(\pd_t-\Delta)Q={}&\vert{\mathring\sff\wedge\mathring\sff}\vert^2+\vert{\inner{\sff}{\sff}^\top}\vert^2-a\big(\vert\mathring{\sffN}\vert^2+\tfrac{1}{n}\mnN^2\big)\mnN^2+nK\big(2Q+bK\big)-2nK\vert\mathring\sff\vert^2\\
{}&-\big(\vert\cd\sff\vert^2-a\vert \cd^\perp \mn\vert^2\big)\\
\le{}&3\vert \mathring{\sffN}\vert^2\vert\Ahat\vert^2+\tfrac{3}{2}\vert\Ahat\vert^4-\tfrac{1}{n}\vert\Ahat\vert^2\mnN^2-2nK\big(\vert\mathring\sffN\vert^2+\vert\Ahat\vert^2\big)\\
{}&+\big(\vert\mathring{\sffN}\vert^2+\tfrac{1}{n}\mnN^2+nK\big)\big(2Q+bK\big)-\big(\vert\cd\sff\vert^2-a\vert \cd^\perp \mn\vert^2\big)\\
={}&\big(3\vert \mathring{\sffN}\vert^2+\tfrac{3}{2}\vert\Ahat\vert^2-\tfrac{1}{n}\mnN^2-bK\big)\vert\Ahat\vert^2\\
{}&+bK\big(\vert\mathring{\sffN}\vert^2+\vert\Ahat\vert^2+\tfrac{1}{n}\mnN^2+nK\big)-2nK\big(\vert\mathring\sffN\vert^2+\vert\Ahat\vert^2\big)\\
{}&+2Q\big(\vert\mathring{\sffN}\vert^2+\tfrac{1}{n}\mnN^2+nK\big)-\big(\vert\cd\sff\vert^2-a\vert \cd^\perp \mn\vert^2\big)\vts.
\eann

Rewriting
\[
\vert\sff\vert^2 = 2Q+a\mnN^2+bK\vts,
\]
we obtain 
\bann
bK\big(\vert\mathring{\sffN}\vert^2+\vert\Ahat\vert^2+\tfrac{1}{n}\mnN^2+nK\big)={}& bK(2Q+a\mnN^2+(b+n)K)
\eann
and
\bann
-2nK\big(\vert\mathring\sffN\vert^2+\vert\Ahat\vert^2\big)={}&-2nK\big(2Q+\big(a-\tfrac{1}{n}\big)\mnN^2+bK\big)\vts,
\eann
and hence
\bann
bK\big(\vert\mathring{\sffN}\vert^2+\vert\Ahat\vert^2+\tfrac{1}{n}\mnN^2+nK\big)-{}&2nK\big(\vert\mathring\sffN\vert^2+\vert\Ahat\vert^2\big)\\
={}&-2K(2n-b)Q+(a(b-2n)+2)K\mnN^2+b(b-n)K^2.
\eann

Similarly, using 
\[
\tfrac{1}{n}\mnN^2=\tfrac{1}{an-1}\big(\vert\mathring{\sffN}\vert^2+\vert\Ahat\vert^2-2Q-bK\big)
\]
we find 
\bann
3\vert \mathring{\sffN}\vert^2+\tfrac{3}{2}\vert\Ahat\vert^2-\tfrac{1}{n}\mnN^2-bK={}&3\vert \mathring{\sffN}\vert^2+\tfrac{3}{2}\vert\Ahat\vert^2-\tfrac{1}{an-1}\big(\vert\mathring{\sffN}\vert^2+\vert\Ahat\vert^2-2Q-bK\big)-bK\\
={}&\big(3-\tfrac{1}{an-1}\big)\vert \mathring{\sffN}\vert^2+\big(\tfrac{3}{2}-\tfrac{1}{an-1}\big)\vert\Ahat\vert^2+\tfrac{2}{an-1}Q-\big(1-\tfrac{1}{an-1}\big)bK\vts.
\eann
If $a\leq\tfrac{4}{3n}$, then the first term on the right is nonpositive. Discarding this term and putting things back together, we arrive at
\bann
(\pd_t-\Delta)Q\leq{}&\big(\tfrac{3}{2}-\tfrac{1}{an-1}\big)\vert\Ahat\vert^4-\big(1-\tfrac{1}{an-1}\big)bK\vert\Ahat\vert^2+b(b-n)K^2\\
&+(a(b-2n)+2)K\mnN^2+2Q\big(\vert\mathring{\sffN}\vert^2+\tfrac{1}{an-1}\vert\Ahat\vert^2+\tfrac{1}{n}\mnN^2+(b-n)K\big)\\
&-\big(\vert\cd\sff\vert^2-a\vert \cd^\perp \mn\vert^2\big)\vts.
\eann

Observe that if
\[
a=\frac{1}{n-m+\alpha}\;\;\text{and}\;\; b=2(m-\alpha)
\]
for some $m$  and $\alpha\in(0,1)$ such that $m-\alpha\le \frac{n}{4}$ (which ensures that  $a\leq\tfrac{4}{3n}$), then
\[
a(b-2n)+2 = 0\;\; \text{and}\;\; \big(1-\tfrac{1}{an-1}\big)b=2(b-n).
\]
We arrive at
\bann
(\tfrac{3}{2}-\tfrac{1}{an-1}\big)\vert\Ahat\vert^4-\big(1-\tfrac{1}{an-1}\big)bK\vert\Ahat\vert^2+b(b-n)K^2={}&(\tfrac{3}{2}-\tfrac{1}{an-1}\big)\vert\Ahat\vert^4-2(b-n)K\vert\Ahat\vert^2\\
&+b(b-n)K^2. 
\eann
The discriminant of the quadratic form on the right is $3(b-n)(\tfrac{n}{3}-m+\alpha)$. Since $m-\alpha \leq \tfrac{n}{4}$ and $b=2(m-\alpha)$, this is strictly negative. 
Thus,
\ba\label{eq:Q reaction term estimate}
(\pd_t-\Delta)Q\leq{}&2Q\big(\vert\mathring{\sffN}\vert^2+\tfrac{1}{an-1}\vert\Ahat\vert^2+\tfrac{1}{n}\mnN^2+(b-n)K\big)-\big(\vert\cd\sff\vert^2-a\vert \cd^\perp \mn\vert^2\big)\vts.
\ea
Since $n\ge 1+\frac{2(m-\alpha)}{3}$, the gradient terms can be discarded using the Kato inequality.

On the other hand, wherever $\mn =0$,
\bann
(\pd_t-\Delta)Q={}&\vert\mathring{\sff}\wedge \mathring{\sff}\vert^2+\big\vert\big\langle\mathring\sff,\mathring\sff\big\rangle^\top\big\vert^2-nK\vert\mathring\sff\vert^2-\big(\vert\cd\sff\vert^2-a\vert \cd^\perp \mn\vert^2\big)\vts.
\eann
So \cite[Proposition 3]{LiLi} yields
\bann
(\pd_t-\Delta)Q\le{}&\big(\tfrac{3}{2}\vert\mathring{\sff}\vert^2-nK\big)\vert\mathring\sff\vert^2-\big(\vert\cd\sff\vert^2-a\vert \cd^\perp \mn\vert^2\big)\\
={}&3\vert\mathring{\sff}\vert^2Q+\big(\tfrac{3}{2}b-n\big)K\vert\mathring\sff\vert^2-\big(\vert\cd\sff\vert^2-a\vert \cd^\perp \mn\vert^2\big)\vts.
\eann
Since $b=2(m-\alpha)<\frac{n}{2}$, the Kato inequality then yields
\bann
(\pd_t-\Delta)Q\le{}&3\vert\mathring{\sff}\vert^2Q\vts.
\eann
We conclude that
\bann
(\pd_t-\Delta)Q\le{}&fQ
\eann
everywhere for some locally bounded function $f$, at which point we may conclude that non-positivity of $Q$ is preserved.
\end{proof}

We can use the preservation of pinching to obtain an estimate for the trace-free second fundamental form which improves at large times, using the maximum principle.
\begin{proposition}\label{prop:cylindrical decay}
Let $X:M\times[0,T)\to S_K^{n+\ell}$ be a solution to mean curvature flow with initial condition in satisfying \eqref{eq:uniform quadratic pinching general} with $m-\alpha<\frac{n}{4}$. There is a constant $C=C(n,m-\alpha)$ such that 
\begin{equation}
\frac{\vert\sff\vert^2-\frac{1}{n}\vert\mn\vert^2}{\vert\mn\vert^2+K}\le C\mathrm{e}^{-2Kt}\vts.
\end{equation}
\end{proposition}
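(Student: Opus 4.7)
The plan is to apply the parabolic maximum principle to the function
\[
\phi_C\doteqdot \vert\mathring\sff\vert^2-C\mathrm{e}^{-2Kt}(\vert\mn\vert^2+K)
\]
and show that $\phi_C\leq 0$ on $M\times[0,T)$ for a constant $C=C(n,m-\alpha)$ chosen sufficiently large. The preserved pinching \eqref{eq:uniform quadratic pinching general} supplied by Proposition \ref{prop:preserving pinching} yields the algebraic bound $\vert\mathring\sff\vert^2\leq\tfrac{m-\alpha}{n(n-m+\alpha)}\vert\mn\vert^2+2(m-\alpha)K\leq C_0(\vert\mn\vert^2+K)$ for some $C_0=C_0(n,m-\alpha)$, so any $C\geq C_0$ makes $\phi_C\leq 0$ at $t=0$.

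To propagate this, combine \eqref{eq:evolve A squared} and \eqref{eq:evolve H squared} to obtain
\bann
(\partial_t-\Delta)\vert\mathring\sff\vert^2={}&2\big(\vert\langle\sff,\sff\rangle^\top\vert^2+\vert\mathring\sff\wedge\mathring\sff\vert^2\big)-\tfrac{2}{n}\vert\Wein(\cdot,\mn)\vert^2\\
&-2nK\vert\mathring\sff\vert^2-2\vert\cd\sff\vert^2+\tfrac{2}{n}\vert\cd^\perp\mn\vert^2,\\
(\partial_t-\Delta)(\vert\mn\vert^2+K)={}&2\vert\Wein(\cdot,\mn)\vert^2+2nK\vert\mn\vert^2-2\vert\cd^\perp\mn\vert^2.
\eann
At a first touching point $(p_0,t_0)$ of $\phi_C$ with $0$, I would substitute $\vert\mathring\sff\vert^2=C\mathrm{e}^{-2Kt_0}(\vert\mn\vert^2+K)$ and decompose $\sff$ into its irreducible pieces $\sff=\tfrac{1}{n}\nor\otimes\mn\,g^\top+\nor\otimes\mathring\sffN+\Ahat$. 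Applying the Li--Li and Andrews--Baker estimates \eqref{eq:AnBa reaction 1}--\eqref{eq:AnBa reaction 2} together with the identity $\vert\Wein(\cdot,\mn)\vert^2=\vert\mathring\sffN\vert^2\vert\mn\vert^2+\tfrac{1}{n}\vert\mn\vert^4$, the dangerous terms of order $\vert\mn\vert^4$ cancel against the $\tfrac{2}{n}\vert\mn\vert^2\vert\sff\vert^2$ contribution (exactly as in the umbilic limit), leaving a reactive bound of the form $(9\vert\mathring\sff\vert^2+2\vert\sff\vert^2)\vert\mathring\sff\vert^2$.

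The main obstacle is that this remaining reactive bound is of order $\vert\sff\vert^2\vert\mathring\sff\vert^2\sim(\vert\mn\vert^2+K)^2\cdot C\mathrm{e}^{-2Kt}$ at the touching set, while the direct ``ambient curvature'' good term $-2nK\vert\mathring\sff\vert^2$ is only linear in $\vert\mn\vert^2+K$. To absorb the surplus I would pair the reactive piece $\tfrac{2}{n-(m-\alpha)}\vert\mn\vert^2\vert\mathring\sff\vert^2$ (obtained by inserting $\vert\sff\vert^2\leq\tfrac{1}{n-(m-\alpha)}\vert\mn\vert^2+2(m-\alpha)K$) against the negative contribution $-2C\mathrm{e}^{-2Kt}\vert\Wein(\cdot,\mn)\vert^2\leq -\tfrac{2C}{n}\mathrm{e}^{-2Kt}\vert\mn\vert^4$ coming from $-C\mathrm{e}^{-2Kt}(\partial_t-\Delta)(\vert\mn\vert^2+K)$. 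The strict inequality $m-\alpha<\tfrac{n}{4}$ enters through exactly the discriminant analysis of Proposition \ref{prop:preserving pinching}: the resulting quadratic form in $(\vert\mathring\sff\vert^2,\vert\mn\vert^2)$ has strictly negative discriminant, so its value is non-positive.

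Finally, the gradient surplus $-2\vert\cd\sff\vert^2+\big(\tfrac{2}{n}+2C\mathrm{e}^{-2Kt}\big)\vert\cd^\perp\mn\vert^2$ is controlled via the Kato inequality \eqref{eq:Kato inequality}; if the choice of $C$ needed for the initial bound conflicts with the size restriction for the Kato absorption, one first runs the argument on $[t_1,T)$ for a small $t_1>0$, using the Bernstein estimate (Proposition \ref{prop:Bernstein}) to control the initial layer $[0,t_1]$ directly. After these absorptions one obtains $(\partial_t-\Delta)\phi_C\leq f\phi_C$ at any touching point of $\{\phi_C=0\}$ for some locally bounded $f$ depending on $\vert\sff\vert$ and $K$, and the parabolic maximum principle then propagates $\phi_C\leq 0$ from $t=0$ to all $t\in[0,T)$, which is the claimed exponential decay.
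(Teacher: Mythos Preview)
Your approach differs substantially from the paper's, and as written it contains a genuine gap in both the reaction and gradient steps.

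\textbf{Reaction terms.} Your proposed absorption pairs the bad piece $\tfrac{2}{n-m+\alpha}\vert\mn\vert^{2}\vert\mathring\sff\vert^{2}$ against $-\tfrac{2C}{n}\mathrm{e}^{-2Kt}\vert\mn\vert^{4}$. At a touching point $C\mathrm{e}^{-2Kt}=\tfrac{\vert\mathring\sff\vert^{2}}{\vert\mn\vert^{2}+K}$, so the latter equals $\tfrac{2}{n}\tfrac{\vert\mn\vert^{4}}{\vert\mn\vert^{2}+K}\vert\mathring\sff\vert^{2}\le\tfrac{2}{n}\vert\mn\vert^{2}\vert\mathring\sff\vert^{2}$; but $\tfrac{1}{n-m+\alpha}>\tfrac{1}{n}$, so the good term is \emph{strictly smaller} than the bad one for every value of $\vert\mn\vert$. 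No discriminant argument in $(\vert\mathring\sff\vert^{2},\vert\mn\vert^{2})$ repairs this: the discriminant computation in Proposition~\ref{prop:preserving pinching} concerns a quadratic in $(\vert\Ahat\vert^{2},K)$, not in the variables you name, and the structural mismatch is that the evolution of $\vert\mn\vert^{2}+K$ sees no $\vert\Ahat\vert^{2}$ contribution while that of $\vert\mathring\sff\vert^{2}$ does.

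\textbf{Gradient terms.} The surplus $+2C\mathrm{e}^{-2Kt}\vert\cd^{\perp}\mn\vert^{2}$ is absorbed by Kato only when $C\mathrm{e}^{-2Kt}\le\tfrac{3}{n+2}-\tfrac{1}{n}=\tfrac{2(n-1)}{n(n+2)}$. The pinching gives at best $\tfrac{\vert\mathring\sff\vert^{2}}{\vert\mn\vert^{2}+K}\le C_{0}(n,m-\alpha)$ with $C_{0}$ as large as $2(m-\alpha)$ (approaching $\tfrac{n}{2}$), so the required smallness fails near $t=0$. Your Bernstein workaround does not help: running the flow to time $t_{1}$ does not improve the \emph{ratio} $\vert\mathring\sff\vert^{2}/(\vert\mn\vert^{2}+K)$ below $C_{0}$, so you still need $C\ge C_{0}\mathrm{e}^{2Kt_{1}}$ at $t_{1}$, and the Kato condition is again violated there. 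Any genuine fix via Bernstein would make $C$ depend on $K^{-1}\max_{t=0}\vert\sff\vert^{2}$, contradicting $C=C(n,m-\alpha)$.

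\textbf{What the paper does.} Instead of $\vert\mn\vert^{2}+K$ the paper uses the pinching gap $W=\tfrac12\big(\tfrac{n}{2}K+\tfrac{4}{3n}\vert\mn\vert^{2}-\vert\sff\vert^{2}\big)$ as the denominator and bounds $g/W$ with $g=\tfrac12\vert\mathring\sff\vert^{2}$. The point is that $g$ and $-W$ satisfy the \emph{same} reaction inequality $2(\,\cdot\,)\big(\vert\mathring\sffN\vert^{2}+3\vert\Ahat\vert^{2}+\tfrac1n\vert\mn\vert^{2}-cK\big)$ up to the value of $c$, so in the quotient everything cancels except a clean $-2bK$; moreover $-W$'s evolution carries a \emph{good} gradient term $+2\gamma\vert\cd\sff\vert^{2}$ (coming from Kato with the stronger coefficient $\tfrac{4}{3n}$), which absorbs the cross terms $2\langle\cd\log(g/W),\cd\log W\rangle$ without any size restriction on $g/W$. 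This is exactly what your denominator $\vert\mn\vert^{2}+K$ lacks: its evolution has neither the matching $\vert\Ahat\vert^{2}$ reaction nor a favourable gradient term (indeed $-2\vert\cd^{\perp}\mn\vert^{2}$ has the wrong sign).
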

\begin{proof}
Set
\[
g \doteqdot \frac{1}{2}\big(\vert \sff\vert^2-\tfrac{1}{n}\vert\mn\vert^2\big)\;\;\text{and}\;\;W\doteqdot \frac{1}{2}\big(bK+a\vert\mn\vert^2-\vert\sff\vert^2\big)\vts,
\]
where
\[
a=\frac{4}{3n}\;\;\text{and}\;\; b=\frac{n}{2}.
\]

Applying \eqref{eq:evolve H squared} and \eqref{eq:evolve A squared} yields (cf. Proposition \ref{prop:preserving pinching}), wherever $\mn\neq 0$,
\bann
(\pd_t-\Delta)g
\le{}&\big(3\vert \mathring{\sffN}\vert^2+\tfrac{3}{2}\vert\Ahat\vert^2-\tfrac{1}{n}\mnN^2\big)\vert\Ahat\vert^2-2nK\big(\vert\mathring\sffN\vert^2+\vert\Ahat\vert^2\big)\\
{}&+2g\big(\vert\mathring{\sffN}\vert^2+\tfrac{1}{n}\mnN^2+nK\big)-\big(\vert\cd\sff\vert^2-\tfrac{1}{n}\vert \cd^\perp \mn\vert^2\big)\vts.
\eann
Note that
\[
\vert\mathring\sffN\vert^2+\vert\Ahat\vert^2=2g
\]
so that
\[
3\vert \mathring{\sffN}\vert^2+\tfrac{3}{2}\vert\Ahat\vert^2-\tfrac{1}{n}\mnN^2=(3-\tfrac{1}{an-1})\vert \mathring{\sffN}\vert^2+(\tfrac{3}{2}-\tfrac{1}{an-1})\vert\Ahat\vert^2-\tfrac{1}{n}\mnN^2+\tfrac{2}{an-1}g\vts.
\]
The first three terms are non-positive. The remaining term is also non-positive by the Kato inequality. Thus,
\ba
(\pd_t-\Delta)g\le{}&-4nKg+2g\big(\vert\mathring{\sffN}\vert^2+\tfrac{1}{an-1}\vert\Ahat\vert^2+\tfrac{1}{n}\mnN^2+nK\big)\nonumber\\
={}&2g\big(\vert\mathring{\sffN}\vert^2+\tfrac{1}{an-1}\vert\Ahat\vert^2+\tfrac{1}{n}\mnN^2-nK\big)\vts.\label{eq:evolve g eta H nonzero}
\ea

Since
\bann
-(\pd_t-\Delta)W\le-2W\big(\vert\mathring{\sffN}\vert^2+\tfrac{1}{an-1}\vert\Ahat\vert^2+\tfrac{1}{n}\mnN^2-(n-b)K\big)-2\gamma\vert\cd\sff\vert^2\vts,
\eann
where
\[
2\gamma=1-\tfrac{n+2}{3}a>0\vts,
\]
we obtain
\bann
\frac{(\pd_t-\Delta)\frac{g}{W}}{\frac{g}{W}}={}&\frac{(\pd_t-\Delta)g}{g}-\frac{(\pd_t-\Delta)W}{W}+2\inner{\cd\log\frac{g}{W}}{\cd\log W}\\
\le{}&-2bK-2\gamma\frac{\vert\cd\sff\vert^2}{W}+2\inner{\cd\log\frac{g}{W}}{\cd\log W}
\eann
wherever $\mn\neq 0$.

On the other hand, wherever $\mn =0$,
\bann
-(\pd_t-\Delta)W\le{}&-3\vert\mathring{\sff}\vert^2W
\eann
and
\ba\label{eq:evolve g eta H zero}
(\pd_t-\Delta)g\le{}&\big(3\vert\mathring{\sff}\vert^2-2K\big)g\vts,
\ea
and hence, at such points,
\bann
\frac{(\pd_t-\Delta)\frac{g}{W}}{\frac{g}{W}}\le{}&-2K+2\inner{\cd\log\frac{g}{W}}{\cd\log W}.
\eann

Since $b \geq 1$ we conclude that
\bann
\frac{(\pd_t-\Delta)\frac{g}{W}}{\frac{g}{W}}\le{}&-2K+2\inner{\cd\log\frac{g}{W}}{\cd\log W}
\eann
everywhere. The maximum principle now implies the claim.
\end{proof}

Proposition \ref {prop:cylindrical decay} implies, in particular, that there exists $\tau = \tau(n,m-\alpha)$ such that the inequality 
\[\vert\sff\vert^2 - \tfrac{1}{n-1} \mnN^2 - 2K < 0\]
holds for each $t \in(0,T)\cap[\tau K^{-1},\infty)$ on any solution initially satisfying \eqref{eq:uniform quadratic pinching general}. If $T > \tau K^{-1}$, this means that at time $\tau K^{-1}$ the solution satisfies the hypotheses of \cite[Main Theorem 7]{BakerThesis}. Consequently, the solution either exists forever and converges to a totally geodesic submanifold as $t \to \infty$, or else contracts to a round point in finite time. 

We also find that the only quadratically pinched ancient solutions are the totally umbilic ones (cf. \cite{ChernDoCarmoKobayashi,LaNgPinching,LiLi,LyNg}).

\begin{theorem}
Let $X:M\times(-\infty,0)\to S_K^{n+\ell}$ be a proper ancient solution to mean curvature flow. If
\[
\limsup_{t\to-\infty}\left(\vert\sff\vert^2-\tfrac{4}{3n}\vert\mn\vert^2-\tfrac{n}{2}K\right)<0\vts,
\]
then $X(M,t)$ is totally umbilic for each $t$, and hence, up to a rotation, the solution is either a stationary hyperequator or a shrinking hyperparallel in $S_K^{n+1}\hookrightarrow S_K^{n+\ell}$.
\end{theorem}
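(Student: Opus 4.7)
My plan is to combine preservation of pinching with the cylindrical decay argument of Proposition~\ref{prop:cylindrical decay} to force every time slice to be totally umbilic, and then invoke the classification of totally umbilic submanifolds in space forms. The limsup hypothesis produces $T_\ast < 0$ and $\epsilon > 0$ such that $W \doteqdot \tfrac{1}{2}\big(\tfrac{n}{2}K + \tfrac{4}{3n}|\mn|^2 - |\sff|^2\big) \ge \epsilon/2$ on $M \times (-\infty, T_\ast]$. Applying Proposition~\ref{prop:preserving pinching} with an integer $m \ge 1$ and $\alpha \in [0,1)$ chosen so that $m - \alpha = n/4$ then preserves $W \ge 0$ on all of $(-\infty, 0)$.

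Fix $t \in (-\infty, 0)$ and any $t_0 \le \min(t, T_\ast)$. Setting $g \doteqdot \tfrac{1}{2}(|\sff|^2 - |\mn|^2/n) \ge 0$, the computation in the proof of Proposition~\ref{prop:cylindrical decay} (carried out at the boundary values $a = 4/(3n)$, $b = n/2$) combined with the maximum principle gives
\[
\max_M \tfrac{g}{W}(\cdot, t) \le \max_M \tfrac{g}{W}(\cdot, t_0) \cdot e^{-2K(t - t_0)}.
\]
Since $g + W = \tfrac{n}{4}K + \tfrac{1}{6n}|\mn|^2$ and $W(\cdot, t_0) \ge \epsilon/2$, we have $\max_M (g/W)(\cdot, t_0) \le C(n)\epsilon^{-1}\big(K + \sup_M |\mn(\cdot,t_0)|^2\big)$. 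To control the supremum of $|\mn|^2$ as $t_0 \to -\infty$, I would use \eqref{eq:evolve H squared} together with the elementary bound $|\Wein(\cdot,\mn)|^2 \le |\sff|^2 |\mn|^2$ and the pinching to obtain $(\pd_t - \Delta)|\mn|^2 \le C(n)(|\mn|^2 + K)|\mn|^2$. A Riccati ODE analysis applied to $f(t) \doteqdot \sup_M |\mn|^2(\cdot, t)$ (justified by properness of the flow together with the pinching, which bounds $|\sff|$ in terms of $|\mn|$) shows that $f \to 0$ exponentially as $t \to -\infty$. Hence $\max_M (g/W)(\cdot, t_0) \cdot e^{-2K(t-t_0)} \to 0$, so $g(\cdot,t) \equiv 0$ and $X(\cdot, t)$ is totally umbilic.

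To finish, recall that a totally umbilic $n$-dimensional submanifold of $S^{n+\ell}_K$ is contained in a totally geodesic $(n+1)$-subsphere $S^{n+1}_K \hookrightarrow S^{n+\ell}_K$, and the mean curvature vector lies tangent to this subsphere, so the containment is preserved under mean curvature flow and the ambient subsphere is independent of $t$. In codimension one, $X(\cdot, t)$ is either a hyperequator (totally geodesic, stationary under MCF) or a geodesic hypersphere (a hyperparallel, whose backward MCF evolution exists for all negative time and is asymptotic to the equator as $t \to -\infty$). Up to an ambient rotation of $S^{n+\ell}_K$ these are the only possibilities. The main obstacle is applying cylindrical decay at the boundary value $m-\alpha = n/4$: in Proposition~\ref{prop:cylindrical decay} the universal constant comes from a lower bound $W \ge C(m-\alpha)(|\mn|^2 + K)$ that fails at the boundary, where only $W \ge \epsilon/2$ is available. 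The Riccati-type decay of $|\mn|^2$ on the ancient solution is precisely what compensates for the resulting loss.
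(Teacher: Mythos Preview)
Your overall plan matches what the paper has in mind (the paper states the theorem without proof, citing \cite{ChernDoCarmoKobayashi,LaNgPinching,LiLi,LyNg}): run the differential inequality underlying Proposition~\ref{prop:cylindrical decay} at the boundary values $a=\tfrac{4}{3n}$, $b=\tfrac{n}{2}$, obtain $\max_M(g/W)(t)\le \max_M(g/W)(t_0)\,e^{-2K(t-t_0)}$, and let $t_0\to-\infty$. You also correctly isolate the only real obstruction: at the boundary $m-\alpha=\tfrac{n}{4}$ the universal bound $g/W\le C(n,m-\alpha)$ (which in Proposition~\ref{prop:cylindrical decay} comes from $W\ge c(|\mn|^2+K)$) is unavailable, so one must control $\max_M|\mn|^2(\cdot,t_0)$ as $t_0\to-\infty$.

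Your Riccati step, however, does not work. The inequality $(\pd_t-\Delta)|\mn|^2\le C(|\mn|^2+K)|\mn|^2$ yields only an \emph{upper} bound $f'\le C(f+K)f$ for $f=\sup_M|\mn|^2$. An upper bound on $f'$ constrains forward growth but says nothing about backward decay: any constant function satisfies $0\le C(f+K)f$, so the inequality alone cannot force $f(t_0)\to0$. Backward decay would require a \emph{lower} bound such as $f'\ge cKf$; while \eqref{eq:evolve H squared} contains the favourable term $+2nK|\mn|^2$, the term $-2|\cd^\perp\mn|^2$ cannot be dropped at a spatial extremum in high codimension (there $\cd|\mn|^2=0$ gives only $\cd^\perp\mn\perp\mn$, not $\cd^\perp\mn=0$), so no such lower Riccati bound is available. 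The arguments in the cited references avoid this issue altogether by working with a pinching quantity that is \emph{a priori} uniformly bounded by the hypothesis itself, so that the exponential factor $e^{-2K(t-t_0)}$ alone drives the limit to zero; your proof does not close without such a device.
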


\subsection{The codimension estimate}\label{sec:codimension}

Next, we obtain an estimate for $\vert\Ahat\vert$ which improves at high curvature scales for submanifolds satisfying \eqref{eq:strict quadratic pinching}. Such an estimate was obtained for high codimension mean curvature flow in Euclidean space by Naff \cite{Naff}. 
Two new difficulties need to be overcome in our setting, however: the first is the fact that our pinching condition allows the mean curvature vector to vanish at some points (at which $\Ahat$ is not defined); the second is the presence of ambient curvature terms, which need to be controlled.

In fact, the estimate holds under the condition \eqref{eq:uniform quadratic pinching general}, so long as
\[
m-\alpha<\min\left\{\frac{n}{4},\frac{n(n-1)}{3(n+1)}\right\},
\]
which when $m=2$ is implied by \eqref{eq:strict quadratic pinching}. Note that this corresponds exactly to the constants in Naff's estimate in the Euclidean setting \cite{Naff}.

\begin{proposition}[Codimension estimate (cf. \cite{Naff})]\label{prop:codimension estimate}
Let $X:M^n\times[0,T)\to S^{n+\ell}_K$ be a solution to mean curvature flow such that \eqref{eq:uniform quadratic pinching general} holds on $M^n\times\{0\}$ for some $m\ge 1$ and $\alpha\in[0,1)$. If $m-\alpha<\min\left\{\frac{n}{4},\frac{n(n-1)}{3(n+1)}\right\}$, then

\[
\vert\Ahat\vert^2\le CK^\delta(\vert \mn\vert^2+K)^{1-\delta}\vts \;\;\text{wherever}\;\; \mn\neq 0\vts,
\] 
where $\delta>0$ depends only on $n$ and $m-\alpha$, and $C<\infty$ depends only on $n$, $m-\alpha$, and
an upper bound for $K^{-1}\max_{M\times\{0\}}\vert\sff\vert^2$.
\end{proposition}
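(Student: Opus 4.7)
The plan is to adapt Naff's Euclidean codimension estimate \cite{Naff} to the present setting. Two features absent from \cite{Naff} must be addressed: the ambient curvature contributions from $K>0$, and the possible presence of points where $\mn=0$, at which $\Ahat$ and $\nor$ are not defined. I work with the scale-invariant quantity
\[
\phi_\sigma\doteqdot\frac{\vert\Ahat\vert^2}{W^{1-\sigma}}\,,\qquad W\doteqdot\vert\mn\vert^2+K\,,
\]
defined wherever $\mn\neq 0$, and aim to show $\phi_\sigma\le CK^\sigma$ for some small $\sigma>0$. At points where $\mn=0$, the pinching hypothesis \eqref{eq:uniform quadratic pinching general} gives $\vert\Ahat\vert^2=\vert\sff\vert^2\le 2(m-\alpha)K$ directly, so the estimate holds there trivially.

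The first step is to derive a differential inequality for $\phi_\sigma$. Combining \eqref{eq:evolve tor} and \eqref{eq:evolve H squared}, and applying the algebraic inequalities \eqref{eq:AnBa reaction 1}--\eqref{eq:AnBa reaction 2}, the reaction part of $(\pd_t-\Delta)\phi_\sigma$ takes the form $\phi_\sigma\mathcal{R} - (\textrm{gradient remainder})$, where $\mathcal{R}$ collects the quadratic $\vert\sff\vert^2$-type contributions together with the ambient term $-nK$. The same algebraic reorganization used in Proposition \ref{prop:preserving pinching} (applied now to $\phi_\sigma$ in place of $Q$), in conjunction with \eqref{eq:uniform quadratic pinching general}, shows that under $m-\alpha<\min\{n/4,\vts n(n-1)/(3(n+1))\}$ the bad contributions in $\mathcal{R}$ are absorbed into the good term $-nK\vert\Ahat\vert^2$. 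The sharper bound $m-\alpha<n(n-1)/(3(n+1))$ is required precisely so that the Kato inequality \eqref{eq:KatoB} can swallow the mixed first-order term $-2\mnN\,\fff(\nor\otimes\cd(\sffN/\mnN),\cd\Ahat)$ appearing in \eqref{eq:evolve tor}.

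The second step is a Huisken--Stampacchia iteration on super-level sets of $\phi_\sigma$. Multiplying by $u\doteqdot(\phi_\sigma-LK^\sigma)_+^{p/2}$ and integrating, partition the support of $u$ into the acylindrical region $U_{\alpha,\eta}$, on which the Poincar\'e-type inequality Proposition \ref{prop:Poincare} absorbs the remaining positive integral reaction, and its complement $\{\vert\sff\vert^2<(\tfrac{1}{n-1}+\eta)\vert\mn\vert^2\}$, on which $\vert\Ahat\vert^2\le c(\eta)(\vert\mn\vert^2+K)$ follows from Proposition \ref{prop:cylindrical decay} combined with the pinching hypothesis. The hypothesis \eqref{eq:codimension hypothesis} needed for Proposition \ref{prop:Poincare} is arranged by a bootstrap: begin with the crude bound $\vert\Ahat\vert^2\le\Lambda_0 W$ furnished by \eqref{eq:uniform quadratic pinching general}, apply Proposition \ref{prop:Poincare} with $\delta=0$, run one round of iteration to extract an improved exponent $\delta_1>0$, and feed the result back until the target $\delta$ is reached.

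The main obstacle is the interplay between the vanishing set of $\mn$ and the ambient curvature. In \cite{Naff}, Euclidean pinching forces $\sff$ to vanish wherever $\mn$ does, so the problematic set can simply be excised, whereas here the $K$-terms permit $\sff$ to remain nonzero while $\mn=0$. The remedy is the systematic substitution of $W=\vert\mn\vert^2+K$ for $\vert\mn\vert^2$ in every scale-invariant quantity, which keeps all denominators uniformly positive at the cost of additional $K$-terms; the latter are controlled by the ambient good term $-nK\vert\Ahat\vert^2$ in \eqref{eq:evolve tor}, closing the argument.
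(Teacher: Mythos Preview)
Your proposal has a genuine structural gap: it is circular. You invoke Proposition~\ref{prop:Poincare} (the Poincar\'e-type inequality) in the Stampacchia iteration, but Proposition~\ref{prop:Poincare} has the codimension estimate \eqref{eq:codimension hypothesis} as a \emph{hypothesis}. Your bootstrap fix---start with the crude bound $\vert\Ahat\vert^2\le\Lambda_0 W$ (i.e.\ $\delta=0$) and iterate---does not get off the ground: inspecting the proof of Proposition~\ref{prop:Poincare}, the hypothesis is used to absorb the term $CW^{5/2}\vert\Ahat\vert$ into $\gamma W^3$ via Young's inequality, which requires $\delta>0$ strictly. With $\delta=0$ you only get $W^{5/2}\vert\Ahat\vert\le\sqrt{\Lambda_0}\,W^3$, and $\Lambda_0$ is not small. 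In the paper's architecture, the codimension estimate comes \emph{before} the Poincar\'e inequality and the cylindrical estimate (\S\ref{sec:codimension} precedes \S\ref{ssec:cylindrical estimate}); it is the input, not the output, of the integral machinery.

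The paper's proof is a pure pointwise maximum-principle argument, with no integral estimates at all. The key structural choice is to take $W$ not as $\vert\mn\vert^2+K$ but as the \emph{pinching gap}
\[
W=\tfrac{1}{2}\big(bK+a\vert\mn\vert^2-\vert\sff\vert^2\big),
\]
with $a,b$ chosen slightly inside the preserved pinching cone so that $W\ge\varepsilon(\vert\mn\vert^2+K)$. This $W$ has an evolution inequality (from the proof of Proposition~\ref{prop:preserving pinching}) whose reaction and gradient terms match those of $\vert\Ahat\vert^2$ term-by-term, which is what makes the comparison $f_\sigma=\tfrac{1}{2}\vert\Ahat\vert^2 W^{\sigma-1}$ work. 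The reaction-term comparison (Claim~\ref{claim:Naff reaction}) is relatively soft, but the gradient-term comparison (Claim~\ref{claim:codimension estimate gradient terms}) is where the real work is: it is \emph{not} a direct application of the Kato inequality \eqref{eq:KatoB}, but a careful optimization over two Young-inequality parameters that only closes when $m-\alpha<\tfrac{n(n-1)}{3(n+1)}$ \emph{and} $H^2\ge\Lambda K$. The possible vanishing of $\mn$ is then handled by the observation that $f_\sigma$ large forces $H^2$ large compared to $K$, so the differential inequality is only needed where it holds.
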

\begin{proof}
Let $\tau = \tau(n,m-\alpha)$ be defined by
\[2 \tau\doteqdot \min\left\{\frac{n}{4},\frac{n(n-1)}{3(n+1)}\right\} - m + \alpha.\]
Given $\sigma\in(0,1)$, set
\[
f_\sigma\doteqdot \begin{cases} \displaystyle \frac{\frac{1}{2}\vert\Ahat\vert^2}{W}W^{\sigma} &\text{if} \;\; H >0\\ 0 & \text{if}\;\; H=0\vts,\end{cases}
\]
where 
\[
W\doteqdot \frac{1}{2}\big(bK+a\mnN^2-\vert\sff\vert^2\big)
\]
with
\[
a=\frac{1}{n-m+\alpha -\tau }\;\;\text{and}\;\; b=2(m - \alpha + \tau)\vts.
\]
Observe that
\[
2W\ge \varepsilon b(\mnN^2+K)\vts,
\]
where $\varepsilon=\varepsilon(n,m-\alpha)>0$. Indeed, since $(m-\alpha)$-pinching is preserved when $m-\alpha<\frac{n}{4}$,
\bann
-2W={}&-\left(\frac{1}{n-m+\alpha -\tau } - \frac{1}{n-m+\alpha} \right)\mnN^2-2\tau K+\vert\sff\vert^2-\tfrac{1}{n-m+\alpha}\mnN^2-2(m-\alpha)K\\
\le{}&-\left(\frac{1}{n-m+\alpha-\tau} - \frac{1}{n-m+\alpha} \right)\mnN^2-2\tau K.
\eann
Moreover, with this choice of $a$ we have 
\begin{equation}
\label{eq:codim_a}
a < \min\left\{\frac{4}{3n}, \frac{3(n+1)}{2n(n+2)}\right\}.
\end{equation}

Recall that
\begin{align*}
(\cd_t-\Delta)\frac{1}{2}\vert\Ahat\vert^2={}&\vert{\big\langle\Ahat,\Ahat\big\rangle^{\!\top}}\vert^2+\vert \sffN\wedge \Ahat\vert^2+\vert\Ahat\wedge\Ahat\vert^2-nK\vert\Ahat\vert^2\\
{}&-\vert\cd\Ahat\vert^2-2\mnN \fff\Big(\nor\otimes\cd\frac{\sffN}{\mnN},\cd\Ahat\Big)
\end{align*}
and

\begin{align*}
-(\pd_t-\Delta)W={}&\vert{\inner{\sff}{\sff}^\top}\vert^2+\vert{\sff\wedge\sff}\vert^2-a\vert\Wein(\,\cdot\,,\mn)\vert^2+nK\big(\vert \sff\vert^2-a\vert \mn\vert^2\big)-2nK\vert\mathring\sff\vert^2\\
{}&-\big(\vert\cd\sff\vert^2-a\vert \cd^\perp \mn\vert^2\big)
\end{align*}
wherever $\mn\neq 0$. We first compare the reaction terms in these two evolution equations. 
\begin{claim}\label{claim:Naff reaction}
There exists $\theta=\theta(n,m-\alpha)<1$ such that
\ba\label{eq:Naff reaction}
\frac{1}{\frac{1}{2}\vert\Ahat\vert^2}\Big({}&\vert{\big\langle\Ahat,\Ahat\big\rangle^{\!\top}}\vert^2+\vert \sffN\wedge \Ahat\vert^2+\vert\Ahat\wedge\Ahat\vert^2-nK\vert\Ahat\vert^2\Big)\nonumber\\
{}&\le-\frac{\theta}{W}\Big(\vert{\inner{\sff}{\sff}^\top}\vert^2+\vert{\sff\wedge\sff}\vert^2-a\vert\Wein(\,\cdot\,,\mn)\vert^2+nK\big(\vert \sff\vert^2-a\vert \mn\vert^2\big)-2nK\vert\mathring\sff\vert^2\!\Big)\vts.
\ea
\end{claim}
\begin{proof}[Proof of Claim \ref{claim:Naff reaction}]
On the one hand, we have seen (recall \eqref{eq:Q reaction term estimate}) that
\bann
-R_{W}\doteqdot{}&\vert{\inner{\sff}{\sff}^\top}\vert^2+\vert{\sff\wedge\sff}\vert^2-a\vert\Wein(\,\cdot\,,\mn)\vert^2+nK\big(\vert \sff\vert^2-a\vert \mn\vert^2\big)-2nK\vert\mathring\sff\vert^2\\
\le{}&-2W\big(\vert\mathring{\sffN}\vert^2+\tfrac{1}{an-1}\vert\Ahat\vert^2+\tfrac{1}{n}\mnN^2-(n-b)K\big)\vts.
\eann
%
Replacing $bK$ using
\[
\big(1+\tfrac{\varepsilon}{2}\big)bK=\big(1+\tfrac{\varepsilon}{2}\big)\big(\vert\mathring\sffN\vert^2+\vert\Ahat\vert^2+2W-\tfrac{an-1}{n}\mnN^2\big),
\]
we obtain
\bann
R_W\ge{}&2W\Big(\big(2+\tfrac{\varepsilon}{2}\big)\vert\mathring{\sffN}\vert^2+\big(1+\tfrac{\varepsilon}{2}+\tfrac{1}{an-1}\big)\vert\Ahat\vert^2+2\big(1+\tfrac{\varepsilon}{2}\big)W\Big)\\
&+2W\Big(\left(\tfrac{2-an}{n}-\tfrac{\varepsilon}{2}\tfrac{an-1}{n} \right) \mnN^2-\big(n+\tfrac{\varepsilon b}{2}\big)K\!\Big).
\eann
The term $\frac{2-an}{n}\mnN^2$ can be discarded since $2-an>0$.

Since $2W\ge \varepsilon b(H^2+K)$, we may estimate
\[
-4W+\varepsilon bK\le -\varepsilon b\big(2\mnN^2+K\big),
\]
and hence
\bann
R_W\ge{}&W\Big(\big(4+\varepsilon\big)\vert\mathring{\sffN}\vert^2+\big(2+\varepsilon+\tfrac{2}{an-1}\big)\vert\Ahat\vert^2+(\varepsilon b-2n)K+2\varepsilon W+\varepsilon(2b-\tfrac{an-1}{n})H^2\Big)\\
\ge{}&W\Big(\big(4+\varepsilon\big)\vert\mathring{\sffN}\vert^2+\big(3+\varepsilon\big)\vert\Ahat\vert^2+(\varepsilon-2n)K\Big)\vts.
\eann

On the other hand, by \eqref{eq:AnBa reaction 1} and \eqref{eq:AnBa reaction 2},
\bann
R_{\frac{1}{2}\vert\Ahat\vert^2}\doteqdot{}&\vert{\big\langle\Ahat,\Ahat\big\rangle^{\!\top}}\vert^2+\vert \sffN\wedge \Ahat\vert^2+\vert\Ahat\wedge\Ahat\vert^2-nK\vert\Ahat\vert^2\\
\le{}& \big(4\vert\mathring\sffN\vert^2+3\vert\Ahat\vert^2-2nK\big)\tfrac{1}{2}\vert\Ahat\vert^2.
\eann
The claim follows.
\end{proof}

We estimate the first order terms as follows.

\begin{claim}\label{claim:codimension estimate gradient terms}
There exist $\theta=\theta(n,m-\alpha)<1$ and $\Lambda=\Lambda(n,m-\alpha)<\infty$ such that
\begin{equation}\label{eq:Naff gradient}
-\vert\cd\Ahat\vert^2-2\mnN \fff\Big(\nor\otimes\cd\frac{\sffN}{\mnN},\cd\Ahat\Big)\le \theta\frac{\frac{1}{2}\vert\Ahat\vert^2}{W}\big(\vert\cd\sff\vert^2-a\vert\cd^\perp\mn\vert^2\big)
\end{equation}
so long as $H^2\ge \Lambda K$.
\end{claim}
\begin{proof}[Proof of Claim \ref{claim:codimension estimate gradient terms}]
Decomposing $\cd\sff$ and $\cd\mn$ into their irreducible components and applying the Kato inequalities \eqref{eq:KatoA} and \eqref{eq:KatoB} yields
\bann
\vert\cd\sff\vert^2-a\vert\cd\mn\vert^2={}&-\tfrac{an-1}{n}\big(\mnN^2\vert\torsionhat\vert^2+\vert\cd\mnN\vert^2\big)+\vert \torsionhat\otimes\mathring\sffN+\cdhat\Ahat\vert^2+\vert\torsionN(\vts\cdot\vts,\Ahat)+\cd^\top\mathring\sffN\vert^2\\
\ge{}&\big(\tfrac{2(n-1)}{n(n+2)}(1-s_2)-\tfrac{an-1}{n}\big)\mnN^2\vert\torsionhat\vert^2+\big(\tfrac{2(n-1)}{n(n+2)}(1-s_1)-\tfrac{an-1}{n}\big)\vert\cd\mnN\vert^2\\
{}&+s_1\vert\torsionN(\vts\cdot\vts,\Ahat)+\cd^\top\mathring\sffN\vert^2+s_2\vert\torsionhat\otimes\mathring\sffN+\cdhat\Ahat\vert^2
\eann
for any $s_1,s_2\in[0,1]$. Choosing $s_2=0$ and $s_1= 1-\frac{(n+2)(an-1)}{2(n-1)}$, so that
\[
\tfrac{2(n-1)}{n(n+2)}(1-s_1)-\tfrac{an-1}{n}=0\vts,
\]
yields
\ba\label{eq:good grad terms}
\vert\cd\sff\vert^2-a\vert\cd\mn\vert^2\ge{}&\alpha_1\vert\torsionN(\vts\cdot\vts,\Ahat)+\cd^\top\mathring\sffN\vert^2+\frac{\alpha_2}{n}\mnN^2\vert\torsionhat\vert^2,
\ea
where
\[
\alpha_1\doteqdot 1-\tfrac{(n+2)(an-1)}{2(n-1)}\;\;\text{and}\;\; \alpha_2\doteqdot \tfrac{2(n-1)}{n+2}-(an-1).
\]

The two components of $\cd\sff$ on the right of \eqref{eq:good grad terms} will be sufficient to control the gradient terms in the evolution equation for $\Ahat$ (which is why we kept as much of them as possible, at the expense of the others).

So consider
\bann
-2\mnN \fff\Big(\nor\otimes\cd\frac{\sffN}{\mnN},\cd\Ahat\Big)={}&2\mnN\tr_{\fff^\top}\fff\Big(\cd_{\cdot}^\perp\nor\otimes\cd_\cdot\frac{\mathring\sffN}{\mnN},\Ahat\Big)\\
\leq{}&2\vert\torsionhat\vert\vert\Ahat\vert\left\vert \cd\mathring\sffN-\frac{\cd H}{H}\otimes\mathring\sffN\right\vert\\
={}&2\vert\torsionhat\vert\vert\Ahat\vert\left\vert\cd\mathring\sffN+\torsionN(\vts\cdot\vts,\Ahat)-\torsionN(\vts\cdot\vts,\Ahat)-\frac{\cd H}{H}\otimes\mathring\sffN\right\vert\\
\le{}&2\vert\torsionhat\vert\vert\Ahat\vert\left(\vert\cd\mathring\sffN+\torsionN(\vts\cdot\vts,\Ahat)\vert+\vert\torsionN(\vts\cdot\vts,\Ahat)\vert+\vert\cd H\vert\frac{\vert\mathring\sffN\vert}{H}\right).
\eann
Let $\mathring\chi $ be the indicator function on the support of $\vert\mathring\sffN\vert^2$. Then, applying Young's inequality three times yields, for any $\gamma_1,\gamma_2>0$,
\bann
-2\mnN \fff\Big(\nor\otimes\cd\frac{\sffN}{\mnN},\cd\Ahat\Big)\le{}&(1+\gamma_1^{-1}+\gamma_2^{-1}\mathring\chi)\vert\torsionhat\vert^2\vert\Ahat\vert^2\\
{}&+\gamma_1\vert\cd\mathring\sffN+\torsionN(\vts\cdot\vts,\Ahat)\vert^2+\vert\torsionN(\vts\cdot\vts,\Ahat)\vert^2+\gamma_2 \mathring\chi\frac{\vert\mathring\sffN\vert^2}{H^2}\vert\cd H\vert^2.
\eann
On the other hand, by Young's inequality and the Kato inequality \eqref{eq:KatoB},
\bann
\vert\cdhat\Ahat\vert^2+\vert\mathring\sffN\vert^2\vert\torsionhat\vert^2\ge{}&\tfrac{1}{2}\vert\cdhat\Ahat+\torsionhat\otimes\mathring\sffN\vert^2\\
\ge
{}&\frac{n-1}{n(n+2)}\mnN^2\vert\torsionhat\vert^2.
\eann 
Combining these yields
\bann
-\vert\cd\Ahat\vert^2-2\mnN \fff\Big(\nor\otimes\cd\frac{\sffN}{\mnN},\cd\Ahat\Big)\le{}&\left(\!(1+\gamma_1^{-1}+\gamma_2^{-1}\mathring\chi)\frac{\vert\Ahat\vert^2}{H^2}+\frac{\vert\mathring\sffN\vert^2}{H^2}-\frac{n-1}{n(n+2)}\right)\!H^2\vert\torsionhat\vert^2\\
{}&+\gamma_1\vert\cd\mathring\sffN+\torsionN(\vts\cdot\vts,\Ahat)\vert^2+\gamma_2\mathring\chi\frac{\vert\mathring\sffN\vert^2}{H^2}\vert\cd H\vert^2.
\eann
Appealing to \eqref{eq:KatoA}, we get 
\bann
-\vert\cd\Ahat\vert^2-2\mnN \fff\Big(\nor\otimes\cd\frac{\sffN}{\mnN},\cd\Ahat\Big)\le{}&\left((1+\gamma_1^{-1}+\gamma_2^{-1}\mathring\chi)\frac{\vert\Ahat\vert^2}{H^2}+\frac{\vert\mathring\sffN\vert^2}{H^2}-\frac{n-1}{n(n+2)}\right)H^2\vert\torsionhat\vert^2\\
{}&+\bigg(\gamma_1+\gamma_2\mathring\chi\frac{n(n+2)}{2(n-1)}\frac{\vert\mathring\sffN\vert^2}{H^2}\bigg)\vert\cd\mathring\sffN+\torsionN(\vts\cdot\vts,\Ahat)\vert^2.
\eann
Now set
\[
\gamma_1=\beta_1\frac{\frac{1}{2}\vert\Ahat\vert^2}{W}\;\;\text{and}\;\;\gamma_2=\beta_2 \frac{2(n-1)}{n(n+2)}\frac{\frac{1}{2}\vert\Ahat\vert^2}{W}\frac{H^2}{\vert\mathring\sffN\vert^2}
\]
at points such that $\vert\mathring\sffN\vert^2 >0$, where $\beta_1$ and $\beta_2$ are to be chosen. If $\vert\mathring\sffN\vert^2=0$ then let $\gamma_1$ and $\beta_1$ be as before, and set $\gamma_2=\beta_2 = 1$. This yields 
\[\bigg(\gamma_1+\gamma_2\mathring\chi\frac{n(n+2)}{2(n-1)}\frac{\vert\mathring\sffN\vert^2}{H^2}\bigg)\vert\cd\mathring\sffN+\torsionN(\vts\cdot\vts,\Ahat)\vert^2 \leq (\beta_1+\beta_2\mathring\chi) \frac{\frac{1}{2}\vert\Ahat\vert^2}{W}\vert\cd\mathring\sffN+\torsionN(\vts\cdot\vts,\Ahat)\vert^2,\]
and consequently
\begin{align}
\label{eq:codim_bad_terms}
-\vert\cd\Ahat\vert^2-2\mnN \fff\Big({}&\nor\otimes\cd\frac{\sffN}{\mnN},\cd\Ahat\Big)\notag\\
\leq{}&\left(\frac{\vert\Ahat\vert^2}{H^2}+\frac{\vert\mathring\sffN\vert^2}{H^2}+\beta_1^{-1}\frac{2W}{H^2}+\frac{n(n+2)}{2(n-1)}\beta_2^{-1}\mathring\chi\frac{2W\vert\mathring\sffN\vert^2}{H^4}-\frac{n-1}{n(n+2)}\right)H^2\vert\torsionhat\vert^2\notag\\
{}&+\frac{\frac{1}{2}\vert\Ahat\vert^2}{W} \Bigg((\beta_1+\beta_2\mathring\chi) \vert\cd\mathring\sffN+\torsionN(\vts\cdot\vts,\Ahat)\vert^2\Bigg).
\end{align}
The terms on the second line can be controlled using the first good term in \eqref{eq:good grad terms} provided $\beta_1 + \beta_2 \mathring\chi < \alpha_1$; so let $\beta_1$ and $\beta_2$ satisfy this inequality. To handle the terms on the first line we use
\[
2W=bK+\tfrac{an-1}{n}H^2-\vert\Ahat\vert^2-\vert\mathring\sffN\vert^2\le bK+\tfrac{an-1}{n}H^2\vts
\]
to estimate
{\small
\bann
\frac{\vert\Ahat\vert^2}{H^2}{}&+\frac{\vert\mathring\sffN\vert^2}{H^2}+\beta_1^{-1}\frac{2W}{H^2}+\frac{n(n+2)}{2(n-1)}\beta_2^{-1}\mathring\chi\frac{2W\vert\mathring\sffN\vert^2}{H^4}\\
={}&(1-\beta_1^{-1})\frac{\vert\Ahat\vert^2}{H^2}+(1-\beta_1^{-1})\frac{\vert\mathring\sffN\vert^2}{H^2}+\beta_1^{-1}\left(\frac{bK}{H^2}+\frac{an-1}{n}\right)+\frac{n(n+2)}{2(n-1)}\beta_2^{-1}\mathring\chi\frac{2W\vert\mathring\sffN\vert^2}{H^4}\\
\le{}&(1-\beta_1^{-1})\frac{\vert\Ahat\vert^2}{H^2}+(1-\beta_1^{-1})\frac{\vert\mathring\sffN\vert^2}{H^2}+\beta_1^{-1}\!\!\left(\frac{bK}{H^2}+\frac{an-1}{n}\right)+\frac{n(n+2)}{2(n-1)}\beta_2^{-1}\mathring\chi\!\left(\frac{bK}{H^2}+\frac{an-1}{n}\right)\!\!\frac{\vert\mathring\sffN\vert^2}{H^2}\\
={}&(1-\beta_1^{-1})\frac{\vert\Ahat\vert^2}{H^2}+\left(\!1-\beta_1^{-1} +\beta_2^{-1}\mathring\chi\frac{(n+2)}{2(n-1)}\!\!\left[\frac{nbK}{H^2}+an-1\right]\!\right)\!\!\frac{\vert\mathring\sffN\vert^2}{H^2}+\beta_1^{-1}\!\!\left(\frac{bK}{H^2}+\frac{an-1}{n}\right)\!.
\eann}
Since $\beta_1<\alpha_1<1$, the first of these terms is non-positive. We will choose $\beta_1$ and $\beta_2$ so that the inequalities 
\begin{equation}
\label{eq:codim_grad_1}
1-\beta_1^{-1} +\beta_2^{-1}\mathring\chi \frac{(n+2)}{2(n-1)}\left[\frac{nbK}{H^2}+an-1\right] < 0
\end{equation}
and
\begin{equation}
\label{eq:codim_grad_2}
\beta_1^{-1}\left(\frac{bK}{H^2}+\frac{an-1}{n}\right)-\frac{n-1}{n(n+2)}<0
\end{equation}
hold whenever $H^2$ is sufficiently large compared to $K$. Together, these inequalities imply the first term on the right-hand side of \eqref{eq:codim_bad_terms} is non-positive.

To simplify notation, define $w\doteqdot\tfrac{(n+2)(an-1)}{2(n-1)}$. Let us set $\beta_1 = 2  w + \delta$, where $\delta >0$ is a small constant depending on $n$ and $m-\alpha$ which we progressively refine. Note that, for all $n \geq 5$, we can indeed choose $\delta$ so that $\beta_1 < \alpha_1$. We compute 
\bann
\beta_1^{-1}\left(\frac{bK}{H^2}+\frac{an-1}{n}\right)-\frac{n-1}{n(n+2)}={}&\frac{\beta_1^{-1}(n-1)}{n(n+2)}\left(\frac{n(n+2)}{n-1} \frac{bK}{H^2} -\delta \right),
\eann
hence there is a constant $C_0 = C_0(n)$ such that the right-hand side is negative (meaning \eqref{eq:codim_grad_2} holds) whenever 
\[\frac{H^2}{bK} \geq C_0\delta^{-1}.\]

Now we turn to \eqref{eq:codim_grad_1}. If $\mathring\chi=0$ we are done, so assume we are at a point where $\mathring\chi=1$. Observe that $w = 1-\alpha_1$. Hence the inequality $\beta_1 + \beta_2 < \alpha_1$ holds if 
\[\beta_2 < \alpha_1-\beta_1 = 1-3w -\delta.\]
For this reason we set $\beta_2 = 1-3w-2\delta$ (note this quantity is strictly positive for all $n \geq 5$ and $\delta$ suffciently small relative to $n$). We compute 
\bann
1-\beta_1^{-1} +\beta_2^{-1}\frac{(n+2)(an-1)}{2(n-1)} ={}&\beta_1^{-1} \beta_2^{-1}(\beta_1 \beta_2 -\beta_2 +\beta_1w)\\
={}& \beta_1^{-1} \beta_2^{-1}((4w-1)(1-w)  + \delta(3-6w-2\delta))\vts.
\eann
Now we make use of the pinching assumption to estimate the right-hand side. In dimensions $n \geq 8$, the inequality $a \leq \tfrac{4}{3n}$ ensures $w < \tfrac{1}{4}$; in dimensions $5 \leq n \leq 7$, we need $a < \tfrac{3(n+1)}{2n(n+2)}$ to ensure $w<\tfrac{1}{4}$. Each of these conditions is ensured by \eqref{eq:codim_a}, so we may indeed write $w = \tfrac{1}{4} - \eta$, where $\eta$ is a small positive number depending on $n$ and $m-\alpha$. Choosing $\delta$ sufficiently small relative to $\eta$ and $n$ then gives 
\bann
1-\beta_1^{-1} +\beta_2^{-1}\frac{(n+2)(an-1)}{2(n-1)} \leq {}& -\beta_1^{-1} \beta_2^{-1}\alpha_1\eta,
\eann
and consequently,
\bann
1-\beta_1^{-1} +\beta_2^{-1}\frac{(n+2)}{2(n-1)}\left[\frac{nbK}{H^2}+an-1\right] \leq \beta_2^{-1} \left[\frac{n(n+2)}{2(n-1)} \frac{bK}{H^2} -\beta_1^{-1}\alpha_1\eta \right]\vts.
\eann
In particular, there is a constant $C_1 = C_1(n)$ such that, as long as $\delta$ is small enough relative to $n$, \eqref{eq:codim_grad_1} holds at every point where 
\[\frac{H^2}{bK} \geq C_1\eta^{-1}.\]

To recap, there are constants $\delta \in (0,1)$ and $\eta \in (0, \tfrac{1}{4})$ depending on $n$ and $m-\alpha$ such that, at points where 
\begin{equation}
\label{eq:codim_H_large}
\frac{H^2}{bK} \geq \Lambda \doteqdot \max \{C_0\delta^{-1} , C_1\eta^{-1}\}\vts,
\end{equation}
the inequalities \eqref{eq:codim_grad_1} and \eqref{eq:codim_grad_2} hold. Combining these inequalities with \eqref{eq:codim_bad_terms}, we see that 
\bann
-\vert\cd\Ahat\vert^2-2\mnN \fff\Big(\nor\otimes{}\cd\frac{\sffN}{\mnN},\cd\Ahat\Big)\leq{}&\frac{\frac{1}{2}\vert\Ahat\vert^2}{W} \Bigg((\beta_1+\beta_2) \vert\cd\mathring\sffN+\torsionN(\vts\cdot\vts,\Ahat)\vert^2\Bigg)
\eann 
whenever \eqref{eq:codim_H_large} holds. Inserting $\beta_1 = 2w + \delta $ and $\beta_2 = 1-3w -2\delta$, and combining this estimate with \eqref{eq:good grad terms}, we arrive at 
\begin{equation*}
-\vert\cd\Ahat\vert^2-2\mnN \fff\Big(\nor\otimes\cd\frac{\sffN}{\mnN},\cd\Ahat\Big)\le \frac{\alpha_1 - \delta}{\alpha_1} \frac{\frac{1}{2}\vert\Ahat\vert^2}{W}\big(\vert\cd\sff\vert^2-a\vert\cd^\perp\mn\vert^2\big).
\end{equation*}
Hence the claim holds with $\theta \doteqdot 1-\alpha_1^{-1}\delta$.
\end{proof}

Since, at points where $\mnN>0$,
\bann
\frac{(\pd_t-\Delta)f_\sigma}{f_\sigma}={}&\frac{(\pd_t-\Delta)\vert\Ahat\vert^2}{\vert\Ahat\vert^2}-(1-\sigma)\frac{(\pd_t-\Delta)W}{W}+2(1-\sigma)\inner{\frac{\cd f_\sigma}{f_\sigma}}{\frac{\cd W}{W}}\\
&-\sigma(1-\sigma)\frac{\vert\cd W\vert^2}{W^2},
\eann
Claims \ref{claim:Naff reaction} and \ref{claim:codimension estimate gradient terms} imply that, for $\sigma=\sigma(n,m-\alpha)$ sufficiently small,
\ba
\frac{(\pd_t-\Delta)f_\sigma}{f_\sigma}\leq{}&2(1-\sigma)\inner{\frac{\cd f_\sigma}{f_\sigma}}{\frac{\cd W}{W}}\label{eq:Naff inequality}
\ea
wherever $\mnN^2\geq\Lambda K$. Now, since $2W\ge \varepsilon b(H^2+K)$, where $\varepsilon=\varepsilon(n,m-\alpha)$, we can estimate
\[
\frac{\frac{1}{2}\vert\Ahat\vert^2}{W}\le \frac{\frac{1}{2}(\vert\Ahat\vert^2+\vert\mathring\sffN\vert^2+\frac{1}{n}H^2)}{W}=\frac{a\mnN^2+bK-2W}{2W}\le C\vts,
\]
where $C=C(n,m-\alpha)$. This implies that $H^2$ is large compared to $K$ at any point where $f_\sigma$ is large compared to $K^\sigma$. Indeed, if
\[
f_\sigma\ge C\left(a\Lambda+b\right)K^\sigma\vts,
\]
then, since $\sigma<1$,
\[
H^2\ge \Lambda K\vts.
\]
We may now conclude from \eqref{eq:Naff inequality} that
\[
f_\sigma\le \max\left\{\max_{M\times\{0\}}f_\sigma,C\left(a\Lambda+b\right)K^\sigma\right\}\vts.
\]
The proposition follows.
\end{proof}

\subsection{The cylindrical estimate}\label{ssec:cylindrical estimate}

Next, we use the codimension estimate (Proposition \ref{prop:codimension estimate}) and the Poincar\'e-type inequality (Proposition \ref{prop:Poincare}) to obtain a cylindrical estimate. The argument follows the Stampacchia iteration procedure developed by Huisken \cite{Hu84}.

Given $n\ge 5$ and $\ell\ge 2$, it will be convenient to  state the estimate in terms of the class $\mathcal{C}_K^{n,\ell}(\alpha,V,\Theta)$ of $n$-dimensional submanifolds of $S_K^{n+\ell}$ satisfying
\begin{itemize}
\item $\vert\sff\vert^2-\tfrac{1}{n-2+\alpha}\vert\mn\vert^2-2(2-\alpha)K$, (with $\alpha\ge \alpha_n$, where $\alpha_n$ is defined by \eqref{eq:alpha_n}),
\item $\mu(M)\le VK^{-\frac{n}{2}}$, and
\item $\max_{M}\vert\sff\vert^2\le \Theta K$.
\end{itemize} 
Of course, every $n$-dimensional submanifold of $S_K^{n+\ell}$ satisfying \eqref{eq:strict quadratic pinching} lies in the class $\mathcal{C}_K^{n,\ell}(\alpha,V,\Theta)$ for some $\alpha\ge \alpha_n$, $V<\infty$, and $\Theta<\infty$.

\begin{proposition}[Cylindrical estimate]\label{prop:cylindrical estimate}
Let $X:M\times[0,T)\to S_K^{n+\ell}$ be a solution to mean curvature flow with initial condition in the class $\mathcal{C}_K^{n,\ell}(\alpha,V,\Theta)$ and set $\eta_0\doteqdot \frac{1}{n-2+\alpha_n}-\frac{1}{n-1}$. For every $\eta\in(0,\eta_0)$ there exists $C_\eta=C_\eta(n,\alpha,V,\Theta,\eta)<\infty$ so that
\begin{align}\label{eq:cylindrical estimate}
|\sff|^2-\tfrac{1}{n-1}|\mn|^2 \leq \eta \vert\mn\vert^2+C_\eta K\vts\mathrm{e}^{-2Kt} \quad\text{in}\quad M^n\times[0,T)\,.
\end{align}
\end{proposition}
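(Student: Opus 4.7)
My plan is to follow Huisken's Stampacchia iteration procedure \cite{Hu84}, adapted to higher codimension by using the Poincar\'e-type inequality of Proposition \ref{prop:Poincare} to absorb the zero-order reaction terms, and combined with the exponential decay estimate of Proposition \ref{prop:cylindrical decay} to extract the $\mathrm{e}^{-2Kt}$ factor.

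First I reduce to a time-independent cylindrical estimate. Setting $f_\eta \doteqdot |\sff|^2 - (\tfrac{1}{n-1}+\eta)|\mn|^2$, the claim is equivalent to $(f_\eta)_+ \leq C_\eta K\mathrm{e}^{-2Kt}$. Proposition \ref{prop:cylindrical decay} yields $|\sff|^2 - \tfrac{1}{n}|\mn|^2 \leq C_0(|\mn|^2+K)\mathrm{e}^{-2Kt}$, so a direct algebraic manipulation shows that, for $t \geq t_\eta \doteqdot \tfrac{1}{2K}\log(n(n-1)C_0)$, one already has $(f_\eta)_+ \leq C_0 K\mathrm{e}^{-2Kt}$. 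On the complementary interval $[0,t_\eta)$, the bound $\mathrm{e}^{-2Kt} \geq \mathrm{e}^{-2Kt_\eta}$ depends only on $\eta$, so it suffices to establish a time-independent bound $(f_\eta)_+ \leq CK$ valid for all $t\in[0,T)$.

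To establish this time-independent bound, I derive a reaction--diffusion inequality for $f_\eta$ from \eqref{eq:evolve A squared} and \eqref{eq:evolve H squared}, arguing as in the proof of Proposition \ref{prop:preserving pinching}. On $\{f_\eta>0\}\cap\{\mn\neq 0\}$ this takes the schematic form
\[
(\partial_t - \Delta)f_\eta \leq 2f_\eta\big(|\sff|^2 + CK\big) - \big(|\nabla\sff|^2 - (\tfrac{1}{n-1}+\eta)|\nabla^\perp\mn|^2\big),
\]
where the gradient term on the right is negative modulo a constant factor by the Kato inequality \eqref{eq:Kato inequality}. On $\{f_\eta>0\}$, preservation of pinching (Proposition \ref{prop:preserving pinching}) places us in the acylindrical set $U_{\alpha,\eta}$ of Proposition \ref{prop:Poincare}, whose codimension hypothesis \eqref{eq:codimension hypothesis} is supplied by Proposition \ref{prop:codimension estimate}. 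Testing the above inequality against $p(f_\eta)_+^{p-1}W^{-q}$, with $W\doteqdot|\mn|^2+K$, small $q=q(\eta)>0$, and large $p$, and then integrating by parts and applying Young's inequality, the leading reaction term in the resulting integral inequality is of order $\int(f_\eta)_+^p W^{-q+1}\,d\mu$. Applying Proposition \ref{prop:Poincare} with $u=(f_\eta)_+^{p/2}W^{-q/2}$ absorbs this into the gradient terms, leaving only a term of order $\int K(f_\eta)_+^p W^{-q}\,d\mu$. Combined with the Michael--Simon Sobolev inequality (whose constant is controlled by the volume bound in $\mathcal{C}_K^{n,\ell}(\alpha,V,\Theta)$), Stampacchia iteration on $(f_\eta-kK)_+$, letting $k\to\infty$, then yields the desired $L^\infty$ bound.

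The principal difficulty lies in closing the Stampacchia iteration: one must track the $p$-dependence of the constants carefully and absorb, via Proposition \ref{prop:Poincare}, both the ambient-curvature contributions (which are absent in the Euclidean setting of \cite{Hu84}) and the cross terms involving the conormal component $\Ahat$, where the codimension estimate (Proposition \ref{prop:codimension estimate}) enters in an essential way through the hypothesis \eqref{eq:codimension hypothesis}.
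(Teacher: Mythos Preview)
Your overall strategy—Huisken--Stampacchia iteration with Proposition~\ref{prop:Poincare} supplying the absorption and Proposition~\ref{prop:codimension estimate} verifying its hypothesis—is exactly the paper's. Your reduction to a time-independent bound via Proposition~\ref{prop:cylindrical decay} is a perfectly valid alternative to the paper's choice of building $\mathrm{e}^{2Kt}$ directly into the test function $v_k=(\mathrm{e}^{2Kt}g_{\eta,\sigma}-k)_+^{p/2}$.

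There is, however, a real gap in the absorption step, and it lies in your choice of auxiliary function. Your schematic inequality $(\partial_t-\Delta)f_\eta\le 2f_\eta(|\sff|^2+CK)-\beta|\nabla\sff|^2$ has a reaction term with coefficient of order one. After testing against $p(f_\eta)_+^{p-1}W^{-q}$, the bad integral is $\sim p\int(f_\eta)_+^pW^{1-q}\,d\mu$, and applying Proposition~\ref{prop:Poincare} with coefficient $\sim p/\gamma$ produces gradient terms of size $\sim(p/\gamma)\int(f_\eta)_+^pW^{-q-1}|\nabla\sff|^2$. Using $(f_\eta)_+\le CW$ to compare this with your good term $\beta p\int(f_\eta)_+^{p-1}W^{-q}|\nabla\sff|^2$ leaves a fixed inequality between $C/\gamma$ and $\beta$ that you have no mechanism to enforce; worse, the $|\nabla u|^2$ contribution from Poincar\'e scales like $p^2$ against only $p(p-1)$ on the good side once you choose the Young parameter $r\sim 1/p$, so the loop does not close for large $p$.

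The paper evades this by working not with $f_\eta$ weighted by powers of $|\mn|^2+K$, but with $g_{\eta,\sigma}\doteqdot g_\eta W^{\sigma-1}$ where $W=\tfrac12(bK+a|\mn|^2-|\sff|^2)$ is the \emph{pinching quantity}. The point is that the zero-order reaction terms in the evolutions of $g_\eta$ and of $W$ are \emph{identical} (both equal $2(|\mathring\sffN|^2+\tfrac{1}{an-1}|\Ahat|^2+\tfrac{1}{n}\mnN^2+\text{const}\cdot K)$ times the function itself), so forming the ratio $g_\eta/W$ kills them exactly, and the exponent $\sigma-1$ leaves only the residual $\sigma n|\sff|^2$. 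This free smallness parameter $\sigma$ is what allows the Poincar\'e inequality to be applied with an arbitrarily small coefficient and hence absorbed, subject to the coupling $\sigma\le \ell p^{-1/2}$. Your $W=|\mn|^2+K$ does not have this structure: its reaction term $2|\Wein(\cdot,\mn)|^2/W$ matches the $|\mathring\sffN|^2+\tfrac{1}{n}\mnN^2$ part but misses the $|\Ahat|^2$ contribution entirely, so no cancellation occurs and no small parameter emerges. Replacing your $W$ by the pinching quantity (and taking the exponent close to $-1$ rather than ``small $q$'') fixes the argument.
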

\begin{proof}
Given $\eta>0$ and $\sigma\in(0,1)$, consider the function
\[
g_{\eta,\sigma}\doteqdot \frac{1}{2}\big[\vert \sff\vert^2-\big(\tfrac{1}{n-1}+\eta\big)\vert\mn\vert^2\big]W^{\sigma-1},
\]
where
\[
W\doteqdot \frac{1}{2}\big(bK+a\vert\mn\vert^2-\vert\sff\vert^2\big)
\]
for
\[
a=\tfrac{1}{n-2+\alpha_n}\;\;\text{and}\;\; b=2(2-\alpha_n).
\]
Computing as in the proof of Proposition \ref{prop:cylindrical decay}, we obtain, for $\eta\leq \eta_0\doteqdot \frac{1}{n-2+\alpha_n}-\frac{1}{n-1}$,
\ba\label{eq:evolve ges}
\frac{(\pd_t-\Delta)g_{\eta,\sigma}}{g_{\eta,\sigma}}\le{}&-2K+\sigma n\vert\sff\vert^2+2(1-\sigma)\inner{\frac{\cd g_{\eta,\sigma}}{g_{\eta,\sigma}}}{\frac{\cd W}{W}}\vts.
\ea

Now consider, for $k\ge 0$ and $p\ge 3$,
\[
v_k\doteqdot (\mathrm{e}^{2Kt}g_{\eta,\sigma}-k)_+^{\frac{p}{2}}\;\;\text{and}\;\; V_k(t)\doteqdot \spt v_k(\cdot,t)\vts.
\]
%
%
By \eqref{eq:evolve ges},
\bann
\frac{(\pd_t-\Delta)v_k^2}{v_k^2}={}&p\mathrm{e}^{2Kt}\frac{(\pd_t-\Delta)g_{\eta,\sigma}+2Kg_{\eta,\sigma}}{(\mathrm{e}^{2Kt}g_{\eta,\sigma}-k)_+}-4\big(1-\tfrac{1}{p}\big)\frac{\vert\cd v_k\vert^2}{v_k^2}\\
\le{}&\left(\sigma np\vert\sff\vert^2-2pK-2\gamma p\frac{\vert\cd\sff\vert^2}{W}\right)\frac{\mathrm{e}^{2Kt}g_{\eta,\sigma}}{(\mathrm{e}^{2Kt}g_{\eta,\sigma}-kK^\sigma)_+}+4\frac{\vert\cd v_k\vert}{v_k}\frac{\vert\cd W\vert}{W}\\
{}&-4\big(1-\tfrac{1}{p}\big)\frac{\vert\cd v_k\vert^2}{v_k^2}
\eann
and hence, for $p$ sufficiently large,
\bann
(\pd_t-\Delta)v_k^2\le{}&-2pKv_k^2+\sigma np\vts \mathrm{e}^{2Kt}g_{\eta,\sigma,+}(\mathrm{e}^{2Kt}g_{\eta,\sigma,+}-k)_+^{p-1}\vert\sff\vert^2-\gamma pv_k^2\frac{\vert\cd\sff\vert^2}{W}-2\vert\cd v_k\vert^2\vts.
\eann
It follows that
\begin{align}
\frac{d}{dt}\int v_k^2\,d\mu+2\int|\nabla v_k|^2\,d\mu+\int\vert\mn\vert^2v_k^2\,d\mu\leq{}& -2pK\int v_k^2\vts d\mu-\gamma p\int v_k^2\frac{\vert\cd\sff\vert^2}{W}\vts d\mu\nonumber\\
{}&+c\sigma p\int_{V_k}(\mathrm{e}^{2Kt}g_{\eta,\sigma})^pW\vts d\mu\vts,\label{eq:Stampacchia}
\end{align}
where $c=c(n)$. In particular, when $k=0$, the Poincar\'e inequality (Proposition \ref{prop:Poincare}) yields
\begin{align}\label{eq:v_k monotonicity}
\frac{d}{dt}\int v_0^2\,d\mu\leq{}& -2pK\int v_0^2\vts d\mu
\end{align}
and hence
\begin{align}\label{eq:L2est}
\int v_{0}^2\vts d\mu\leq C\mathrm{e}^{-2pKt}\vts,
\end{align}
so long as $p\ge\ell^{-1}$ and $\sigma\le \ell p^{-\frac{1}{2}}$, where $\ell=\ell(n,\alpha,\eta)$ and $C=C(n,K,\alpha,V,\Theta,\sigma,p)$.

The $L^2$-estimate \eqref{eq:L2est} can be bootstrapped to an $L^\infty$-estimate using \eqref{eq:Stampacchia} by applying Huisken--Stampacchia iteration. 
Indeed, using \eqref{eq:L2est} we may estimate, for $p\ge \ell$, $\sigma\le \ell^{-1}p^{-\frac{1}{2}}$,
\bann
V_k\leq{}&k^{-p}\int_{V_k}(\mathrm{e}^{2Kt}g_{\eta,\sigma})^{p}d\mu\le Ck^{-p}\vts,
\eann
where $C=C(n,K,\alpha,V,\Theta,\sigma,p)$, so that 
\bann
V_k\leq{}& \frac{\omega_n}{n+1}K^{-\frac{n}{2}}
\eann
so long as $k\ge k_0=k_0(n,K,\alpha,V,\Theta,\sigma,p)$. This enables us to apply the Sobolev inequality \cite{HoSp74} (cf. \cite{MiSi73}), which yields, at each time,
\[\frac{1}{c_{\mathrm{S}}} \Bigg(\int_{V_k} v_k^{2^*}d\mu\Bigg)^\frac{1}{2^*} \leq \int_{V_k}\big(\vert\cd v_k\vert^2 + v_k^2|\mn|^2\big)d\mu\vts,\]
where $2^*\doteqdot\tfrac{2n}{n-2}$ and $c_{\mathrm{S}}$ depends only on $n$, and consequently 
\bann
\frac{d}{dt}\int v_k^2\,d\mu+\frac{1}{c_{\mathrm{S}}} \Bigg(\int_{V_k} v_k^{2^*}d\mu\Bigg)^\frac{1}{2^*}
\le{}&c\vts\sigma p\int_{V_k}(\mathrm{e}^{2Kt}g_{\eta,\sigma})^{p}\big(\vert\mn\vert^2+K\big)\vts d\mu
\eann
so long as  $p\ge \ell$, $\sigma\le \ell^{-1}p^{-\frac{1}{2}}$, and $k\ge k_0$. Assuming further that $k_0\geq \max g_{\eta,\sigma}(\cdot,0)$ (which can be achieved with dependence on $n$, $K$, $\alpha$, $V$, $\Theta$, $\sigma$ and $p$ only), integrating now gives 
\begin{align}\label{eq:still holds with surgeries}
\sup_{t \in [0,T)} \int_M v_k^2(\cdot, t) \, d\mu +\int_0^T\! \bigg( \int_M v_k^{2q} \, d\mu \bigg)^\frac{1}{q} \,dt \leq C\sigma p \int_0^T\!\!\!\int_{V_k} (\mathrm{e}^{2Kt}g_{\eta,\sigma})^{p}\big(\vert\mn\vert^2+K\big)\vts \,d\mu \,dt,
\end{align}
where $C\doteqdot c_{\mathrm{S}}c$ depends only on $n$. Applying the H\"older, interpolation and Young inequalities, exactly as in the proof of \cite[Theorem 5.1]{Hu84}, now yields $C=C(n,\ell,\alpha,V,\Theta,\sigma,p)<\infty$ and $\gamma=\gamma(n)>0$ such that
\begin{align*}
A(h) \leq \frac{C}{(h-k)^p} A(k)^{\gamma}
\end{align*}
for all $h>k>k_0$, where
\[
A(k)\doteqdot \int_0^T\!\!\!\int_{V_k} \,d\mu_t\vts dt\vts,
\]
so long as $p\ge \ell^{-1}$ and $\sigma\le\ell p^{-\frac{1}{2}}$, where $\ell=\ell(n,\alpha,\eta)$. Fixing $p\doteqdot \ell^{-1}$ and $\sigma\doteqdot \ell p^{-\frac{1}{2}}$, Stampacchia's lemma \cite[Lemma 4.1]{St66} now yields
\begin{align*}
A(k_0+d)=0\,,
\end{align*}
where
\begin{align*}
d^p\doteqdot 2^{p\gamma/(\gamma-1)}CA(k_0)^{\gamma-1}\,.
\end{align*}
Estimating via \eqref{eq:L2est}
\begin{align*}
A(k)\leq k_0^{-p}\int_0^T\hspace{-2mm}\int v_{k_0}^2\,d\mu\,dt\leq C(n,K,\alpha,V,\Theta,\eta)\vts,
\end{align*}
we conclude that
\begin{align*}
\mathrm{e}^{2Kt}g_{\eta,\sigma}\leq C(n,K,\alpha,V,\Theta,\eta)\,.
\end{align*}
Young's inequality then yields
\begin{align*}
|\sff|^2-\tfrac{1}{n-1}\vert\mn\vert^2\leq 2\eta \vert\mn\vert^2+C(n,K,\alpha,V,\Theta,\eta)\vts\mathrm{e}^{-2K t}\,.
\end{align*}
The theorem now follows from the scaling covariance of the estimate. 
\end{proof}

\subsection{The gradient estimate} 

Next, we derive a suitable analogue of the ``gradient estimate" \cite[Theorem 6.1]{HuSi09}. We need the following doubling estimates for solutions with initial data in the class $\mathcal{C}^{n,\ell}_K(\alpha,V,\Theta)$.

\begin{proposition}\label{prop:class C universal interior estimates}
Let $X:M\times [0,T)\to S_K^{n+\ell}$ be a maximal solution to mean curvature flow with initial condition in the class $\mathcal{C}_K^{n,\ell}(\alpha,V,\Theta)$. Defining $\Lambda_0$ and $\lambda_0$ by
\begin{equation}\label{eq:lambdas}
\Lambda_0/2\doteqdot \Theta\;\;\text{and}\;\;\mathrm{e}^{2n\lambda_0}\doteqdot 1+\frac{n}{n+3\Lambda_0},
\end{equation}
we have
\begin{equation}\label{eq:class C universal T bound}
\mathrm{e}^{2nKT}\ge 1+\frac{2n}{3\Lambda_0}\,,
\end{equation}
and, for every $k\in\mathbb{N}$,
\begin{equation}\label{eq:class C universal interior estimates}
\max_{M\times\{\lambda_0K^{-1}\}}\vert\cd^k{\sff}\vert^2\leq \Lambda_kK^{k+1}\vts,
\end{equation}
where $\Lambda_k$ depends only on $n$, $k$ and $\Theta$.
\end{proposition}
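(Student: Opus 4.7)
The plan is to first establish an $L^\infty$ bound $|\sff|^2 \leq \Lambda_0 K$ on the time interval $[0,\lambda_0 K^{-1}]$ via ODE comparison, and then to invoke the Bernstein estimates (Proposition~\ref{prop:Bernstein}) to upgrade this to control of all spatial derivatives of $\sff$.

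For the $L^\infty$ bound, I would start from the evolution equation \eqref{eq:evolve A squared} and estimate the zero-order reaction terms using the Andrews--Baker-type inequality \eqref{eq:AnBa reaction 2} together with the crude bound $|\langle\sff,\sff\rangle^\top|^2\le|\sff|^4$ (note that neither bound requires the pinching hypothesis). After discarding the favourable term $-2nK|\mathring\sff|^2$ and the gradient term, this gives a differential inequality of the form
$$(\pd_t-\Delta)|\sff|^2\le 3|\sff|^4+2nK|\sff|^2.$$
Because $X$ is properly immersed and the initial data has bounded curvature, a barrier argument (or the maximum principle on geodesic balls combined with the standard short-time existence theory for bounded-curvature data) shows that $f(t)\doteqdot\sup_{M\times\{t\}}|\sff|^2$ is Lipschitz and satisfies, in the viscosity sense, the ODE
$$f'(t)\le 3f(t)^2+2nKf(t),\qquad f(0)\le\Theta K=\tfrac{1}{2}\Lambda_0K.$$

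Integrating this Bernoulli-type ODE explicitly yields
$$f(t)\le\frac{2nKf(0)\vts\mathrm{e}^{2nKt}}{2nK-3f(0)(\mathrm{e}^{2nKt}-1)},$$
which remains finite until $\mathrm{e}^{2nKt}=1+\tfrac{2nK}{3f(0)}\ge 1+\tfrac{4n}{3\Lambda_0}>1+\tfrac{2n}{3\Lambda_0}$; standard parabolic continuation then gives the lower bound \eqref{eq:class C universal T bound} on $T$. A direct computation of the level sets of the solution shows that $f(t)\le\Lambda_0K=2\Theta K$ on an interval $[0,t_*]$ with $\mathrm{e}^{2nKt_*}=1+\tfrac{n}{n+3\Theta}$; since $3\Lambda_0=6\Theta>3\Theta$, this $t_*$ strictly exceeds the stated $\lambda_0 K^{-1}$ (for which $\mathrm{e}^{2n\lambda_0}=1+\tfrac{n}{n+3\Lambda_0}$), so in particular $|\sff|^2\le\Lambda_0K$ throughout $[0,\lambda_0K^{-1}]$.

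With the $L^\infty$ bound in hand, Proposition~\ref{prop:Bernstein} applied on $[0,\lambda_0 K^{-1}]$ with $\lambda=\lambda_0$ and $\Lambda_0$ as above yields $t^k|\cd^k\sff|^2\le\widetilde\Lambda_kK$ for every $k$, and evaluating at $t=\lambda_0 K^{-1}$ produces \eqref{eq:class C universal interior estimates} with $\Lambda_k\doteqdot\widetilde\Lambda_k/\lambda_0^k$, a constant depending only on $n$, $k$, and $\Theta$ as required. The main (though essentially routine) obstacle is the precise bookkeeping needed to match the exact numerical constants in \eqref{eq:lambdas}--\eqref{eq:class C universal T bound}; it would be possible to give slightly sharper or slightly more generous constants, but the values stated are convenient choices that depend only on the initial curvature bound $\Theta$.
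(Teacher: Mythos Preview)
Your proposal is correct and follows essentially the same approach as the paper: derive the differential inequality $(\pd_t-\Delta)|\sff|^2\le(3|\sff|^2+2nK)|\sff|^2$, apply ODE comparison to obtain both the lower bound on $T$ and the uniform bound $|\sff|^2\le\Lambda_0K$ on $[0,\lambda_0K^{-1}]$, and then invoke the Bernstein estimates. Your explicit integration of the Bernoulli ODE is in fact slightly sharper than the paper's stated barrier (which is a looser upper bound for the same ODE solution), and your verification that $t_*>\lambda_0K^{-1}$ is correct. One minor point: the coefficient $3$ in the reaction term comes from applying the Li--Li inequality \cite[Proposition~3]{LiLi} directly to $\sff$ (giving $|\langle\sff,\sff\rangle^\top|^2+|\sff\wedge\sff|^2\le\tfrac{3}{2}|\sff|^4$), rather than combining the $\Ahat$-version \eqref{eq:AnBa reaction 2} with the crude bound you mention, which would yield a worse constant; either route suffices, but only the former recovers the exact numerics stated.
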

\begin{proof}
Since
\[
\max_{M\times\{0\}}\vert{\sff}\vert^{2}\leq \Lambda_0K/2\,,
\]
a straightforward \textsc{ode} comparison argument applied to the inequality
\[
(\pd_t-\Delta)\vert{\sff}\vert^2\leq (3\vert\sff\vert^2+2nK)\vert\sff\vert^2
\]
yields
\[
\max_{M\times\{t\}}\vert{\sff}\vert^{2}\leq\frac{nK}{\left(3+\frac{2n}{\Lambda_0}\right)\mathrm{e}^{-2nKt}-3}\,.
\]
We immediately obtain \eqref{eq:class C universal T bound} and
\ba\label{eq:curvature_bounded_gradient_estimate}
\vert{\sff}\vert^2(\vts\cdot\vts,t)\leq \Lambda_0K\;\;\text{for all}\;\; t\leq \lambda_0K^{-1}\,.
\ea
The claim \eqref{eq:class C universal interior estimates} now follows from the Bernstein estimates (Proposition \ref{prop:Bernstein}).
\end{proof}

Modifying an argument of Huisken \cite[Theorem 6.1]{Hu84} and Huisken--Sinestrari \cite[Theorem 6.1]{HuSi09}, we can now obtain a pointwise estimate for the gradient of the second fundamental form which holds up to the singular time.

\begin{proposition}[Gradient estimate (cf. {\cite[Theorem 6.1]{HuSi09}})]\label{prop:gradient estimate}
Let $X:M\times[0,T)\to S_K^{n+\ell}$, $n\geq 5$, be a solution to mean curvature flow with initial condition in the class $\mathcal{C}_K^{n,\ell}(\alpha,V,\Theta)$. There exist $c=c(n,\ell,\alpha,\Theta)<\infty$, $\eta_0=\eta_0(n)>0$ and, for every $\eta\in(0,\eta_0)$, $C_\eta=C_\eta(n,\alpha,V,\Theta,\eta)<\infty$ such that
\begin{equation}\label{eq:gradient estimate}
\frac{\vert\cd \sff\vert^2}{\vert\mn\vert^2+K}+c\left(\vert{\sff}\vert^2-\frac{1}{n-1}\vert\mn\vert^2\right)\le\left(\eta+\frac{1}{n-1}\right)\vert\mn\vert^2+ C_\eta K\mathrm{e}^{-2Kt}
\end{equation}
in $M\times [\lambda_0K^{-1},T)$, where $\lambda_0$ is defined by \eqref{eq:lambdas}.
\end{proposition}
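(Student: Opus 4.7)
The plan is to adapt the Huisken--Sinestrari gradient estimate (cf. \cite[Theorem 6.1]{HuSi09}) to this higher codimension spherical setting by combining a normalised gradient of the second fundamental form with the cylindrical defect. Set $W\doteqdot\vert\mn\vert^2+K$, $f\doteqdot\vert\cd\sff\vert^2/W$, and $g\doteqdot\vert\sff\vert^2-\tfrac{1}{n-1}\vert\mn\vert^2$. For a constant $c=c(n,\ell,\alpha,\Theta)>0$ to be chosen, I introduce the auxiliary function
\[
\psi \doteqdot \Bigl(f + cg - \bigl(\eta+\tfrac{1}{n-1}\bigr)\vert\mn\vert^2\Bigr)\mathrm{e}^{2Kt},
\]
so that \eqref{eq:gradient estimate} is equivalent to the uniform bound $\psi\le C_\eta K$ on $M\times[\lambda_0 K^{-1},T)$. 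The initial value at $t=\lambda_0 K^{-1}$ is controlled via Proposition \ref{prop:class C universal interior estimates}, and the cylindrical estimate (Proposition \ref{prop:cylindrical estimate}) keeps the apparently large positive piece $cg\cdot\mathrm{e}^{2Kt}$ in check. It remains to run a parabolic maximum principle argument to rule out a new positive interior maximum of $\psi$ exceeding the threshold $C_\eta K$.

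I would compute $(\pd_t-\Delta)\psi$ from three ingredients: the Bernstein-type inequality \eqref{eqn_evolderiv}, the identity \eqref{eq:evolve H squared}, and the evolution of $g$, which follows from the proof of Proposition \ref{prop:preserving pinching} specialised to $a=\tfrac{1}{n-1}$ and $b=0$. The latter produces the negative gradient contribution $-c(\vert\cd\sff\vert^2-\tfrac{1}{n-1}\vert\cd^\perp\mn\vert^2)\le -c\gamma_0\vert\cd\sff\vert^2$ by the Kato inequality \eqref{eq:Kato inequality}, where $\gamma_0=\gamma_0(n)>0$. Applying the quotient rule to $f$ together with \eqref{eqn_evolderiv} yields a bad gradient reaction $c_1(\vert\sff\vert^2+K)\vert\cd\sff\vert^2/W$; the preserved pinching $\vert\sff\vert^2\le C_\alpha W$ from \eqref{eq:uniform quadratic pinching} converts this to $c_1C_\alpha\vert\cd\sff\vert^2$. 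Choosing $c$ so that $c\gamma_0>2c_1C_\alpha$ absorbs this bad term with margin to spare. Cross terms arising from the quotient rule are controlled by Young's inequality against the remaining $-c\gamma_0\vert\cd\sff\vert^2$ and the good term $-2\vert\cd^2\sff\vert^2/W$ from \eqref{eqn_evolderiv}.

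With gradient terms handled, the residual reaction in $\mathrm{e}^{-2Kt}(\pd_t-\Delta)\psi$ is a sum of low-order terms in $\vert\sff\vert$, $\vert\mn\vert$, and $K$ multiplied by $f$, $g$, or $\vert\mn\vert^2$, plus the exponential-weight contribution $2K\psi$. At a hypothetical positive interior maximum of $\psi$, one has $f > (\eta+\tfrac{1}{n-1})\vert\mn\vert^2-cg$; using the cylindrical estimate to replace $g$ by $\eta\vert\mn\vert^2+C_\eta K\mathrm{e}^{-2Kt}$ in the residual bad terms, and exploiting the dominating contribution $-(\eta+\tfrac{1}{n-1})\cdot 2nK\vert\mn\vert^2$ extracted from $-(\eta+\tfrac{1}{n-1})(\pd_t-\Delta)\vert\mn\vert^2$, one reaches $(\pd_t-\Delta)\psi\le 0$ whenever $\psi$ is sufficiently large. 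Standard maximum principle arguments then yield the desired bound.

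The principal obstacle is the tight bookkeeping required to propagate the sharp $\mathrm{e}^{-2Kt}$ decay from the cylindrical estimate to the gradient: $c$ must be large enough to defeat the Bernstein-type bad gradient term, yet all residual $K$-scale reaction terms must be controlled by $-2K\psi$ rather than by some smaller multiple $-\delta K\psi$ with $\delta<1$. Some care is additionally needed at points where $\mn=0$ (where $W=K$ and the principal-normal decomposition of $\sff$ degenerates); there the bad reaction in \eqref{eqn_evolderiv} simplifies, but the argument must patch the two regimes together, for example via a suitable smoothing of $f$ and $g$ across $\{\mn=0\}$.
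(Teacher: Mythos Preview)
Your scheme differs from the paper's in a way that turns out to be decisive. The paper does not form the additive combination $f+cg$ with $f=\vert\cd\sff\vert^2/(\vert\mn\vert^2+K)$; instead it bounds the ratio $\vert\cd\sff\vert^2/(G_\eta G_0)$, where
\[
G_\eta \doteqdot 2C_\eta K\mathrm{e}^{-2Kt}+\bigl(\eta+\tfrac{1}{n-1}\bigr)\vert\mn\vert^2-\vert\sff\vert^2,\qquad
G_0 \doteqdot 4(2-\alpha_n)K+\tfrac{1}{n-2+\alpha_n}\vert\mn\vert^2-\vert\sff\vert^2
\]
are positive by the cylindrical estimate and the preserved pinching respectively. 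The crucial feature is that, via the Kato inequality \eqref{eq:Kato inequality}, for $\eta$ below a dimensional threshold one has
\[
(\pd_t-\Delta)G_\eta\ \ge\ -2n(\vert\sff\vert^2+K)G_\eta+\beta\vert\cd\sff\vert^2
\]
with $\beta=\beta(n)>0$. In the quotient this produces the term $-\beta\,\tfrac{\vert\cd\sff\vert^2}{G_\eta}\cdot\tfrac{\vert\cd\sff\vert^2}{G_\eta G_0}$, so that at an interior maximum $\tfrac{\vert\cd\sff\vert^2}{G_\eta}$ is forced below a multiple of $\vert\sff\vert^2+K$; since $G_0$ is comparable to $\vert\sff\vert^2+K$, the ratio is bounded.

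Your denominator $W=\vert\mn\vert^2+K$ lacks this feature: by \eqref{eq:evolve H squared},
\[
(\pd_t-\Delta)W=2\vert\Wein(\cdot,\mn)\vert^2+2nK\vert\mn\vert^2-2\vert\cd^\perp\mn\vert^2,
\]
and the final term has the wrong sign. The quotient rule for $f=\vert\cd\sff\vert^2/W$ therefore produces a reaction term $+2f\vert\cd^\perp\mn\vert^2/W$, which (since $\vert\cd^\perp\mn\vert^2\le n\vert\cd\sff\vert^2$) is of order $f^2$. Your compensating contribution from $cg$, namely $-2c\gamma_0\vert\cd\sff\vert^2=-2c\gamma_0 fW$, is only \emph{linear} in $f$ and cannot absorb a term of order $f^2$ without already knowing that $f$ is bounded by a fixed multiple of $W$ --- which is the conclusion being sought. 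The same obstruction recurs when Young's inequality is applied to the cross term $\tfrac{2}{W}\inner{\cd f}{\cd W}$. Enlarging $c$ does not help, since the quadratic term carries no factor of $c$. The remedy is to replace $W$ by a quantity whose heat operator yields a \emph{positive} $\vert\cd\sff\vert^2$ contribution (such as $G_\eta$), at which point the argument essentially reduces to the paper's.
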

Note that the conclusion is not vacuous since, by Proposition \ref{prop:class C universal interior estimates}, the maximal existence time of a solution with initial data in the class $\mathcal{C}_K^n(\alpha,V,\Theta)$ is at least $\frac{1}{2nK}\log\left(1+\frac{2n}{3\Lambda_0}\right)>\lambda_0 K^{-1}$. 

Setting $\eta=\eta_0$, say, yields the cruder estimate
\begin{equation}\label{eq:gradient estimate eta=1}
\vert\cd \sff\vert^2\leq C(\vert\mn\vert^4+K^2)\vts,
\end{equation}
where $C=C(n,\alpha,V,\Theta)$.

\begin{proof}[Proof of Proposition \ref{prop:gradient estimate}]
We proceed as in \cite[Theorem 6.1]{HuSi09}. By \eqref{eqn_evolderiv},
\bann
(\partial_t- \Delta)|\nabla \sff|^2 \leq{}& -2 |\nabla^2 \sff|^2 + c\lb |\sff|^2+K\rb|\nabla \sff|^2\,.
\eann
We will control the bad term using the good term in the evolution equation for $\vert\sff\vert^2$ and the Kato inequality \eqref{eq:Kato inequality}.
 
By the cylindrical estimate, given any $\eta>0$ we can find $C_\eta=C_\eta(n,\alpha,V,\Theta,\eta)>2$ such that
\[
\vert\sff\vert^2-\tfrac{1}{n-1}\vert\mn\vert^2\leq \eta \vert\mn\vert^2+C_\eta K\mathrm{e}^{-2Kt}\,,
\]
and hence
\[
G_\eta\doteqdot 2C_\eta K\mathrm{e}^{-2Kt}+\left(\eta+\tfrac{1}{n-1}\right)\vert\mn\vert^2-\vert\sff\vert^2\geq C_\eta K\mathrm{e}^{-2Kt}>0\,.
\]
By the initial pinching condition,
\[
G_0\doteqdot 4(2-\alpha_n)K+\tfrac{1}{n-2+\alpha_n}\vert\mn\vert^2-\vert\sff\vert^2\geq 2(2-\alpha_n)K>0\,.
\]

Arguing as in \eqref{eq:evolve g eta H nonzero} and \eqref{eq:evolve g eta H zero}, but keeping part of the gradient terms using the Kato inequality, we obtain
\bann
(\pd_t-\Delta)G_\eta\ge{}&-2n\vert\sff\vert^2(2C_\eta K\mathrm{e}^{-2Kt}-G_\eta)-4C_\eta K^2\mathrm{e}^{-2Kt}+\beta\vert\cd\sff\vert^2\\
\ge{}&-2n(\vert{\sff}\vert^2+K)G_\eta+\beta\vert\cd\sff\vert^2\vts,
\eann
where $\beta\doteqdot \frac{ 3}{n+2}-\frac{1}{n-1}=\frac{2n-5}{(n+2)(n-1)}$, so long as $\eta\leq \left(1-\frac{3}{n+2}\right)\beta$.

Similarly,
\bann
(\pd_t-\Delta)G_0
\geq{}&-2n(\vert{\sff}\vert^2+K)G_0\,.
\eann

We seek a bound for the ratio $\frac{\vert\cd \sff\vert^2}{G_\eta G_0}$. So suppose that $\frac{\vert \cd \sff \vert^2}{G_\eta G_0}$ attains a (parabolic) interior local maximum at $(x_0,t_0)$. Then, at $(x_0,t_0)$,
\bann
0=\cd_k\frac{\vert\cd  \sff \vert^2}{G_\eta G_0}=2\frac{\inner{\cd_k\cd  \sff }{\cd  \sff }}{G_\eta G_0}-\frac{\vert\cd  \sff \vert^2}{G_\eta G_0}\lb\frac{\cd_k G_\eta}{G_\eta}+\frac{\cd_kG_0}{G_0}\rb
\eann
and hence
\bann
4\frac{\vert\cd  \sff \vert^2}{G_\eta G_0}\inner{\frac{\cd G_\eta}{G_\eta}}{\frac{\cd G_0}{G_0}}\leq{}&\frac{\vert\cd  \sff \vert^2}{G_\eta G_0}\left\vert\frac{\cd G_\eta}{G_\eta}+\frac{\cd G_0}{G_0}\right\vert^2\leq4\frac{\vert\cd^2  \sff \vert^2}{G_\eta G_0}\,.
\eann
Furthermore, at $(x_0,t_0)$,
\bann
0\leq (\pd_t-\Delta)\frac{\vert\cd  \sff \vert^2}{G_\eta G_0}={}&\frac{(\pd_t-\Delta)\vert\cd  \sff \vert^2}{G_\eta G_0}-\frac{\vert\cd  \sff \vert^2}{G_\eta G_0}\lb\frac{(\pd_t-\Delta)G_\eta}{G_\eta}+\frac{(\pd_t-\Delta)G_0}{G_0}\rb\\
{}&+\frac{2}{G_\eta G_0}\inner{\cd\frac{\vert\cd  \sff \vert^2}{G_\eta G_0}}{\cd(G_\eta G_0)}+2\frac{\vert\cd  \sff \vert^2}{G_\eta G_0}\inner{\frac{\cd G_\eta}{G_\eta}}{\frac{\cd G_0}{G_0}}\\
\leq{}&\frac{\vert\cd  \sff \vert^2}{G_\eta G_0}\lb (c+4n)(\vert \sff \vert^2+K)-\beta\frac{\vert\cd  \sff \vert^2}{G_\eta}\rb
\eann
and hence
\bann
\frac{\vert\cd  \sff \vert^2}{G_\eta G_0}\leq{}& \frac{(c+4n)\beta^{-1}(\vert \sff \vert^2+K)}{4(2-\alpha_n)K+\frac{1}{n-2+\alpha_n}\vert\mn\vert^2-\vert \sff \vert^2}.
\eann
Since the solution is uniformly pinched,
\[
\vert \sff \vert^2\le \tfrac{1}{n-2+\alpha}\vert\mn\vert^2+2(2-\alpha)K\,,
\]
we obtain, at $(x_0,t_0)$,
\bann
\frac{\vert\cd  \sff \vert^2}{G_\eta G_0}
\le{}&C\,,
\eann
where $C$ depends only on $n$, $k$ and $\alpha$.

On the other hand, since $G_0>C_0K\doteqdot 2(2-\alpha_n)K$ and $G_\eta>C_\eta K\mathrm{e}^{-2Kt}$, if no interior local parabolic maxima are attained, then, by Proposition \ref{prop:class C universal interior estimates}, we have for any $t\ge \lambda_0K^{-1}$
\bann
\max_{\M\times \{t\}}\frac{\vert\cd{\sff }\vert^2}{G_0G_\eta}\leq{}&\max_{\M\times \{\lambda_0K^{-1}\}}\frac{\vert\cd{\sff }\vert^2}{G_0G_\eta}\\
\leq{}&\max_{\M\times \{\lambda_0K^{-1}\}}\frac{\vert\cd{\sff }\vert^2}{C_0C_\eta K^2\mathrm{e}^{-2\lambda_0}}\\
\leq{}&\frac{\Lambda_1\mathrm{e}^{2\lambda_0}}{C_0C_\eta}\\
\leq{}&\Lambda_1\mathrm{e}^{2\lambda_0}\,.
\eann
The theorem follows.
\end{proof}

\subsection{Higher order estimates}

The gradient estimate can be used to bound the first order terms which arise in the evolution equation for $\cd^2{\sff}$. A straightforward maximum principle argument exploiting this observation yields an analogous estimate for $\cd^2{\sff}$.

\begin{proposition}[Hessian estimate (cf. \cite{Hu84,HuSi09})]\label{prop:Hessian estimate}
Let $X:M\times[0,T)\to S_K^{n+1}$, $n\geq 5$, be a solution to mean curvature flow with initial condition in the class $\mathcal{C}_K^{n,\ell}(\alpha,V,\Theta)$. There exists $C=C(n,\alpha,V,\Theta)$ such that
\ba\label{eq:scale_invariant_Hessian_estimate}
\vert\cd^2{\sff}\vert^2\leq C(\vert\mn\vert^{6}+K^{3})\;\;\text{in}\;\; M \times[\lambda_0K^{-1},T)\,.
\ea
\end{proposition}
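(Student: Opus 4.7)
The plan is to mimic the proof of the gradient estimate (Proposition \ref{prop:gradient estimate}), applying the parabolic maximum principle to a suitable auxiliary function constructed from $\vert\cd^2\sff\vert^2$, $\vert\cd\sff\vert^2$, and $W\doteqdot\vert\mn\vert^2+K$. This is the standard iterated Bernstein strategy of \cite[Proposition 6.3]{HuSi09} adapted to the present setting.

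First I would reduce the rough evolution equations using the previously established estimates. Preservation of pinching (Proposition \ref{prop:preserving pinching}) yields $\vert\sff\vert^2+K\le CW$, while the rough gradient estimate \eqref{eq:gradient estimate eta=1} gives $\vert\cd\sff\vert^2\le CW^2$ and in particular $\vert\cd\sff\vert^4\le CW^2\vert\cd\sff\vert^2$, where $C=C(n,\alpha,V,\Theta)$. Inserted into \eqref{eqn_evolderiv2} and \eqref{eqn_evolderiv}, these give
\[(\pd_t-\Delta)\vert\cd^2\sff\vert^2\le -2\vert\cd^3\sff\vert^2+CW\vert\cd^2\sff\vert^2+CW^2\vert\cd\sff\vert^2,\]
\[(\pd_t-\Delta)\vert\cd\sff\vert^2\le -2\vert\cd^2\sff\vert^2+CW\vert\cd\sff\vert^2,\]
while \eqref{eq:evolve H squared} together with $\vert\Wein(\cdot,\mn)\vert^2\le\vert\sff\vert^2\vert\mn\vert^2$ and $\vert\nabla W\vert\le 2\vert\mn\vert\vert\cd^\perp\mn\vert$ gives the two-sided control $\vert(\pd_t-\Delta)W\vert\le CW^2$ and $\vert\nabla W\vert^2\le CW\vert\cd\sff\vert^2\le CW^3$.

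Next I would apply the maximum principle to the auxiliary function
\[
P\doteqdot\vert\cd^2\sff\vert^2+N_1W\vert\cd\sff\vert^2,
\]
where $N_1=N_1(n,\alpha,V,\Theta)$ is a large constant to be chosen. Using the product rule $(\pd_t-\Delta)(uv)=u(\pd_t-\Delta)v+v(\pd_t-\Delta)u-2\langle\nabla u,\nabla v\rangle$, the cross term $\langle\nabla W,\nabla\vert\cd\sff\vert^2\vts\rangle$ is absorbed via Young's inequality into the good term $-2W\vert\cd^2\sff\vert^2$ arising from the evolution of $W\vert\cd\sff\vert^2$; simultaneously, $\vert\cd\sff\vert^4$ is controlled by $\vert\cd\sff\vert^2\cdot CW^2$ and then, after multiplication by $N_1$, by the evolution terms. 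Choosing $N_1$ large enough that $-N_1W\vert\cd^2\sff\vert^2$ dominates $CW\vert\cd^2\sff\vert^2+CW^2\vert\cd\sff\vert^2$, I expect to arrive at a schematic inequality of the form
\[
(\pd_t-\Delta)P\le -\vert\cd^3\sff\vert^2+CWP+CW^4.
\]
Comparing $P$ with $N_2W^3$ via the same strategy as in Proposition \ref{prop:gradient estimate} — namely examining $P-N_2W^3$, or equivalently the ratio $P/W^3$, at an interior parabolic maximum over $M\times[\lambda_0K^{-1},t^\ast]$ — and using the critical-point condition together with the Kato-type bound $\vert\nabla\vert\cd^2\sff\vert^2\vts\vert^2\le 4\vert\cd^2\sff\vert^2\vert\cd^3\sff\vert^2$ to absorb the $\vert\nabla W\vert^2$ contribution from $(\pd_t-\Delta)W^3$, I conclude that $P\le N_2W^3$ for an appropriately chosen $N_2=N_2(n,\alpha,V,\Theta)$, hence $\vert\cd^2\sff\vert^2\le N_2W^3$. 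The comparison is initialized at $t=\lambda_0K^{-1}$, where the Bernstein estimates of Proposition \ref{prop:class C universal interior estimates} give $\vert\cd\sff\vert^2\le\Lambda_1K^2$ and $\vert\cd^2\sff\vert^2\le\Lambda_2K^3$, so that $P(\cdot,\lambda_0K^{-1})\le N_2K^3\le N_2W^3$ provided $N_2$ is chosen sufficiently large.

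The main obstacle will be the careful bookkeeping required to verify that every cross term arising in these product-rule computations — in particular the $W\vert\nabla W\vert^2$ contribution from $(\pd_t-\Delta)W^3$ and the various contractions of $\nabla W$ with $\nabla\vert\cd\sff\vert^2$ and $\nabla\vert\cd^2\sff\vert^2$ — is dominated by the good negative contributions $-2\vert\cd^3\sff\vert^2$ and $-2\vert\cd^2\sff\vert^2$ from \eqref{eqn_evolderiv2} and \eqref{eqn_evolderiv}. This forces the two constants $N_1$ and $N_2$ to be chosen in a definite order, with $N_1$ depending only on the absolute constants in the reduced evolution inequalities and $N_2$ depending on $N_1$ and on the Bernstein bound $\Lambda_2$; the scale-invariance of the final estimate with respect to parabolic rescaling then yields the stated conclusion.
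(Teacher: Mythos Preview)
Your strategy of iterated Bernstein is the right general idea, but the specific implementation --- comparing $P=\vert\cd^2\sff\vert^2+N_1W\vert\cd\sff\vert^2$ with $N_2W^3$ for $W=\vert\mn\vert^2+K$ --- does not close. At an interior parabolic maximum of $Q=P/W^3$ you obtain
\[
0\le \frac{(\pd_t-\Delta)P}{W^3}-\frac{3Q}{W}(\pd_t-\Delta)W+\frac{6Q}{W^2}\vert\nabla W\vert^2,
\]
and the denominator contributions are problematic: $(\pd_t-\Delta)W$ contains $-2\vert\cd^\perp\mn\vert^2$, and together with $\vert\nabla W\vert^2\le 4W\vert\cd^\perp\mn\vert^2$ these produce a term $+CQW^{-1}\vert\cd^\perp\mn\vert^2$. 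Bounding this via the gradient estimate $\vert\cd^\perp\mn\vert^2\le CW^2$ yields $+CWQ$, a \emph{reaction} term with no available absorber --- the only good contribution in $(\pd_t-\Delta)P$ is $-\vert\cd^3\sff\vert^2$, which is a third--derivative term and cannot control $CWQ$. The critical--point identity $\nabla P=\tfrac{3P}{W}\nabla W$ lets you trade $\vert\nabla W\vert^2$ for $\vert\nabla P\vert^2$ and hence for $\vert\cd^3\sff\vert^2$, but the coefficient is too large to be swallowed by the $-2\vert\cd^3\sff\vert^2$ you have, and in any case the $\vert\cd^\perp\mn\vert^2$ coming from $(\pd_t-\Delta)W$ itself is not a $\nabla W$ term. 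Your proposed inequality $(\pd_t-\Delta)P\le -\vert\cd^3\sff\vert^2+CWP+CW^4$ is correct, but at the maximum it yields only $0\le CW(1+Q)$, which gives no bound.

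The paper avoids this obstruction by two modifications. First, it uses the \emph{refined} gradient estimate \eqref{eq:gradient estimate}, which gives $\vert\cd\sff\vert^2\le C\vert\mn\vert^2W+CKW\mathrm{e}^{-2Kt}$ rather than merely $\le CW^2$; the crucial gain is the factor $\vert\mn\vert^2$ (not $W$) on the leading term. Second, instead of comparing with a power of $W$, it compares with the pinching function $G=\big(2(2-\alpha_n)K+\tfrac{1}{n-2+\alpha_n}\vert\mn\vert^2-\vert\sff\vert^2\big)^{1/2}$, whose evolution inherits a good \emph{reaction} term $(\pd_t-\Delta)G\ge c_5\vert\mn\vert^2W^{1/2}$ from the Kato inequality. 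Concretely, the paper assembles $f=\vert\cd^2\sff\vert^2/W^{5/2}+c_1\vert\cd\sff\vert^2/W^{3/2}+C_1K^2\vert\cd\sff\vert^2/W^{7/2}$, shows $(\pd_t-\Delta)f\le c_4\vert\mn\vert^2W^{1/2}+C_4K^{3/2}\mathrm{e}^{-4Kt}$, and then subtracts $\tfrac{c_4}{c_5}G$ to obtain a genuine subsolution. The matching $\vert\mn\vert^2$ factors on both sides are exactly what makes this work; with $W$ alone you cannot manufacture such a reaction term.
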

\begin{proof}
We proceed as in \cite[Theorem 6.3]{HuSi09}. Set $W\doteqdot \vert\mn\vert^2+K$. By \eqref{eqn_evolderiv2},
\[
(\pd_t-\Delta)\vert\cd^2{\sff}\vert^2\leq c\left(W\vert\cd^2{\sff}\vert^2+\vert\cd{\sff}\vert^4\right)-2\vert\cd^3{\sff}\vert^2\,,
\]
where $c$ depends only on $n$ and $k$. Recalling \eqref{eq:evolve H squared}, we obtain
\bann
(\pd_t-\Delta)\frac{\vert\cd^2{\sff}\vert^2}{W^\frac{5}{2}}\leq{}& \frac{c}{W^\frac{5}{2}}\lsb W\vert\cd^2{\sff}\vert^2+\vert\cd{\sff}\vert^4\rsb-2\frac{\vert\cd^3{\sff}\vert^2}{W^\frac{5}{2}}\\
{}& \! -5\frac{\vert\cd^2{\sff}\vert^2}{W^\frac{7}{2}}\lsb \vert{\Wein(\cdot,\mn)}\vert^2+nK\vert\mn\vert^2-\vert\cd^\perp\mn\vert^2\rsb\\
{}&-\frac{35}{4}\frac{\vert\cd^2\sff\vert^2}{W^{\frac{7}{2}}}\frac{\vert \cd W\vert^2}{W}+\frac{5}{W^\frac{7}{2}}\inner{\cd\vert\cd^2{\sff}\vert^2}{\cd W}.
\eann

We can use the good third order term on the first line to absorb the ultimate term, since
\bann
\frac{5}{W^\frac{7}{2}}\inner{\cd\vert\cd^2{\sff}\vert^2}{\cd W}\le{}&\frac{10}{W^\frac{7}{2}}\vert\cd^3{\sff}\vert\vert\cd^2\sff\vert\vert\cd W\vert\\
\le {}&\frac{1}{W^\frac{1}{2}}\left(\frac{\vert\cd^3{\sff}\vert^2}{W^2}+25\frac{\vert\cd^2\sff\vert^2\vert\cd W\vert^2}{W^4}\right).
\eann
Estimating
\[
\frac{\vert\cd W\vert^2}{W}\le 4\vert\cd^\perp\mn\vert^2
\]
then yields 
\bann
(\pd_t-\Delta)\frac{\vert\cd^2{\sff}\vert^2}{W^\frac{5}{2}}\leq{}& \frac{c}{W^\frac{5}{2}}\lsb W\vert\cd^2{\sff}\vert^2+\vert\cd{\sff}\vert^4\rsb-\frac{\vert\cd^3{\sff}\vert^2}{W^\frac{5}{2}}
+100\frac{\vert\cd^2{\sff}\vert^2}{W^\frac{7}{2}}\vert\cd^\perp\mn\vert^2.
\eann

Restricting to $t \geq \lambda_0 K^{-1}$ and estimating the first order terms using Proposition \ref{prop:gradient estimate} (and Young's inequality) then yields
\bann
(\pd_t-\Delta)\frac{\vert\cd^2{\sff}\vert^2}{W^{\frac{5}{2}}}\leq{}&c_1\frac{\vert\cd^2{\sff}\vert^2}{W^{\frac{3}{2}}}+C_1K^2\frac{\vert\cd^2{\sff}\vert^2}{W^{\frac{7}{2}}}\mathrm{e}^{-2Kt}\\
{}&+\frac{c_1\vert\mn\vert^4W^2+C_1K^4\mathrm{e}^{-4Kt}}{W^{\frac{5}{2}}}-\frac{\vert\cd^3{\sff}\vert^2}{W^{\frac{5}{2}}}\,,
\eann
where $c_1$ depends only on $n$, $k$, $\alpha$ and $\Theta$, and $C_1$ depends also on $V$.

Similar arguments yield
\bann
(\pd_t-\Delta)\frac{\vert\cd{\sff}\vert^2}{W^{\frac{3}{2}}}
\leq{}&\frac{c_2\vert\mn\vert^2W^3+C_2K^4\mathrm{e}^{-4Kt}}{W^{\frac{5}{2}}}-\frac{\vert\cd^2{\sff}\vert^2}{W^\frac{3}{2}}
\eann
and
\bann
(\pd_t-\Delta)\frac{\vert\cd{\sff}\vert^2}{W^{\frac{7}{2}}}
\le{}&c\frac{\vert\cd \sff\vert^2}{W^{\frac{9}{2}}}\left(W^2+\vert\cd^\perp\mn\vert^2\right)-\frac{\vert \cd^2\sff\vert^2}{W^{\frac{7}{2}}}\\
\leq{}&\frac{c_3\vert\mn\vert^2W^3+C_3K^4\mathrm{e}^{-4Kt}}{W^{\frac{9}{2}}}-\frac{\vert\cd^2{\sff}\vert^2}{W^\frac{7}{2}}\,,
\eann
where $c_2$ and $c_3$ depend only on $n$, $\alpha$, and $\Theta$, and $C_2$ and $C_3$ depend also on $V$.

Setting
\[
f\doteqdot\frac{\vert\cd^2{\sff}\vert^2}{W^{\frac{5}{2}}}+c_1\frac{\vert\cd{\sff}\vert^2}{W^{\frac{3}{2}}} +C_1K^2\frac{\vert\cd{\sff}\vert^2}{W^{\frac{7}{2}}}
\]
and estimating $W\ge K$, we obtain
\bann
(\pd_t-\Delta)f\leq{}&\frac{c_1\vert\mn\vert^4W^2+C_1K^4\mathrm{e}^{-4Kt}}{W^{\frac{5}{2}}}+c_1\frac{c_2\vert\mn\vert^2W^3+C_2K^4\mathrm{e}^{-4Kt}}{W^{\frac{5}{2}}}\\
{}&+C_1K^2\frac{c_3\vert\mn\vert^2W^3+C_3K^4\mathrm{e}^{-4Kt}}{W^{\frac{9}{2}}}\\
\le{}&\frac{(c_1+c_1c_2+c_3C_1)\vert\mn\vert^2W^3+(C_1+c_1C_2+C_1C_3)K^4\mathrm{e}^{-4Kt}}{W^{\frac{5}{2}}}\\
\le{}&(c_1+c_1c_2+c_3C_1)\vert\mn\vert^2W^{\frac{1}{2}}+(C_1+c_1C_2+C_1C_3)K^\frac{3}{2}\mathrm{e}^{-4Kt}\\
\doteqdot{}&c_4\vert\mn\vert^2W^{\frac{1}{2}}+C_4K^\frac{3}{2}\mathrm{e}^{-4Kt}\,.
\eann
On the other hand, the function $G$ defined by
 \[
G^2\doteqdot 2(2-\alpha_n)K+\tfrac{1}{n-2+\alpha_n}\vert\mn\vert^2-\vert\sff\vert^2
\]
satisfies
\[
(\pd_t-\Delta)G\ge c_5\vert\mn\vert^2W^{\frac{1}{2}}\vts,
\]
where $c_5$ depends only on $n$ and $\alpha$. Thus,
\bann
(\pd_t-\Delta)\lb f-\frac{c_4}{c_5}G+\frac{C_4}{4}K^{\frac{1}{2}}\mathrm{e}^{-4Kt}\rb\leq{}&0\,.
\eann
The maximum principle and Proposition \ref{prop:class C universal interior estimates} then yield
\bann
\max_{M\times\{t\}}\left(f-\frac{c_4}{c_5}G\right)\leq{}&\max_{M\times\{\lambda_0K^{-1}\}}\left(f-\frac{c_4}{c_5}G\right)+\frac{C_4}{4}K^{\frac{1}{2}}\lb\mathrm{e}^{-4 \lambda_0}-\mathrm{e}^{-4 Kt}\rb\\
\leq{}&C_5K^{\frac{1}{2}}
\eann
for all $t\geq \lambda_0K^{-1}$, where $C_5$ depends only on $n$, $\alpha$, $V$, and $\Theta$. We conclude that
\[
\vert\cd^2{\sff}\vert^2\leq cW^3+CK^{\frac{1}{2}}W^{\frac{5}{2}}\;\;\text{in}\;\; M\times [\lambda_0K^{-1},T)\,,
\]
where $c$ and $C$ depend only on $n$, $\alpha$, $V$, and $\Theta$. The claim now follows from Young's inequality.
\end{proof}

Applying the Hessian estimate in conjunction with the the rough evolution equation
\[
(\cd_t-\Delta)\sff=\sff\ast\sff\ast\sff+K\ast\sff
\] 
for $\sff$ yields an analogous bound for $\cd_t \sff$, and hence, in particular, for the time derivative of $\mn$. Thus, in high curvature regions, we obtain the following a priori bounds for $\cd^\perp\mn$ and $\cd_t^\perp\mn$.

\begin{corollary}\label{cor:spacetime grad H bound}
Let $X:M^n\times[0,T)\to S_K^{n+\ell}$, $n\geq 2$, be a solution to mean curvature flow with initial condition in the class $\mathcal{C}_K^{n,\ell}(\alpha,V,\Theta)$. There exist $h_\sharp=h_\sharp(n,\alpha,V,\Theta)$ and $c_\sharp=c_\sharp(n,\alpha,V,\Theta)$ such that
\begin{equation}
\vert\mn\vert(x,t)\geq h_\sharp \sqrt K
\;\implies \;
\frac{\vert \cd^\perp\mn\vert}{\vert\mn\vert^2}(x,t)\leq c_\sharp\;\;\text{and}\;\;\frac{\vert \cd^\perp_t\mn\vert}{\vert\mn\vert^3}(x,t)\leq \frac{c_\sharp^2}{2}.
\end{equation}
\end{corollary}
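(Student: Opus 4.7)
The plan is to combine the gradient and Hessian estimates (Propositions~\ref{prop:gradient estimate} and~\ref{prop:Hessian estimate}) with the rough evolution equation for $\sff$. Since $\mn = \tr_{\fff^\top}\sff$, we have $\vert\cd^\perp\mn\vert^2 \leq n\vert\cd\sff\vert^2$, so the crude form \eqref{eq:gradient estimate eta=1} of the gradient estimate yields $\vert\cd^\perp\mn\vert^2 \leq nC(\vert\mn\vert^4+K^2)$ for $t \geq \lambda_0 K^{-1}$, with $C=C(n,\alpha,V,\Theta)$. Requiring $h_\sharp\geq 1$, the region $\vert\mn\vert^2 \geq h_\sharp^2 K$ satisfies $K^2\leq \vert\mn\vert^4$, and we conclude $\vert\cd^\perp\mn\vert \leq c_0\vert\mn\vert^2$ for $c_0 \doteqdot \sqrt{2nC}$. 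In the complementary interval $t < \lambda_0 K^{-1}$, the a priori bound $\vert\sff\vert^2 \leq \Lambda_0 K$ from Proposition~\ref{prop:class C universal interior estimates} forces $\vert\mn\vert^2\leq n\Lambda_0 K$, so enlarging $h_\sharp$ past $\sqrt{n\Lambda_0}$ renders the hypothesis $\vert\mn\vert \geq h_\sharp\sqrt K$ vacuous there.

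For the time derivative, I would start from the rough form of the evolution equation for $\sff$, namely $\cd_t\sff = \Delta\sff + \sff\ast\sff\ast\sff + K\ast\sff$, which follows directly from \eqref{eq:evolve sff}. Pointwise this gives $\vert\cd_t\sff\vert \leq \vert\Delta\sff\vert + c(\vert\sff\vert^3+K\vert\sff\vert)$ with $c=c(n)$. The Hessian estimate \eqref{eq:scale_invariant_Hessian_estimate} controls $\vert\Delta\sff\vert \leq \sqrt n\,\vert\cd^2\sff\vert \leq \sqrt{nC'}(\vert\mn\vert^3+K^{3/2})$, while the preserved pinching condition \eqref{eq:uniform quadratic pinching} implies $\vert\sff\vert^3 + K\vert\sff\vert \leq C''(\vert\mn\vert^3+K^{3/2})$. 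Tracing in $\fff^\top$ delivers $\vert\cd_t^\perp\mn\vert \leq n\vert\cd_t\sff\vert \leq C'''(\vert\mn\vert^3+K^{3/2})$, and in the region $\vert\mn\vert^2 \geq h_\sharp^2 K$ this becomes $\vert\cd_t^\perp\mn\vert \leq c_1\vert\mn\vert^3$ for a suitable $c_1=c_1(n,\alpha,V,\Theta)$.

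To conclude, I would set $c_\sharp \doteqdot \max\{c_0,\sqrt{2c_1}\}$ so that the first bound is retained and the second holds with the exact coefficient $c_\sharp^2/2$. All constants depend only on $n,\alpha,V,\Theta$, as required. There is no substantive obstacle: the statement is essentially a repackaging of the gradient and Hessian estimates, and the slightly unusual quadratic relation $c_\sharp^2/2$ is presumably calibrated so that the corollary can be fed into standard Huisken--Sinestrari-type arguments bounding the doubling of $\vert\mn\vert$ over small parabolic neighbourhoods in the forthcoming surgery construction.
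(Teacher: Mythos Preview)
Your proposal is correct and follows essentially the same route as the paper: the paper simply remarks that combining the Hessian estimate with the rough evolution equation $(\cd_t-\Delta)\sff=\sff\ast\sff\ast\sff+K\ast\sff$ yields the bound on $\cd_t\sff$ (and hence on $\cd_t^\perp\mn$), while the spatial bound comes from the gradient estimate. Your treatment of the early interval $t<\lambda_0K^{-1}$ by making the hypothesis vacuous is a sensible detail that the paper leaves implicit.
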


\subsection{Neck detection}

The cylindrical and gradient estimates can be used to show that regions of very high curvature which are not pinched in the sense of Andrews and Baker must form high quality `neck' regions.

\begin{definition}
Let $X: M\to S_K^{n+\ell}\subset \R^{n+\ell+1}$ be an immersed submanifold of $S_K^{n+\ell}$. A point $x\in M$ \emph{lies at the center of an} $(\varepsilon,k,L)$-\emph{neck of size $r$} if the map $\exp_{r^{-1}X(x)}^{-1}\circ (r^{-1}X)$ is $\varepsilon$-cylindrical and $(\varepsilon,k)$-parallel at all points in the induced intrinsic ball of radius $L$ about $x$ in the sense of \cite[Definition 3.9]{HuSi09}.
\end{definition}


\begin{lemma}[Neck detection (cf. {\cite[Lemma 7.4]{HuSi09}})]\label{lem:curvature neck detection}
Let $X:M\times[0,T)\to S_K^{n+\ell}$ be a solution to mean curvature flow with initial condition in the class $\mathcal{C}^{n,\ell}_K(\alpha,V,\Theta)$. Given $\varepsilon\leq \frac{1}{100}$, there exist parameters $\eta_\sharp=\eta_\sharp(n,\ell,\alpha,V,\Theta,\varepsilon)>0$ and $h_\sharp=h_\sharp(n,\ell,\alpha,V,\Theta,\varepsilon)<\infty$ with the following property. If
\bann
\vert\mn\vert(x_0,t_0)\ge h_\sharp\sqrt K\;\;\text{ and }\;\; \big(\vert\sff\vert^2-\tfrac{1}{n-1}\vert\mn\vert^2\big)(x_0,t_0)\ge -\eta_\sharp\vert\mn\vert^2(x_0,t_0)\,,
\eann
then 
\[
\Lambda_{r_0,k,\varepsilon}(x_0,t_0)\leq \varepsilon r_0^{-(k+1)}
\]
for each $k=0,\dots,\lfloor\frac{2}{\varepsilon}\rfloor$, where $r_0\doteqdot  \frac{n-1}{\vert\mn\vert(x_0,t_0)}$,
\[
\Lambda_{r,0,\varepsilon}(x,t)\doteqdot  \max_{\mathcal{B}_{\varepsilon^{-1}r}(x,t)\times(t-10^4r^2,t]}\big\vert\vert\sff\vert^2-\tfrac{1}{n-1}\vert\mn\vert^2\big\vert\,,
\]
and, for each $k\geq 1$,
\[
\Lambda_{r,k,\varepsilon}(x,t)\doteqdot  \max_{\mathcal{B}_{\varepsilon^{-1}r}(x,t)\times(t-10^4r^2,t]}\vert \cd^k{\sff}\vert\,.
\]
\end{lemma}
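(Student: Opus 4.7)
The proof proceeds by a contradiction and compactness argument, closely following the template of \cite[Lemma 7.4]{HuSi09}, with the added complication of the ambient curvature $K$ and the higher codimension.

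Suppose the conclusion fails: for some $\varepsilon\le \tfrac{1}{100}$ and some $k\in\{0,\dots,\lfloor 2/\varepsilon\rfloor\}$, one may produce a sequence of mean curvature flows $X_i:M_i\times[0,T_i)\to S_{K_i}^{n+\ell}$, each with initial data in $\mathcal{C}_{K_i}^{n,\ell}(\alpha,V,\Theta)$, together with points $(x_i,t_i)$ and scales $r_i\doteqdot (n-1)/|\mn|(x_i,t_i)$, satisfying $|\mn|(x_i,t_i)\ge i\sqrt{K_i}$, $(\vert\sff\vert^2-\tfrac{1}{n-1}\vert\mn\vert^2)(x_i,t_i)\ge -\tfrac{1}{i}\vert\mn\vert^2(x_i,t_i)$, yet $\Lambda_{r_i,k,\varepsilon}(x_i,t_i)>\varepsilon r_i^{-(k+1)}$. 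Rescale each flow parabolically by $r_i^{-1}$ and translate $(x_i,t_i)$ to the spacetime origin; this produces a sequence of mean curvature flows $\tilde X_i$ in spheres of curvature $\tilde K_i = r_i^2 K_i \to 0$ (by the hypothesis $|\mn|(x_i,t_i)\ge i\sqrt{K_i}$). On each rescaled flow, $|\tilde{\mn}|(0,0)=n-1$.

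The Bernstein-type estimate (Proposition \ref{prop:class C universal interior estimates}) together with the gradient and Hessian estimates (Propositions \ref{prop:gradient estimate} and \ref{prop:Hessian estimate}) bound $|\tilde\sff|$ and its covariant derivatives on any fixed parabolic cylinder about the origin (one also needs the straightforward higher-order extension of Proposition \ref{prop:Hessian estimate}, obtained inductively from \eqref{eq:evolve derivatives of A}). Combined with Corollary \ref{cor:spacetime grad H bound}, which ensures that $|\tilde{\mn}|$ stays bounded above and below near the origin, standard pointed smooth compactness yields a subsequential smooth limit flow $\tilde X_\infty$ defined on a parabolic neighborhood of the origin in $\R^{n+\ell}$. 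The key point is that this limit is rigid. Indeed, the cylindrical estimate (Proposition \ref{prop:cylindrical estimate}) combined with the pinching assumption at $(x_i,t_i)$ forces, in the limit, the identity $|\sff_\infty|^2=\tfrac{1}{n-1}|\mn_\infty|^2$ at the origin, and then the codimension estimate (Proposition \ref{prop:codimension estimate}), which gives $|\Ahat|^2\le C K^\delta W^{1-\delta}$, forces $\Ahat_\infty\equiv 0$ at the origin since $\tilde K_i/W \to 0$. The strong maximum principle applied to the evolution equations for $g_{\eta}\doteqdot \vert\sff\vert^2-\tfrac{1}{n-1}\vert\mn\vert^2$ (cf. Proposition \ref{prop:cylindrical decay}) and for $\vert\Ahat\vert^2$ then propagates these identities to the entire parabolic neighborhood.

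Consequently, $\tilde X_\infty$ is a mean curvature flow in $\R^{n+\ell}$ which has codimension one (since $\Ahat\equiv 0$ implies $NM$ splits off a flat line bundle), whose second fundamental form satisfies $|\sff|^2=\tfrac{1}{n-1}|\mn|^2$, and which has $|\mn|(0,0)=n-1$. By the classical splitting/classification of such hypersurfaces (see e.g.\ the rigidity statement in the proof of \cite[Lemma 7.4]{HuSi09}), $\tilde X_\infty$ coincides on the parabolic neighborhood with the shrinking round cylinder $\R\times S^{n-1}\subset \R^{n+1}\subset \R^{n+\ell}$ of radius $1$ at time zero. In particular, by smooth convergence, for $i$ sufficiently large $\tilde X_i$ is $\varepsilon$-cylindrical and $(\varepsilon,k)$-parallel on the ball of radius $\varepsilon^{-1}$ about the origin, for every $k\le \lfloor 2/\varepsilon\rfloor$, which contradicts the choice of $(x_i,t_i)$ as a counterexample. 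The hardest part of the argument is verifying this limit rigidity, that is, showing that the combined use of the cylindrical estimate and the codimension estimate in the limit really does force the limit to be the standard round cylinder in $\R^{n+1}$; the ambient curvature plays a role throughout the rescaling, but vanishes in the blow-up limit precisely by the hypothesis $|\mn|(x_i,t_i)\ge h_\sharp\sqrt{K_i}$ with $h_\sharp=i\to\infty$.
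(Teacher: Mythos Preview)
The proposal is correct and follows essentially the same approach as the paper: a contradiction/compactness blow-up argument in the style of \cite[Lemma 7.4]{HuSi09}, using the cylindrical, gradient and Hessian estimates to extract a limit which is then identified as a shrinking cylinder. The paper's proof is only a one-line sketch referring to the cited references, and your write-up supplies the expected details (including the use of the codimension estimate to reduce to codimension one in the limit, which is the only genuinely new ingredient relative to the hypersurface case).
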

\begin{proof}
The result can be obtained by \emph{reductio ad absurdum}, exploiting the cylindrical, gradient and Hessian estimates for the second fundamental form. We do not include the argument since it is similar to that of \cite[Lemma 7.4]{HuSi09} (cf. \cite[Theorem 7.13]{BrHu17}, \cite[Lemma 4.16]{LangfordNguyen} and \cite[Lemma 5.5]{Nguyen2020}).
\end{proof}


By the Gauss equation and the arguments of \cite[\S 3]{Nguyen2020} (cf. \cite[\S 3]{HuSi09}), necks of sufficiently high quality can be integrated (after pulling up to the tangent space) to obtain ``almost hypersurface'' necks (in the tangent space), which can be replaced by a pair of ``convex caps'' in a controlled way.

\subsection{Hypersurface detection}

The codimension estimate can be used to show that, after pulling up to the tangent space, regions of high curvature almost lie in some $(n+1)$-dimensional affine subspace.

\begin{definition}
An immersed submanifold $X: M\to S_K^{n+\ell}\subset \R^{n+\ell+1}$ of $S_K^{n+\ell}$ is $(\varepsilon,k)$-\emph{almost hypersurface} about $x_0\in X$ if, for some $r>0$, the map $\exp_{r^{-1}X(x_0)}^{-1}\circ (r^{-1}X)$ is $(\varepsilon,k)$-almost hypersurface in the sense of \cite[Definition 3.1]{Nguyen2020}. That is, it satisfies
\[
\vert\cd^m\Ahat\vert\le \varepsilon \;\;\text{for each}\;\; m=0,\dots,k\vts.
\]
\end{definition}

The following lemma, combined with the Gauss equation, shows that points of sufficiently large curvature have almost hypersurface neighbourhoods (of `size' $\sim \vert \mn\vert^{-1}$).

\begin{lemma}[Hypersurface detection (cf. {\cite[Lemma 5.8]{Nguyen2020}})]\label{lem:hypersurface detection}
Let $X:M\times[0,T)\to S_K^{n+\ell}$ be a solution to mean curvature flow with initial condition in the class $\mathcal{C}^{n,\ell}_K(\alpha,V,\Theta)$. Given $\varepsilon\leq \frac{1}{100}$, there exist $h_\sharp=h_\sharp(n,\ell,\alpha,V,\Theta,\varepsilon)<\infty$, $L_\sharp=L_\sharp(n,\ell,\alpha,V,\Theta)>0$ and $\theta_\sharp=\theta_\sharp(n,\ell,\alpha,V,\Theta)>0$ with the following property. If
\bann
\vert\mn\vert(x_0,t_0)\ge h_\sharp\sqrt K\vts,
\eann
then 
\[
\sup_{\mathcal{B}_{L_\sharp r_0}(x_0,t_0)\times (t_0-\theta_\sharp r_0^2,t_0]}\vert\cd^k\Ahat\vert\le \varepsilon r_0^{-(k+1)}
\]
for each $k=0,\dots,\lfloor\frac{2}{\varepsilon}\rfloor$, where $r_0\doteqdot  \frac{n-1}{\vert\mn\vert(x_0,t_0)}$.
\end{lemma}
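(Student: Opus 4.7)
The plan is to argue by contradiction via a parabolic blow-up, in direct analogy with the neck detection argument (Lemma \ref{lem:curvature neck detection}). Suppose the claim fails. Then, for some $\varepsilon\in(0,\tfrac{1}{100}]$ and some choice of parameters $n$, $\ell$, $\alpha$, $V$, $\Theta$, we can find a sequence of solutions $X_j:M_j\times[0,T_j)\to S^{n+\ell}_{K_j}$ with initial data in $\mathcal{C}_{K_j}^{n,\ell}(\alpha,V,\Theta)$, points $(x_j,t_j)$ satisfying $\vert\mn\vert(x_j,t_j)\ge j\sqrt{K_j}$, and integers $k_j\leq\lfloor 2/\varepsilon\rfloor$ for which the rescaled bound on $\cd^{k_j}\Ahat$ fails at some point in the parabolic neighbourhood of scale $r_j\doteqdot (n-1)/\vert\mn\vert(x_j,t_j)$. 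After passing to a subsequence we may assume $k_j\equiv k_0$ is constant. Translate $X_j(x_j,t_j)$ to the origin, rotate so that the tangent sphere at the origin is standard, and parabolically dilate by $r_j^{-1}$ to obtain flows $\hat X_j:M_j\times[-r_j^{-2}t_j,r_j^{-2}(T_j-t_j))\to S^{n+\ell}_{r_j^2 K_j}$ with $\hat{\mnN}(x_j,0)=n-1$ and $r_j^2K_j\to 0$ as $j\to\infty$. Proposition \ref{prop:class C universal T bound} ensures that $-r_j^{-2}t_j\to-\infty$.

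Next, the gradient and Hessian estimates (Propositions \ref{prop:gradient estimate} and \ref{prop:Hessian estimate}), together with Corollary \ref{cor:spacetime grad H bound}, imply a uniform bound on $\hat{\mnN}$ on a definite intrinsic parabolic cylinder about $(x_j,0)$ for $j$ large (by the Harnack-type doubling of the mean curvature along parabolic paths). Combined with the Bernstein estimates (Proposition \ref{prop:Bernstein}), this yields smooth subsequential convergence on compact subsets of spacetime to a smooth limiting mean curvature flow $\hat X_\infty:U\times(-\theta,0]\to \R^{n+\ell+1}$ (the ambient sphere flattening as $r_j^2K_j\to 0$). A bootstrap argument, as in the proof of Lemma \ref{lem:curvature neck detection}, progressively enlarges the region of convergence until it contains the closure of the parabolic neighbourhood required for the claim at scale $L_\sharp,\theta_\sharp$.

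The crucial ingredient is now the codimension estimate (Proposition \ref{prop:codimension estimate}), which gives at unscaled points where $\mn\neq 0$
\[
\vert\Ahat\vert^2\le CK_j^\delta\big(\vert\mn\vert^2+K_j\big)^{1-\delta}\vts.
\]
Multiplying by $r_j^2$ to pass to the rescaled flow, one obtains
\[
\vert\hat{\Ahat}\vert^2\le C\big(r_j^2K_j\big)^\delta\big(\vert\hat{\mn}\vert^2+r_j^2K_j\big)^{1-\delta}\to 0
\]
locally uniformly, using that $\vert\hat{\mn}\vert$ is locally bounded and $r_j^2K_j\to 0$. Hence the limit flow satisfies $\hat{\Ahat}_\infty\equiv 0$ on $U\times(-\theta,0]$; that is, the limit flow lies in an affine $(n+1)$-plane. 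By smooth convergence, $\vert\cd^k\hat{\Ahat}_j\vert\to 0$ locally uniformly for each $k\le k_0$, which contradicts the failure of the claimed estimate at $(x_j,0)$ for $j$ large.

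The main obstacle is to ensure that the smooth subsequential convergence extends to a parabolic neighbourhood of sufficient size (governed by the constants $L_\sharp$ and $\theta_\sharp$ to be extracted); this is handled precisely as in \cite[Lemma 7.4]{HuSi09} via the doubling-of-mean-curvature step combined with the iterated bootstrap. Note that $L_\sharp$ and $\theta_\sharp$ may (and need only) depend on $n,\ell,\alpha,V,\Theta$, since the constants in Propositions \ref{prop:gradient estimate}--\ref{prop:Hessian estimate} and Corollary \ref{cor:spacetime grad H bound} do. Once the limit flow is identified as hypersurface, passing from $\hat{\Ahat}_\infty=0$ to the $(\varepsilon,k)$-almost-hypersurface bound on the approximating flows is immediate from the smoothness of convergence.
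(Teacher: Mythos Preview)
Your proposal is correct and follows essentially the same approach as the paper, which simply states that ``the proof is again very similar to that of \cite[Lemma~7.4]{HuSi09}''. You have filled in the blow-up argument that the paper leaves implicit, with the codimension estimate (Proposition~\ref{prop:codimension estimate}) playing the role that the cylindrical estimate plays in the neck detection lemma, forcing $\hat{\Ahat}_\infty\equiv 0$ on the limit.
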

\begin{proof}
The proof is again very similar to that of \cite[Lemma 7.4]{HuSi09}.
\end{proof}


Note that almost hypersurface regions which satisfy our pinching condition do indeed lie close to a genuine ``hypersurface''. This can be proved by an argument similar to \cite[Proposition 2.4]{Naff} (cf. \cite[Theorem 6.3]{Nguyen2020}). We do not require this here, however. Indeed, we only need bounds for $\Ahat$ and $\cd\Ahat$ (note that, under the quadratic pinching condition, the torsion is controlled by $\cd\Ahat$).

\section{The key estimates for surgically modified flows}\label{sec:key estimates surgery}

We need to show that suitable versions of the key estimates still hold in the presence of surgeries. In the following definition, surgery is performed on the middle third of a neck of size $r$ in the obvious way: 
\begin{enumerate}[(i)]
\item Scale by $r^{-1}$ and precompose with $\exp_{r^{-1}X(p)}^{-1}$ to obtain a neck in $T_{r^{-1}X(p)}S_{r^2 K}^{n+\ell}$.
\item Perform surgery on the middle third of this neck in $T_{r^{-1}X(p)}S_{r^2K}^{n+\ell}$ as described in \cite[Section 3]{Nguyen2020} (cf. \cite[Section 3]{HuSi09}).
\item Re-embed in $S_{K}^{n+\ell}$ by composing with $\exp_{r^{-1}X(p)}$ and scaling by $r$.
\end{enumerate}
\begin{definition}
A \emph{surgically modified (mean curvature) flow} in $S_K^{n+\ell}$ with \emph{neck parameters} $(\varepsilon,k,L)$, \emph{surgery parameters} $(\tau,B)$, and \emph{surgery scale} $r$ is a finite sequence $\{X_i:M_i^n\times[T_i,T_{i+1}]\to S_K^{n+\ell}\}_{i=1}^{N-1}$ of smooth mean curvature flows $X_i:M_i^n\times[T_i,T_{i+1}]\to S_K^{n+\ell}$ for which the $(i+1)$-st initial datum $X_{i+1}(\vts\cdot\vts,T_{i+1}):M_{i+1}\to S_K^{n+\ell}$ is obtained from the $i$-th final datum $X_{i}(\vts\cdot\vts,T_{i+1}):M_{i}\to S_K^{n+\ell}$ by performing finitely many $(\tau,B)$-standard surgeries, in the sense of \cite[Section 3]{HuSi09}, on the middle thirds of $(\varepsilon,k,L)$-necks with mean curvature satisfying $\frac{n-1}{10r} \le H\le \frac{10(n-1)}{r}$, and then discarding finitely many connected components that are diffeomorphic either to $S^n$ or to $S^1\times S^{n-1}$.
\end{definition}

\subsection{Preserving quadratic pinching}

For a suitable range of neck and surgery parameters, and surgery scales, the surgery procedure of \cite{Nguyen2020} (cf. \cite[Section 3]{HuSi09}) preserves the quadratic pinching condition \eqref{eq:strict quadratic pinching}. 

{\bf In the statement of the following proposition (Proposition \ref{prop:preserving pinching with surgery}) and henceforth, when we refer to a surgically modified flow, it is taken for granted that the neck and surgery parameters, and the surgery scale, are fixed within a suitable range (which we progressively refine).}

\begin{proposition}[Quadratic pinching for surgically modified flows]\label{prop:preserving pinching with surgery}
Every surgically modified flow $\{X_i:M_i^n\times[T_i,T_{i+1}]\to S_K^{n+\ell}\}_{i=1}^{N-1}$, $n\ge 5$, with initial condition in the class $\mathcal{C}_K^{n,\ell}(\alpha,V,\Theta)$ (with $\alpha>\alpha_n$ when $n=5,6,7$) satisfies \eqref{eq:uniform quadratic pinching} for all $t\in[T_1,T_N]$.
\end{proposition}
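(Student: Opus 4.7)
The proof proceeds by induction on the surgery index $i$. On each smooth piece $[T_i,T_{i+1}]$ of the flow, the condition \eqref{eq:uniform quadratic pinching} is preserved by Proposition \ref{prop:preserving pinching} (with $m=2$; note that the hypothesis $m-\alpha\le n/4$ is exactly ensured by $\alpha\ge\alpha_n$). Thus the entire content of the proposition is to show that the condition \eqref{eq:uniform quadratic pinching} survives each individual surgery step at times $T_{i+1}$, $1\le i\le N-1$. Since surgery is purely local (performed on the middle third of each neck, with finitely many components possibly discarded), it suffices to verify the condition on the surgery cap obtained from replacing the middle third of a single $(\varepsilon,k,L)$-neck of size $r$. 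The discarded components play no role because deletion cannot violate a pointwise inequality.

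On the neck itself, the plan is to exploit the fact that the pinching inequality holds with a definite quantitative margin. First, we pull up via $\exp_{r^{-1}X(p)}^{-1}$ to obtain an almost-hypersurface neck in the tangent space $T_{r^{-1}X(p)}S^{n+\ell}_{r^2K}$: the codimension estimate (Proposition \ref{prop:codimension estimate}) together with the hypersurface detection lemma (Lemma \ref{lem:hypersurface detection}) ensures that $\vert\Ahat\vert$ and its first few derivatives are $\varepsilon$-small at the relevant scale, while the cylindrical estimate (Proposition \ref{prop:cylindrical estimate}) guarantees that the principal normal component $\sffN$ is $\varepsilon$-close to that of a round cylinder $S^{n-1}\times \R$ of radius $r$. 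A cylinder satisfies the \emph{Euclidean} version of \eqref{eq:uniform quadratic pinching} with $K=0$ with equality at $\alpha=1$ (since $\vert\sff\vert^2=\vert\sffN\vert^2=H^2/(n-1)$), so the $\alpha_n\le\alpha<1$ surgery condition is satisfied with a strictly positive gap of order $\alpha_n-\alpha+O(\varepsilon)$ on the undisturbed part of the neck. Furthermore, the surgery scale is taken so small that $r^2K\ll 1$, which makes the $K$ term in \eqref{eq:uniform quadratic pinching} even more favourable (the factor $2(2-\alpha)K$ is a welcome surplus once the cylindrical part is controlled).

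It remains to verify that the pair of convex caps used to replace the middle third itself satisfies \eqref{eq:uniform quadratic pinching} pointwise. Here we invoke the surgery procedure of Nguyen \cite{Nguyen2020} (which adapts the hypersurface construction of Huisken--Sinestrari \cite{HuSi09} to high codimension via the hypersurface detection step). That procedure is explicitly designed so that the cap, constructed in the ambient Euclidean tangent space as a smooth interpolation between the cylindrical neck cross-section and a convex hemisphere-like region, satisfies the scale-invariant Euclidean pinching inequality $|\sff|^2\le\frac{1}{n-2+\alpha'}H^2$ for some $\alpha'$ strictly between $\alpha_n$ and $\alpha$ (in fact $\alpha'$ can be taken arbitrarily close to $1$, well into the interior of the permitted range). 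Transferring the resulting cap back to $S^{n+\ell}_K$ via $\exp_{r^{-1}X(p)}$ and rescaling by $r$ perturbs both $\vert\sff\vert^2$ and $\vert\mn\vert^2$ by amounts that are $O(r^2K)$ times scale-invariant quantities bounded by the cap geometry; since the Euclidean inequality for the cap holds with margin bounded below by a positive function of $\alpha-\alpha'$, this perturbation (together with the favourable $2(2-\alpha)K$ term) is easily absorbed provided the surgery scale $r$ is chosen so that $r^2K$ is sufficiently small relative to $\alpha-\alpha'$.

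The main technical obstacle, therefore, is not the preservation on the neck (where the cylindrical and codimension estimates provide a uniform margin) nor on the cap interior (where Nguyen's explicit construction provides the inequality in the Euclidean model) but rather the quantitative \emph{matching} in the transition annulus where the cap meets the unmodified neck. In this annulus one must control simultaneously the smooth interpolation error between cylinder and cap and the $O(r^2K)$ corrections introduced by working in $S^{n+\ell}_K$ rather than in $\R^{n+\ell+1}$. This is handled by fixing the neck quality $\varepsilon$ small and the surgery scale $r$ small in terms of the initial class parameters $\alpha$, $V$, $\Theta$ and the chosen surgery parameters $(\tau,B)$, thereby ensuring the inequality \eqref{eq:uniform quadratic pinching} is preserved strictly across the surgery, closing the induction step.
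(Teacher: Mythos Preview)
Your approach is correct and matches the paper's: induction on the surgery index, using Proposition~\ref{prop:preserving pinching} on each smooth interval $(T_i,T_{i+1})$, and checking that the pinching survives each surgery because the model cylinder ($|\sff|^2=\tfrac{1}{n-1}|\mn|^2$) and the explicit caps from Nguyen's construction both satisfy \eqref{eq:uniform quadratic pinching} with a definite margin that absorbs the $\varepsilon$-closeness and $O(r^2K)$ errors.

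One remark on streamlining. In the surgery step you invoke the cylindrical estimate, the codimension estimate, and the hypersurface detection lemma to argue that the neck is close to a cylinder with small $|\Ahat|$. This is unnecessary here, and for surgery times beyond the first it would be circular (the surgically-modified versions of those estimates are proved \emph{after} this proposition and rely on it). The point is simpler: by the \emph{definition} of a surgically modified flow, each surgery is performed on an $(\varepsilon,k,L)$-neck, so $\varepsilon$-cylindricality is a hypothesis, not something to be derived. The paper accordingly bypasses all of this and appeals directly to \cite[Corollary~3.20]{Nguyen2020} (which records that the surgery caps satisfy the required pinching by construction) together with the Gauss equation to handle the passage between $S_K^{n+\ell}$ and the tangent space. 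Your transition-annulus discussion and the choice of an intermediate $\alpha'\in(\alpha,1)$ (the paper uses $\overline\alpha=\tfrac{\alpha+10}{11}$) are exactly the right ideas; just strip out the appeals to the a~priori estimates and the argument is clean.
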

\begin{proof}
Since $|\sff|^2\equiv \frac{1}{n-1}|\mn|^2$ on a hypersurface cylinder and $|\sff|^2 \equiv \frac{1}{n}\vert\mn\vert^2$ on a hypersurface cap, we can ensure, for a suitable choice of neck and surgery parameters and surgery scales, that
\[
\vert\sff\vert^2-\frac{1}{n-2+\overline\alpha}\vert\mn\vert^2<2(2-\overline\alpha)K
\]
on regions modified or added by surgery, where $\overline\alpha\doteqdot\frac{\alpha+10}{11}\in(\alpha,1)$, say. Indeed, this follows from \cite[Corollary 3.20]{Nguyen2020} and the Gauss equation since the surgery scale may be taken arbitrarily small. We can now proceed as in the proof of Proposition \ref{prop:codimension estimate} in the time intervals $(T_i,T_{i+1})$.
\end{proof}

\subsection{The codimension estimate}

Similar reasoning shows that  the codimension estimate holds for surgically modified flows for a suitable range of neck and surgery parameters, and surgery scales.

\begin{proposition}[Codimension estimate for surgically modified flows]\label{prop:codimension estimate with surgery}
Let $\{X_i:M_i^n\times[T_i,T_{i+1}]\to S_K^{n+\ell}\}_{i=1}^{N-1}$, $n\ge 5$, be a surgically modified flow with initial condition in the class $\mathcal{C}_K^{n,\ell}(\alpha,V,\Theta)$ (with $\alpha>\alpha_n$ when $n=5,6,7$). There exist $\delta=\delta(n,\alpha, V, \Theta)>0$ and $C=C(n,\alpha,\Theta)<\infty$ such that
\[
\vert\Ahat\vert^2\le CK^\delta(\vert \mn\vert^2+K)^{1-\delta}\vts \;\;\text{wherever}\;\; \mn\neq 0\vts.
\] 
\end{proposition}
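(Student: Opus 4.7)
The plan is to run the argument of Proposition \ref{prop:codimension estimate} piecewise on each smooth interval $[T_i,T_{i+1})$, and then to verify that the controlling quantity cannot be pushed above the threshold at any of the finitely many surgery times. With the same choice of constants $a$, $b$, $\Lambda$ and exponent $\sigma$ as in Proposition \ref{prop:codimension estimate}, I work with
\[
f_\sigma\doteqdot\frac{\tfrac{1}{2}|\Ahat|^2}{W}\,W^\sigma,\qquad W\doteqdot\tfrac{1}{2}\lb bK+aH^2-|\sff|^2\rb.
\]
Proposition \ref{prop:preserving pinching with surgery} already guarantees that $W\gtrsim H^2+K$ throughout the surgically modified flow, so on each smooth piece $[T_i,T_{i+1})$ the computations of Claims \ref{claim:Naff reaction} and \ref{claim:codimension estimate gradient terms} yield the same differential inequality
\[
\frac{(\pd_t-\Delta)f_\sigma}{f_\sigma}\le 2(1-\sigma)\inner{\frac{\cd f_\sigma}{f_\sigma}}{\frac{\cd W}{W}}
\]
wherever $H^2\ge\Lambda K$, and the maximum principle gives
\[
\sup_{M_i\times[T_i,T_{i+1})}f_\sigma\le\max\Big\{\sup_{M_i\times\{T_i\}}f_\sigma,\; C_0K^\sigma\Big\},
\]
where $C_0=C_0(n,\alpha,\Theta)$.

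It remains to check that the surgical modification at each $T_{i+1}$ does not raise $\sup f_\sigma$ beyond $C_0K^\sigma$. On the portion of $M_{i+1}$ inherited unchanged from $M_i$, $f_\sigma$ is preserved and so is automatically under control. On the newly inserted replacement caps, the construction of \cite[Section 3]{Nguyen2020} produces a submanifold which is a graph over a standard hypersurface cap in an $(n+1)$-dimensional affine subspace of $T_pS_K^{n+\ell}$, re-embedded in $S_K^{n+\ell}$ by the exponential map. On such a cap, $H\sim r^{-1}$ at the surgery scale $r$ (so that $W\gtrsim r^{-2}$ by quadratic pinching), while
\[
|\Ahat|^2\le \varepsilon_{\mathrm{cap}}^2\,r^{-2}+C_1K\vts,
\]
where $\varepsilon_{\mathrm{cap}}$ is controlled by the neck quality parameter $\varepsilon$ (and can be made arbitrarily small) and the $C_1K$ term accounts for the Gauss-equation distortion introduced by the exponential map (cf.\ the proof of Proposition \ref{prop:preserving pinching with surgery}, which is a direct analogue using \cite[Corollary 3.20]{Nguyen2020}).

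Combining these estimates on the cap regions yields
\[
f_\sigma\le\tfrac{1}{2}\big(\varepsilon_{\mathrm{cap}}^2\,r^{-2}+C_1K\big)\,W^{\sigma-1}\le C_2\Big(\varepsilon_{\mathrm{cap}}^2(Kr^2)^{-\sigma}+(Kr^2)^{1-\sigma}\Big)K^\sigma.
\]
Since the Huisken--Sinestrari scheme fixes the surgery scale $r$ in a controlled range with $Kr^2\le 1$ and bounded away from $0$, the second term is automatically bounded by a constant multiple of $K^\sigma$, while the first is absorbed into $C_0K^\sigma$ by requiring the neck quality parameter $\varepsilon$ to be sufficiently small relative to $n$, $\alpha$, $V$, $\Theta$. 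Iterating over the finitely many surgery times produces $f_\sigma\le CK^\sigma$ on all of the surgically modified flow, and Young's inequality then converts this into the claimed bound $|\Ahat|^2\le CK^\delta(H^2+K)^{1-\delta}$.

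The main obstacle is the quantitative verification that the surgery caps satisfy the estimate $|\Ahat|^2\le\varepsilon_{\mathrm{cap}}^2r^{-2}+C_1K$. Intrinsically the inserted model has $\Ahat\equiv 0$ inside the $(n+1)$-plane, so this reduces to tracking how $\exp$ distorts the conormal projection of the second fundamental form and to estimating the Gauss-equation correction coming from the ambient curvature $K$; these are precisely the bookkeeping items already handled in \cite[Section 3]{Nguyen2020} and in the proof of Proposition \ref{prop:preserving pinching with surgery}, so the argument is an adaptation rather than a fundamentally new estimate.
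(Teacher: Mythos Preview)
Your overall strategy—run the smooth-flow maximum principle for $f_\sigma$ on each interval $[T_i,T_{i+1})$ and then check that surgery cannot push $f_\sigma$ above the threshold—matches the paper's. The difference lies in how the surgery step is handled, and your version has a genuine scaling gap.

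The paper distinguishes two kinds of surgery region. On regions \emph{added} by surgery (the new caps), the inserted piece lies in a totally geodesic $S_K^{n+1}\subset S_K^{n+\ell}$ (the image under $\exp$ of an $(n+1)$-plane through the origin), so $\Ahat\equiv 0$ exactly and $f_\sigma\equiv 0$; no $\varepsilon_{\mathrm{cap}}$-error or Gauss-equation correction $C_1K$ is needed. On regions \emph{modified} by surgery (the transition zone where the existing neck is bent toward the cap), the paper invokes \cite[Corollary~3.20]{Nguyen2020} to conclude that $f_\sigma$ is \emph{pointwise nonincreasing}, provided $\sigma$ is small relative to the surgery parameters. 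Thus whatever bound $f_\sigma$ satisfied just before surgery is inherited just after, and the iteration closes with no reference to the surgery scale $r$.

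Your argument instead tries to bound $f_\sigma$ directly on all surgery-affected regions by $C_0K^\sigma$, using $|\Ahat|^2\le\varepsilon_{\mathrm{cap}}^2 r^{-2}+C_1K$. This leads to the requirement $\varepsilon_{\mathrm{cap}}^2\lesssim(Kr^2)^\sigma$, and you dispose of it by asserting that $Kr^2$ is ``bounded away from $0$''. That claim is not justified: the surgery scale $r$ is taken small (indeed arbitrarily small in this scheme), and in the standard order of dependencies $r\sim h_\sharp(\varepsilon)^{-1}$ is chosen \emph{after} the neck quality $\varepsilon$, via the neck detection lemma. Requiring $\varepsilon$ to then be small relative to $(Kr^2)^\sigma$ is circular. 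The paper's pointwise-monotonicity argument avoids this issue entirely, which is why it is the cleaner route here.
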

\begin{proof}
As in Proposition \ref{prop:codimension estimate}, we seek a bound for
\[
f_\sigma\doteqdot \begin{cases} \displaystyle \frac{\frac{1}{2}\vert\Ahat\vert^2}{W}W^{\sigma} &\text{if} \;\; \mn\neq 0\\ 0 & \text{if}\;\; \mn =0\end{cases}
\]
for some $\sigma\in(0,1)$. 

The key observation is that, for a suitable range of neck and surgery parameters and surgery scales, $f_\sigma$ is pointwise nonincreasing on regions modified during surgery, provided $\sigma$ is small enough relative to the surgery parameters. This follows from \cite[Corollary 3.20]{Nguyen2020}. In addition, since $\vert\Ahat\vert^2\equiv 0$ on a hypersurface cylinder or cap, we have that $ f_\sigma \equiv 0$ on regions added during surgery. The claim follows since, recalling the proof of Proposition \ref{prop:codimension estimate}, for $\sigma$ small depending on $n$ and $m-\alpha$ we have 
\[
f_\sigma\le \max\left\{\max_{M\times\{T_i\}}f_\sigma,CK^\sigma\right\}\vts.
\]
in each of the time intervals $(T_i,T_{i+1})$, where $C = C(n, m-\alpha)$. 
\end{proof}

\subsection{The cylindrical estimate}

Similar reasoning yields a cylindrical estimate for surgically modified flows.

\begin{proposition}[Cylindrical estimate for surgically modified flows (Cf. {\cite[Theorem 5.3]{HuSi09}})]\label{prop:cylindrical surgery}
Let $\{X_i:M_i^n\times[T_i,T_{i+1}]\to S_K^{n+\ell}\}_{i=1}^{N-1}$, $n\ge 5$, be a surgically modified flow with initial condition in the class $\mathcal{C}_K^{n,\ell}(\alpha,V,\Theta)$ (with $\alpha>\alpha_n$ when $n=5,6,7$). For every $\eta\in(0,\eta_0)$ there exists $C_\eta=C_\eta(n,\ell,\alpha,V,\Theta,\eta)<\infty$ such that
\begin{align}\label{eq:cylindrical estimate surgery}
|\sff|^2-\tfrac{1}{n-1}\vert\mn\vert^2 \leq \eta \vert\mn\vert^2 + C_\eta K \quad\text{in}\quad M_i\times[T_i,T_{i+1}]
\end{align}
for all $i$.
\end{proposition}
\begin{proof}
We can arrange, for suitable neck and surgery parameters, that $(g_{\sigma,\eta})_+$ is pointwise non-increasing in regions modified by surgery, and zero in regions added by surgery. This follows from \cite[Corollary 3.20]{Nguyen2020} and the Gauss equation. Proceeding as in the proof of Proposition \ref{prop:cylindrical estimate}, we obtain analogues of the inequalities \eqref{eq:Stampacchia} and \eqref{eq:v_k monotonicity} on each time interval $(T_i,T_{i+1})$, with $v_k$ replaced by $(g_{\sigma,\eta}-k)_+^{\frac{p}{2}}$. Since $(g_{\sigma,\eta})_+$ is pointwise non-increasing across surgery times, and the total volume of the solution decreases under surgery, the analogue of \eqref{eq:Stampacchia} can be integrated from $T_1=0$ to $T_N=T$ to obtain an analogue of \eqref{eq:still holds with surgeries}. The remainder of the proof of the cylindrical estimate then applies unmodified.
\end{proof}


\subsection{The gradient estimate}

Since the derivatives of the second fundamental form are zero on round Euclidean cylinders and spherical caps, the derivative estimates also pass to surgically modified flows.
 
\begin{proposition}[Gradient estimate for surgically modified flows (cf. {\cite[Theorem 6.1]{HuSi09}})]\label{prop:gradient surgery}
Let $\{X_i:M_i^n\times[T_i,T_{i+1}]\to S_K^{n+\ell}\}_{i=1}^{N-1}$, $n\ge 5$, be a surgically modified flow with initial condition in the class $\mathcal{C}_K^{n,\ell}(\alpha,V,\Theta)$ (with $\alpha>\alpha_n$ when $n=5,6,7$). There exists $C=C(n,\ell,\alpha,V,\Theta)<\infty$ such that the estimate
\begin{align}\label{eq:gradient estimate surgery}
\vert\cd \sff\vert^2\le C(\vert\mn\vert^4+K^2)
\end{align}
holds for all $t \geq \lambda_0 K^{-1}$.  
\end{proposition}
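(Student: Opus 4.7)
My plan is to run the maximum principle argument from the proof of Proposition \ref{prop:gradient estimate} on each smooth sub-interval $[T_i,T_{i+1}]$, using Proposition \ref{prop:cylindrical surgery} in place of its smooth counterpart, and then verify that the relevant auxiliary quantity is not inflated by the surgery procedure. Fix $\eta=\eta_0(n)>0$ small enough that the first-order term can be absorbed via the Kato inequality \eqref{eq:Kato inequality}, and define
\[
G_\eta \doteqdot 2C_\eta K+(\eta+\tfrac{1}{n-1})\vert\mn\vert^2-\vert\sff\vert^2,\qquad G_0 \doteqdot 4(2-\alpha_n)K+\tfrac{1}{n-2+\alpha_n}\vert\mn\vert^2-\vert\sff\vert^2,
\]
where $C_\eta$ is the constant from Proposition \ref{prop:cylindrical surgery}. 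By Propositions \ref{prop:preserving pinching with surgery} and \ref{prop:cylindrical surgery}, $G_\eta \ge C_\eta K$ and $G_0 \ge 2(2-\alpha_n)K$ hold throughout the surgically modified flow. The same computations as in the proof of Proposition \ref{prop:gradient estimate} yield, on each smooth piece,
\[
(\pd_t-\Delta)G_\eta \ge -2n(\vert\sff\vert^2+K)G_\eta+\beta\vert\cd\sff\vert^2,\qquad (\pd_t-\Delta)G_0 \ge -2n(\vert\sff\vert^2+K)G_0,
\]
for some $\beta=\beta(n)>0$.

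Applying the maximum principle to $f\doteqdot \vert\cd\sff\vert^2/(G_\eta G_0)$ on $(T_i,T_{i+1}]$ exactly as in the smooth proof gives
\[
\sup_{M_i\times(T_i,T_{i+1}]}f \le \max\Big\{\sup_{M_i\times\{T_i\}}f,\;C\Big\},
\]
with $C=C(n,\ell,\alpha,V,\Theta)$. To iterate this across surgery times, I need to show that $f$ is not inflated when the $(i+1)$-st initial datum is formed from the $i$-th final datum. On discarded components there is nothing to check; on the hypersurface caps glued in by surgery, $\sff$ coincides with that of a piece of a round Euclidean sphere, so $\cd\sff\equiv 0$ and $f=0$; on the modified neck region, the scale-invariant bound $\vert\cd^m\sff\vert\le c\,r^{-(m+1)}$ from \cite[Corollary 3.20]{Nguyen2020}, together with $\vert\mn\vert\sim r^{-1}$ and hence $G_\eta,G_0\gtrsim r^{-2}$, gives $f\le C$ on the modified region with $C$ depending only on $n$ and the admissible range of surgery parameters. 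Enlarging the constant $C$ above if necessary to absorb this contribution, the bound $f\le C$ then propagates to all subsequent smooth pieces; the initial value at $t=\lambda_0 K^{-1}$ is controlled by the Bernstein estimates (Proposition \ref{prop:class C universal interior estimates}).

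Unwinding the definition of $f$ using the uniform pinching condition to estimate $G_\eta G_0 \le C(\vert\mn\vert^2+K)^2$ yields \eqref{eq:gradient estimate surgery}.

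The main obstacle is the surgery step: one must verify that the scale-invariant control on $f$ is preserved by replacing a neck by a pair of caps. This is not purely maximum principle input — it relies on the explicit construction of the standard surgery and the derivative estimates on the modified region established in \cite[Section 3]{Nguyen2020} (cf. \cite[Theorem 6.2]{HuSi09}). Once these estimates are in hand, the maximum principle argument proceeds without modification on each smooth interval and closes up across the finitely many surgery times.
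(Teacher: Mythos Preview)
Your proposal is correct and follows essentially the same approach as the paper: both run the smooth maximum-principle argument for $f=\vert\cd\sff\vert^2/(G_\eta G_0)$ on each smooth sub-interval and then check that $f$ stays bounded across surgeries by exploiting the near-cylindrical/near-spherical geometry of the modified regions. One minor imprecision: the glued-in caps are not literally pieces of round spheres (they are obtained via an explicit interpolation), so $\cd\sff$ is not identically zero there but only small in the sense $\vert\cd\sff\vert^2\le \mu_0\vert\mn\vert^4$ for suitable surgery parameters --- this is exactly what the paper invokes, and it is what your subsequent neck-region argument already provides.
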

\begin{proof}
We proceed as in the proof of Proposition \ref{prop:gradient estimate}, but with the exponential decay term discarded and fixed $\eta=\beta$. First observe that, since $|\sff|^2\equiv\frac{1}{n-1}\vert\mn\vert^2$ on a hypersurface cylinder and $|\sff|^2\equiv\frac{1}{n}\vert\mn\vert^2$ on a hypersurface cap, we can ensure, for a suitable range of neck and surgery parameters, and surgery scales, that
\begin{align*}
|\sff|^2-\tfrac{1}{n-1}\vert\mn\vert^2 \leq \tfrac{\beta}{2}\vert\mn\vert^2
\end{align*}
on regions modified or added by surgery. We may therefore arrange that
\begin{align*}
G_\beta\doteqdot {}& \left(\tfrac{1}{n-1}+\beta\right)\vert\mn\vert^2-|\sff|^2 + 2C_\beta K\geq \tfrac{\beta}{2}\vert\mn\vert^2 
\end{align*}
and 
\begin{align*}
G_0\doteqdot {}&\tfrac{3}{n+2}\vert\mn\vert^2 -|\sff|^2 + 2C_0 K\geq \tfrac{3}{2}\beta\vert\mn\vert^2.
\end{align*}
Furthermore, since $|\nabla \sff|^2 \equiv 0$ on a hypersurface cylinder or cap, we can ensure, for a suitable range of neck and surgery parameters, and surgery scales, that, on regions modified or added by surgery, $|\nabla \sff|^2 \leq \mu_0\vert\mn\vert^4$, where $\mu_0$ is a constant which depends only on $n$. Thus, in regions modified or added by surgery,
\begin{align*}
\frac{|\nabla \sff|^2}{G_0G_\beta} \leq \frac{4\mu_0}{3\beta^2}\,.
\end{align*} 
Choosing $\beta$ sufficiently small, we may now proceed as in the proof of Proposition \ref{prop:gradient estimate} in the time intervals $(T_i,T_{i+1})$. 
\end{proof}

\subsection{Higher order estimates}

Proceeding similarly as  in Proposition \ref{prop:gradient surgery} (cf. \cite[Theorem 6.3]{HuSi09}) yields estimates for higher derivatives of $\sff$ along surgically modified flows.

\begin{proposition}[Hessian estimate for surgically modified flows (cf. {\cite[Theorem 6.3]{HuSi09}})]\label{prop:Hessian estimate surgery}
Let $\{X_i:M_i^n\times[T_i,T_{i+1}]\to S_K^{n+\ell}\}_{i=1}^{N-1}$, $n\ge 5$, be a surgically modified flow with initial condition in the class $\mathcal{C}_K^{n,\ell}(\alpha,V,\Theta)$ (with $\alpha>\alpha_n$ when $n=5,6,7$). There exists $C=C(n,\ell,\alpha,V,\Theta)$ such that the estimate
\ba\label{eq:scale_invariant_Hessian_estimate_surgery}
\vert\cd^2{\sff}\vert^2\leq C(\vert\mn\vert^{6}+K^{3})\,.
\ea
holds for all $t \geq \lambda_0 K^{-1}$.
\end{proposition}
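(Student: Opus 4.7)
The plan is to mirror the passage from Proposition \ref{prop:gradient estimate} to Proposition \ref{prop:gradient surgery}, applied to the Hessian bound of Proposition \ref{prop:Hessian estimate}. Since we no longer have the luxury of exponential decay through surgery times, we first replace the test quantity by its ``decay-free'' cousin
\[
f\doteqdot \frac{\vert\cd^2\sff\vert^2}{W^{5/2}}+c_1\frac{\vert\cd\sff\vert^2}{W^{3/2}},
\]
where $W\doteqdot \vert\mn\vert^2+K$ and $c_1=c_1(n,\ell,\alpha,V,\Theta)$ is large enough to absorb the dangerous ``bad'' contribution coming from $(\partial_t-\Delta)\vert\cd^2\sff\vert^2/W^{5/2}$. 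Here the surgery gradient estimate (Proposition \ref{prop:gradient surgery}) replaces Proposition \ref{prop:gradient estimate} and, being $K$-decay-free, permits exactly this simplification: one obtains, along the smooth pieces, the pointwise differential inequality
\[
(\partial_t-\Delta)f\le c_4\vert\mn\vert^2 W^{1/2}
\]
with $c_4=c_4(n,\ell,\alpha,V,\Theta)$, exactly as in Proposition \ref{prop:Hessian estimate} but with the exponentially small correction deleted.

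The pinching barrier $G\doteqdot\sqrt{2(2-\alpha_n)K+\tfrac{1}{n-2+\alpha_n}\vert\mn\vert^2-\vert\sff\vert^2}$ still satisfies $(\partial_t-\Delta)G\ge c_5\vert\mn\vert^2 W^{1/2}$ with $c_5=c_5(n,\alpha)>0$ by Proposition \ref{prop:preserving pinching with surgery}, so $f-(c_4/c_5)G$ is a subsolution of the heat equation on every smooth stratum $[T_i,T_{i+1}]$ with $T_i\ge \lambda_0 K^{-1}$.

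The crucial new ingredient is controlling $f$ across surgery times. On an exact hypersurface neck or spherical cap one has $\cd\sff\equiv 0$ and $\cd^2\sff\equiv 0$, so for a sufficiently small neck parameter $\varepsilon$ and sufficiently high order of approximation $k$, \cite[Corollary 3.20]{Nguyen2020} together with the Gauss equation guarantees that on every region modified or added by surgery both $\vert\cd\sff\vert^2/W^{3/2}$ and $\vert\cd^2\sff\vert^2/W^{5/2}$ are bounded by $C(n,\ell,\alpha,V,\Theta)K^{1/2}$; while, by the argument of Proposition \ref{prop:preserving pinching with surgery}, $G$ remains controlled on such regions. Hence after each surgery the quantity $f-(c_4/c_5)G$ does not exceed its pre-surgery maximum on the unmodified part of the solution (plus a harmless constant times $K^{1/2}$).

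Applying the parabolic maximum principle on each smooth interval $[T_i,T_{i+1}]$ and patching these estimates together across surgery times by the preceding bound yields
\[
\sup_{M\times\{t\}}\Bigl(f-\tfrac{c_4}{c_5}G\Bigr)\le \sup_{M\times\{\lambda_0K^{-1}\}}\Bigl(f-\tfrac{c_4}{c_5}G\Bigr)+C(n,\ell,\alpha,V,\Theta)K^{1/2}
\]
for all $t\ge \lambda_0K^{-1}$, where the initial term is controlled by Proposition \ref{prop:class C universal interior estimates}. Since $G\le C(n,\alpha)W^{1/2}$, this gives $\vert\cd^2\sff\vert^2\le cW^3+CK^{1/2}W^{5/2}$, and Young's inequality then delivers the desired bound $\vert\cd^2\sff\vert^2\le C(\vert\mn\vert^6+K^3)$. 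The main obstacle is the across-surgery control of $f$: it rests entirely on the fact that the standard surgery model is built to be arbitrarily close, in any prescribed $C^k$-norm and at arbitrarily small scale, to a round cylinder/cap, so that higher derivatives of $\sff$ are automatically as small as we please in the modified region; this is the role of \cite[Corollary 3.20]{Nguyen2020} and the neck parameters $k\ge k_0(n,\ell,\alpha,V,\Theta)$.
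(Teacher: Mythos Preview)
Your approach is the same as the paper's (which is simply ``proceed as in Proposition~\ref{prop:Hessian estimate} between surgeries and note that $\vert\cd^2\sff\vert^2/\vert\mn\vert^6$ is small in surgery regions''), and the across-surgery control you describe is exactly right. There is, however, a small but genuine slip in your differential inequality. Since Proposition~\ref{prop:gradient surgery} gives only $\vert\cd\sff\vert^2\le C(\vert\mn\vert^4+K^2)$ without the $\mathrm{e}^{-2Kt}$ decay present in Proposition~\ref{prop:gradient estimate}, the $\vert\cd\sff\vert^4/W^{5/2}$ contribution yields
\[
(\partial_t-\Delta)f\;\le\; c_4\vert\mn\vert^2W^{1/2}+C_4K^{3/2}
\]
rather than the clean $(\partial_t-\Delta)f\le c_4\vert\mn\vert^2W^{1/2}$ you assert; ``deleting the exponentially small correction'' leaves a constant term, not zero. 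Consequently $f-(c_4/c_5)G$ is not a subsolution of the heat equation but only satisfies $(\partial_t-\Delta)\bigl(f-(c_4/c_5)G\bigr)\le C_4K^{3/2}$, and integrating over a smooth interval $[T_i,T_{i+1}]$ produces a drift $C_4K^{3/2}(T_{i+1}-T_i)$. Since the maximal time of a surgically modified flow in the sphere is \emph{not} a priori bounded in terms of $(n,\ell,\alpha,V,\Theta)$ alone (see the remark in \S5), this drift cannot simply be absorbed into the constant $C$.

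The paper's own proof is terse enough that it does not visibly confront this point either; the honest fix is to observe that the cylindrical estimate with exponential decay (and hence the refined gradient estimate of Proposition~\ref{prop:gradient estimate}) also survives surgery---the function $(\mathrm{e}^{2Kt}g_{\eta,\sigma}-k)_+$ is pointwise nonincreasing across surgery for the same reasons $(g_{\eta,\sigma})_+$ is---so that one may retain the full three-term $f$ of Proposition~\ref{prop:Hessian estimate} together with its decaying correction $\frac{C_4}{4}K^{1/2}\mathrm{e}^{-4Kt}$, and then your patching argument goes through verbatim.
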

  
\begin{proof}
Proceed as in Proposition \ref{prop:Hessian estimate} between surgeries and use the fact that, for suitable neck and surgery parameters, and surgery scales, $\vert \cd^2\sff\vert^2/\vert\mn\vert^6$ is small in regions modified or added by surgery.
\end{proof}

\subsection{Neck detection}

The conclusion of the neck detection Lemma \ref{lem:curvature neck detection} also holds for surgically modified flows, so long as we work in regions which are not affected by surgeries (cf. \cite[Lemma 7.4]{HuSi09}). 

In the following theorem, a region $U\times I$ is \emph{free of surgeries} if at each surgery time $T_i\in I$, $i\in \{2,\dots,N-1\}$, we have $U\subset M_{i-1}\cap M_i$ and $X_{i-1}|_U(\vts\cdot\vts,T_{i})=X_{i}|_U(\vts\cdot\vts,T_{i})$ (and hence $X_{i-1}$ and $X_i$ may be pasted together to form a smooth mean curvature flow in $U\times I$).

\begin{lemma}[{Neck detection for surgically modified flows (cf. \cite[Lemma 7.4]{HuSi09})}] \label{lem:NDL} 
Let $\{X_i:M_i^n\times[T_i,T_{i+1}]\to S_K^{n+\ell}\}_{i=1}^{N-1}$, $n\geq 5$, be a surgically modified flow with initial condition in the class $\mathcal C^{n,\ell}_K(\alpha,V,\Theta)$. Given $\varepsilon>0$, $k\ge 2$, $L>0$, and $\theta>0$, there exist $\eta_\sharp=\eta_\sharp(n,\ell,\alpha,V,\Theta,\varepsilon,k,L,\theta)>0$ and $h_\sharp=h_\sharp(n,\ell,\alpha,V,\Theta,\varepsilon,k,L,\theta)<\infty$ with the following property: If
\begin{itemize}
\item[(ND1)] $|\mn(x_0,t_0)| \geq h_\sharp \sqrt K$ and  $\frac{ (|\sff|^2-\frac{1}{n-1}|\mn|^2)(x_0,t_0)}{|\mn(x_0,t_0)|^2}\geq -\eta_\sharp$, and
\item[(ND2)] the neighbourhood $\mathcal{B}_{(L+1)r_0}(x_0,t_0)\times \left(t_0-\theta r_0^2,t_0\right]$ 
is free of surgeries, where $r_0^2\doteqdot \frac{n-1}{|\mn|^2(x_0,t_0)}$,
\end{itemize} 
then $(x_0,t_0)$ lies at the centre of an $(\varepsilon,k,L)$-neck of size $r_0$.
\end{lemma}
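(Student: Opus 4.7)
The plan is to argue by contradiction, mirroring the smooth argument sketched for Lemma \ref{lem:curvature neck detection}, with the key new ingredient being that condition (ND2) ensures the rescaled flows are smooth mean curvature flows on the relevant parabolic neighbourhoods, so that the a priori estimates of Section \ref{sec:key estimates surgery} can be applied directly.

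Suppose the conclusion fails. Then there exist $\varepsilon_0>0$, $k_0\ge 2$, $L_0>0$ and $\theta_0>0$ such that, for each $j\in \mathbb{N}$, there is a surgically modified flow $\{X_i^j\}$ with initial condition in $\mathcal C^{n,\ell}_{K_j}(\alpha,V,\Theta)$ and a space-time point $(x_j,t_j)$ for which (ND1) holds with $\eta_\sharp=j^{-1}$, $h_\sharp=j$, (ND2) holds with the chosen parameters, and yet $(x_j,t_j)$ does not lie at the centre of an $(\varepsilon_0,k_0,L_0)$-neck of size $r_j\doteqdot\sqrt{(n-1)/|\mn|^2(x_j,t_j)}$. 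Translate $(X_j(x_j,t_j),t_j)$ to the space-time origin, rotate so the ambient tangent plane at the origin is $\R^{n+\ell}\times\{0\}$, and parabolically rescale by $r_j^{-1}$ to obtain rescaled flows $\hat X_j$ defined on parabolic neighbourhoods of $(x_j,0)$ in ambient spheres of curvature $r_j^2K_j$. By (ND2) each $\hat X_j$ is a smooth mean curvature flow on $\mathcal{B}_{L_0+1}(x_j,0)\times(-\theta_0,0]$, and by construction $\hat H_j(x_j,0)=n-1$ and $r_j^2 K_j\to 0$ as $j\to\infty$ (since $h_\sharp = j\to\infty$).

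Now apply the estimates of Section \ref{sec:key estimates surgery}. Propositions \ref{prop:preserving pinching with surgery}, \ref{prop:cylindrical surgery}, \ref{prop:codimension estimate with surgery}, \ref{prop:gradient surgery} and \ref{prop:Hessian estimate surgery} scale to yield, on the relevant neighbourhoods,
\[
|\hat\sff_j|^2-\tfrac{1}{n-1}\hat H_j^2 \le j^{-1}\hat H_j^2 + C_{j^{-1}}(r_j^2K_j),\quad |\hat\Ahat_j|^2\le C(r_j^2K_j)^\delta(\hat H_j^2+r_j^2K_j)^{1-\delta},
\]
together with uniform bounds on $|\hat\cd\hat\sff_j|$ and $|\hat\cd^2\hat\sff_j|$ in terms of $\hat H_j$. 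Integrating the gradient and Hessian bounds as in the proof of Lemma \ref{lem:curvature neck detection} (via Corollary \ref{cor:spacetime grad H bound} and its argument) produces uniform parabolic curvature bounds on a fixed parabolic neighbourhood of $(x_j,0)$, which can be bootstrapped to bounds on $\mathcal B_{L_0+1}(x_j,0)\times(-\theta_0,0]$ by a finite iteration. Extract a subsequential smooth Cheeger--Gromov limit $\hat X_\infty$ in the $C^{k_0+2}$-topology on $\mathcal B_{L_0}(x_\infty,0)\times(-\theta_0/2,0]$, taking values in a Euclidean space (since $r_j^2K_j\to 0$).

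In the limit, the codimension estimate forces $\hat\Ahat_\infty\equiv 0$, so $\hat X_\infty$ takes values in an affine hyperplane of the ambient Euclidean space; the cylindrical estimate gives $|\hat\sff_\infty|^2\le\tfrac{1}{n-1}\hat H_\infty^2$, and hypothesis (ND1) with $\eta_\sharp=j^{-1}\to 0$ forces equality at the origin, so the smallest principal curvature vanishes there. The maximum principle applied to the smallest principal curvature of a mean-convex hypersurface flow then forces it to vanish identically on a backward parabolic neighbourhood, so $\hat X_\infty$ splits off a line; combined with $|\hat\sff_\infty|^2\equiv\tfrac{1}{n-1}\hat H_\infty^2$, the cross-section is totally umbilic, and $\hat X_\infty$ is a shrinking round cylinder of radius $1$ at time $0$. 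Smooth convergence on a neighbourhood of $(x_\infty,0)$ of the required size then contradicts the assumption that $(x_j,t_j)$ fails to centre an $(\varepsilon_0,k_0,L_0)$-neck for all $j$.

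The main obstacle is the passage to the limit: one needs both the codimension estimate (to collapse the normal bundle so the limit is effectively a hypersurface flow in a Euclidean space) and the boundedness of the parabolic neighbourhood free of surgeries (to upgrade local convergence into convergence on a neighbourhood of size $L_0$ via the iterated doubling argument for $\hat H_j$). Subsidiary technicalities are the treatment of the ambient curvature contributions, which are all $O(r_j^2K_j)\to 0$ under rescaling, and the possibility of injectivity-radius collapse along the sequence, which is ruled out by the uniform curvature and mean-convexity bounds together with hypothesis (ND2).
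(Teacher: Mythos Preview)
Your proposal is correct and follows essentially the same approach as the paper: the paper's proof simply states that the argument of Lemma~\ref{lem:curvature neck detection} carries over verbatim, using the surgically-modified estimates (Propositions~\ref{prop:cylindrical surgery}, \ref{prop:gradient surgery}, \ref{prop:Hessian estimate surgery}) in place of their smooth counterparts, with (ND2) guaranteeing the rescaled flows are smooth on the relevant parabolic neighbourhood. You have spelled out precisely this contradiction/blow-up/splitting argument, including the role of the codimension estimate in collapsing the limit to a hypersurface flow, which is the one extra ingredient needed in high codimension compared to \cite{HuSi09}.
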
 
\begin{proof}
The proof of Lemma \ref{lem:curvature neck detection} applies using Propositions \ref{prop:cylindrical surgery}, \ref{prop:gradient surgery} and  \ref{prop:Hessian estimate surgery} in lieu of Propositions \ref{prop:cylindrical estimate}, \ref{prop:gradient estimate} and \ref{prop:Hessian estimate}, due to the hypothesis (ND2).
\end{proof}

Analogous arguments yield versions of the neck detection lemma which establish that a spacetime region of the solution is an evolving neck, even when said region is bordered by part of the solution modified by surgery (see \cite[Lemma 7.4 (i)]{HuSi09} and \cite[Lemma 5.8 (1)]{Nguyen2020}).

\subsection{Hypersurface detection}

Since the codimension estimate survives the surgery, we obtain an analogue of the hypersurface detection lemma in regions unaffected by surgery.

\begin{lemma}[Hypersurface detection for surgically modified flows]\label{lem:HDL}
Let $\{X_i:M_i^n\times[T_i,T_{i+1}]\to S_K^{n+\ell}\}_{i=1}^{N-1}$, $n\geq 5$, be a surgically modified flow in the class $\mathcal C^{n,\ell}_K(\alpha,V,\Theta)$. Given $\varepsilon>0$, there exist $h_\sharp=h_\sharp(n,\ell,\alpha,V,\Theta,\varepsilon)<\infty$, $L_\sharp=L_\sharp(n,\ell,\alpha,V,\Theta)>0$ and $\theta_\sharp=\theta_\sharp(n,\ell,\alpha,V,\Theta)>0$ with the following property. If
\begin{itemize}
\item[(HD1)] $\displaystyle |\mn(x_0, t_0)|\geq h_\sharp\sqrt{K}$, and
\item[(HD2)] the neighbourhood $\mathcal{B}_{(L_\sharp+1)r_0}(x_0,t_0)\times \left(t_0-(\theta_\sharp+1)r_0^2,t_0\right]$ 
is free of surgeries, where $r_0\doteqdot \frac{n-1}{\vert\mn\vert(x_0,t_0)}$,
\end{itemize}
then 
\[
\sup_{\mathcal{B}_{L_\sharp r_0}(x_0,t_0)\times (t_0-\theta_\sharp r_0^2,t_0]}\vert\cd^k\Ahat\vert\le \varepsilon r_0^{-(k+1)}
\]
for each $k=0,\dots,\lfloor\frac{2}{\varepsilon}\rfloor$.
\end{lemma}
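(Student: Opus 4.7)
The plan is to argue by contradiction via a parabolic blow-up, exactly as in Lemma \ref{lem:hypersurface detection} (whose proof was modelled on \cite[Lemma 7.4]{HuSi09}), but taking advantage of hypothesis (HD2) to ensure that the blow-up limit is a genuine smooth mean curvature flow. Suppose the conclusion fails; then for some $\varepsilon>0$ we can find a sequence of surgically modified flows in the class $\mathcal{C}^{n,\ell}_K(\alpha,V,\Theta)$ (with fixed constants $L_\sharp$, $\theta_\sharp$ to be chosen momentarily) together with spacetime points $(x_j,t_j)$ satisfying (HD1) and (HD2) at which some $\vert\cd^{k_j}\Ahat\vert\,r_j^{k_j+1}\ge \varepsilon$, for some $0\le k_j\le \lfloor 2/\varepsilon\rfloor$. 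After passing to a subsequence we may fix $k_j=k_0$.

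I would then translate $(X_j(x_j,t_j),t_j)$ to the spacetime origin, rotate so that $T_{X_j(x_j,t_j)}S_K^{n+\ell}=\R^{n+\ell}\times\{0\}$, and parabolically rescale by $r_j^{-1}=\vert\mn\vert(x_j,t_j)/(n-1)\ge h_\sharp\sqrt{K}/(n-1)\to\infty$. The ambient sectional curvature of the rescaled flow is $r_j^2K\to 0$ since (HD1) forces $h_\sharp\to\infty$. Because (HD2) guarantees that a fixed parabolic neighbourhood of $(x_j,t_j)$ is free of surgeries, the rescaled flows $\hat X_j$ are smooth mean curvature flows on $\mc B_{L_\sharp+1}(x_j,0)\times(-\theta_\sharp-1,0]$. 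The pinching of Proposition \ref{prop:preserving pinching with surgery} together with the scale-invariant derivative bounds of Propositions \ref{prop:gradient surgery} and \ref{prop:Hessian estimate surgery} (which apply on surgery-free subcylinders) and the higher Bernstein estimates obtained by integrating the rough evolution equation \eqref{eq:evolve derivatives of A} then give uniform $C^\infty_{\mathrm{loc}}$ bounds on $\hat X_j$. Arzel\`a--Ascoli produces a smooth limit flow $\hat X_\infty:\mc B_{L_\sharp}(0)\times(-\theta_\sharp,0]\to\R^{n+\ell}$ which is a genuine mean curvature flow in Euclidean space.

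The decisive input is the codimension estimate (Proposition \ref{prop:codimension estimate with surgery}): under the rescaling it becomes
\[
\vert\hat{\Ahat}_j\vert^2\le C(r_j^2K)^\delta(\vert\hat{\mn}_j\vert^2+r_j^2K)^{1-\delta}\to 0
\]
uniformly on compact subsets where $\vert\hat{\mn}_j\vert$ stays bounded (which it does, thanks to the gradient estimate applied on the surgery-free neighbourhood, giving a Harnack-type comparison for $\vert\mn\vert$ as in \cite[Lemma 7.2]{HuSi09}). Consequently $\hat{\Ahat}_\infty\equiv 0$ on $\mc B_{L_\sharp}(0)\times(-\theta_\sharp,0]$, and smoothness of the convergence forces $\hat{\cd}^m\hat{\Ahat}_\infty\equiv 0$ for every $m$. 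But $\vert\cd^{k_0}\hat{\Ahat}_j\vert(x_j,0)\ge\varepsilon$ by assumption, contradicting the smooth convergence $\hat{\cd}^{k_0}\hat{\Ahat}_j\to 0$.

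The main obstacle is verifying that (HD2) is strong enough to guarantee all the estimates employed actually hold on a neighbourhood large enough (in both space and time) to carry out the blow-up, since the derivative estimates of Propositions \ref{prop:gradient surgery}, \ref{prop:Hessian estimate surgery} are global consequences of parabolic arguments across surgery times but their \emph{use} here demands that the blow-up region itself be surgery-free; this is precisely the role of the parameters $L_\sharp$ and $\theta_\sharp$, which should be chosen so the parabolic neighbourhood $\mc B_{(L_\sharp+1)r_0}\times(t_0-(\theta_\sharp+1)r_0^2,t_0]$ required by (HD2) dominates the rescaled domain needed for smooth convergence. A secondary subtlety is the Harnack comparison that bounds $\vert\hat{\mn}_j\vert$ from above and below on a uniform parabolic cylinder; this is handled by integrating the pointwise bounds $\vert\cd^\perp\mn\vert\le c_\sharp\vert\mn\vert^2$ and $\vert\cd_t^\perp\mn\vert\le \tfrac{c_\sharp^2}{2}\vert\mn\vert^3$ that follow from Corollary \ref{cor:spacetime grad H bound} (applied in the surgery-free region), exactly as in the standard argument.
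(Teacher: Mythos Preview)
Your proposal is correct and follows essentially the same approach as the paper: the paper's proof simply notes that the argument for Lemma~\ref{lem:hypersurface detection} (itself a blow-up/contradiction argument modelled on \cite[Lemma~7.4]{HuSi09}) goes through verbatim once one substitutes the surgically-modified-flow estimates of Propositions~\ref{prop:codimension estimate with surgery}, \ref{prop:gradient surgery} and \ref{prop:Hessian estimate surgery} for their smooth counterparts, with hypothesis (HD2) supplying the surgery-free parabolic neighbourhood needed for the compactness step. Your write-up spells out exactly this scheme.
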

\begin{proof}
The proof of Lemma \ref{lem:hypersurface detection} applies using Propositions \ref{prop:cylindrical surgery}, \ref{prop:gradient surgery} and  \ref{prop:Hessian estimate surgery} in lieu of Propositions \ref{prop:cylindrical estimate}, \ref{prop:gradient estimate} and \ref{prop:Hessian estimate}, due to the hypothesis (HD2).
\end{proof}

\section{Existence of terminating surgically modified flows}

We say that a surgically modified flow $\{X_i:M_i^n\times[T_i,T_{i+1}]\to S_K^{n+\ell}\}_{i=1}^{N-1}$ \emph{terminates} at the final time $T\doteqdot T_N<\infty$ if either
\begin{itemize}
\item each connected component of $X_{N-1}(M_{N-1},T_N)$ is diffeomorphic to $S^n$ or to $S^1\times S^{n-1}$, or
\item after performing surgery on $X_{N-1}(M_{N-1},T_N)$, each connected component of the resulting hypersurface is diffeomorphic to $S^n$ or to $S^1\times S^{n-1}$.
\end{itemize}

\begin{theorem}[Existence of terminating surgically modified flows]
Let $X:M\to S_K^{n+\ell}$, $n\ge 5$, be a properly immersed hypersurface satisfying the quadratic pinching condition \eqref{eq:strict quadratic pinching}. There exists a surgically modified flow $\{X_i:M_i^n\times[T_i,T_{i+1}]\to S_K^{n+\ell}\}_{i=1}^{N-1}$ with $X_1(\vts\cdot\vts,0)=X$ which terminates at time $T=T_N$.
\end{theorem}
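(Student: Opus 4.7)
The plan is to mimic the Huisken--Sinestrari surgery construction \cite{HuSi09}, as extended to high codimension by Nguyen \cite{Nguyen2020}, using the codimension estimate (Proposition~\ref{prop:codimension estimate with surgery}) to reduce every sufficiently high curvature region to the hypersurface situation.

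First, since $X$ satisfies \eqref{eq:strict quadratic pinching}, $X$ lies in some class $\mathcal{C}^{n,\ell}_K(\alpha,V,\Theta)$ with $\alpha>\alpha_n$. Fix neck and surgery parameters $(\varepsilon,k,L,\tau,B)$ within the common range for which all results of Section~\ref{sec:key estimates surgery} apply, and fix curvature thresholds $0<h_\sharp<H_1<H_2<H_3<\infty$. Here $h_\sharp$ is chosen as in Lemmata~\ref{lem:NDL} and~\ref{lem:HDL} (with parameters determined by the desired quality of necks and of hypersurface approximation), $H_1$ is the scale used to perform surgery, $H_2$ serves to trigger surgery, and $H_3\gg H_2$ is an auxiliary scale controlling gradients.

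Starting from $X_1\doteqdot X$, evolve by mean curvature flow on the maximal smooth existence interval. By Proposition~\ref{prop:preserving pinching with surgery}, the solution stays in $\mathcal{C}^{n,\ell}_K(\alpha,V,\Theta)$. If no singularity forms, the solution either extincts in finite time as a round point or converges to a totally geodesic subsphere (as noted after Proposition~\ref{prop:cylindrical estimate}); in the former case the component is $S^n$ and we terminate, in the latter we discard after finitely many steps. Otherwise, let $T_2$ be the first time when $\max|\mn|\sqrt K^{-1}$ reaches $H_3$. At time $T_2$, the set $\{|\mn|\ge H_1\sqrt K\}$ is analysed as in \cite[\S\S 7--8]{HuSi09} and \cite[\S 5]{Nguyen2020}: combining Propositions~\ref{prop:cylindrical estimate} and~\ref{prop:gradient estimate}, each point at scale $\ge H_2\sqrt K$ is either quantitatively cylindrical (giving $\eta_\sharp$ in the sense of (ND1)) or satisfies $|\sff|^2-\tfrac{1}{n-1}|\mn|^2\le -\eta_\sharp|\mn|^2$, which by integration of the gradient estimate forces a local convex cap region. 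In the former case the hypersurface detection Lemma~\ref{lem:HDL} shows the region is $(\varepsilon,k)$-almost a hypersurface (in the tangent space of a chosen point, via the exponential map), and then Lemma~\ref{lem:NDL} upgrades it to a genuine $(\varepsilon,k,L)$-neck in the codimension one sense after composing with a small perturbation to the nearest hypersurface (as per \cite[\S 3]{Nguyen2020}).

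Next, perform standard $(\tau,B)$-surgery on the middle third of a maximal disjoint collection of such necks chosen at scale $H_1$ (following the selection algorithm of \cite[\S 7]{HuSi09}, adapted to the almost-hypersurface setting in \cite[\S 5]{Nguyen2020}), and discard every connected component of the resulting submanifold which is diffeomorphic to $S^n$ or $S^1\times S^{n-1}$. The key point, guaranteed by the neck detection + hypersurface detection above and the dichotomy cylindrical/convex, is that the unaffected part has $|\mn|<H_2\sqrt K$ pointwise, so the post-surgery solution sits in $\mathcal{C}^{n,\ell}_K(\alpha,V,\Theta')$ for an explicit $\Theta'=\Theta'(n,\ell,\alpha,V,\Theta,H_2,H_3)$. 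Restart the flow from the resulting initial datum and iterate.

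The main obstacle is termination: we must show only finitely many surgery times occur. For this we note two facts, both carried over unchanged from the hypersurface setting. (a) Each surgery occurs at curvature scale comparable to $H_1\sqrt K$, so every standard surgery excises a region of volume at least $\kappa K^{-n/2}$ for some $\kappa=\kappa(n,\ell,\alpha,V,\Theta,H_1)>0$, while the area formula $\frac{d}{dt}\mu=-|\mn|^2\mu\le 0$ shows total volume is nonincreasing between surgeries; hence only finitely many surgeries can occur before either a finite extinction time or the area reaches zero. (b) Between surgery times, Proposition~\ref{prop:class C universal interior estimates} guarantees a definite lower bound $\delta=\delta(n,\alpha,\Theta')>0$ on the time elapsed before $|\mn|$ can increase from $H_2\sqrt K$ back up to $H_3\sqrt K$. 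Combining (a) and (b), the process terminates after finitely many steps; at the final time $T_N$ either all remaining components have curvature bounded so the flow shrinks to round points, or every surviving component is diffeomorphic to $S^n$ or $S^1\times S^{n-1}$ after one further round of surgeries, which is the definition of termination.
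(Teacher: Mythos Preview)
Your approach is the same as the paper's: invoke the Huisken--Sinestrari/Nguyen surgery machinery once the a priori estimates of Section~\ref{sec:key estimates surgery} are in place. Two points in your sketch are not quite right, and the paper handles them explicitly.

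First, the neck continuation step cannot simply be quoted from \cite{HuSi09} or \cite{Nguyen2020}, since both are set in Euclidean space. In a curved ambient the argument that a curvature neck extends to a geometric neck (the analogue of \cite[Theorem 8.2]{HuSi09}) requires nontrivial modification; the paper defers to \cite[\S 8]{BrHu17} for this. Your sketch glosses over this by citing the Euclidean references directly.

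Second, your termination argument is muddled at the final step. The volume argument correctly bounds the number of surgeries, but your conclusion ``all remaining components have curvature bounded so the flow shrinks to round points'' is false in the sphere: a component with bounded curvature may exist for all time and converge to a totally geodesic subsphere rather than contract. The paper resolves this with Proposition~\ref{prop:cylindrical decay} (not the cylindrical estimate Proposition~\ref{prop:cylindrical estimate}, which you cite): that estimate shows any quadratically pinched flow satisfies $\vert\sff\vert^2-\tfrac{1}{n-1}\vert\mn\vert^2-2K<0$ after a fixed time $\tau K^{-1}$, so if the surgically modified flow survives past $\tau K^{-1}$ it has entered the Andrews--Baker regime \cite{BakerThesis}, where each component is already known to be $S^n$ and the flow may be terminated (cf.\ \cite{LangfordNguyen}). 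This replaces your informal dichotomy with a clean finite-time stopping criterion.
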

\begin{proof}
Given the codimension, cylindrical and derivative estimates, and the neck and hypersurface detection lemmas, and a sufficiently small choice of the surgery scale $r$, we can proceed as in \cite[Section 6]{Nguyen2020} (cf. \cite[Section 8]{HuSi09}) using the machinery developed in \cite[Section 3]{Nguyen2020} (cf. \cite[Sections 3 and 7]{HuSi09}), with only minor modifications required (cf. \cite{LangfordNguyen}). These are:

1. In order to reconcile our data $\mathcal{C}_K^n(\alpha,V,\Theta)$ with those of \cite{Nguyen2020}, we replace the parameter $K$ by introducing the scale factor $R\doteqdot 1/\sqrt{\Theta K}$. Our data $\alpha$ and $V$ can then be related to the $\alpha_0$ and $\alpha_2$ there, respectively. The constant $\alpha_1$ which appears in \cite{Nguyen2020} is not needed here. Since the surgery scale may be taken as small as needed, we may then choose the surgery parameters (albeit with slightly worse values) as explained in \cite[Section 6]{Nguyen2020}.

2. Since our ambient space is non-Euclidean, the proof of the neck continuation theorem \cite[Theorem 6.3]{Nguyen2020} requires modification in two places. These are explained and carried out in detail in \cite[Section 8]{BrHu17}. 

3. Since the maximal time is not a priori bounded in the present setting, the surgery algorithm may not terminate ``on its own''. This case is easily dealt with using 
Proposition \ref{prop:cylindrical decay} as in \cite{LangfordNguyen}, however.
\end{proof}

Theorem \ref{thm:main theorem} follows.

\end{document}